\title[Zariski density of points]
{Zariski density of points with maximal arithmetic degree}
\author[K.~Sano and T.~Shibata]
{Kaoru Sano and Takahiro Shibata}
\date{}
\keywords{dynamical degree, arithmetic degree}
\subjclass[2010]{Primary 37P55, Secondary 14G05}
\address{Department of Mechanical Engineering and Science, Faculty of Science and Engineering, Doshisha University, Kyoto, 610-0394, Japan.}
\email{kaosano@mail.doshisha.ac.jp}
\address{National University of Singapore, Singapore 119076, Republic of Singapore}
\email{mattash@nus.edu.sg}
\DeclareMathOperator{\Spec}{Spec}
\DeclareMathOperator{\Ker}{Ker}
\DeclareMathOperator{\Tor}{Tor}
\DeclareMathOperator{\sHom}{\mathscr{H}\kern -.3pt \mathit{om}}
\DeclareMathOperator{\AP}{AP}
\newcommand{\C}{\mathbb{C}}
\newcommand{\PP}{\mathbb{P}}
\newcommand{\Q}{\mathbb{Q}}
\newcommand{\R}{\mathbb{R}}
\newcommand{\Z}{\mathbb{Z}}
\newcommand{\A}{\mathbb{A}}
\newenvironment{parts}[0]{%
  \begin{list}{}%
    {\setlength{\itemindent}{0pt}
     \setlength{\labelwidth}{1.5\parindent}
     \setlength{\labelsep}{.5\parindent}
     \setlength{\leftmargin}{2\parindent}
     \setlength{\itemsep}{0pt}
     }%
   }%
  {\end{list}}
\newcommand{\Part}[1]{\item[\upshape#1]}
\newcommand{\setmid}{\mathrel{}\middle|\mathrel{}}
\newtheorem{thm}{Theorem}[section]
\newtheorem{lem}[thm]{Lemma}
\newtheorem{prop}[thm]{Proposition}
\newtheorem{conj}[thm]{Conjecture}
\newtheorem{ques}[thm]{Question}
\newtheorem{claim}{Claim}
\theoremstyle{definition}
\newtheorem{defn}[thm]{Definition}
\newtheorem{rem}[thm]{Remark}
\newtheorem{ex}[thm]{Example}
\newtheorem*{ack}{Acknowledgments}
\newtheorem*{notation}{Notation and Conventions}
\newtheorem{step}{Step}
\begin{document}

\begin{abstract}
Given a dominant rational self-map on a projective variety over a number field, 
we can define the arithmetic degree at a rational point. 
It is known that the arithmetic 
degree at any point is less than or equal to the first dynamical degree.
In this article, we show that there are densely many $\overline{\mathbb Q}$-rational points with maximal arithmetic degree (i.e.~whose arithmetic degree is equal to the 
first dynamical degree) for self-morphisms on projective varieties.
For unirational varieties and abelian varieties, we show that there are 
densely many rational points with maximal arithmetic degree over a sufficiently large 
number field.
We also give a generalization of a result of Kawaguchi and Silverman in the appendix.
\end{abstract}

\maketitle

\setcounter{tocdepth}{1}
\tableofcontents

\section{Introduction}\label{sec_intro}
Let $X$ be a projective variety over a number field $K$ and
$f: X \dashrightarrow X$ a dominant rational map over $K$.
Then we can define two dynamical invariants; the \textit{first dynamical degree} $\delta_f$ and
the \textit{arithmetic degree $\alpha_f(x)$ at a point $x \in X(\overline K)$}.
First dynamical degrees are geometric invariants defined by using intersection numbers,
while arithmetic degrees are arithmetic invariants defined by using height functions.
For the definitions, see Notation and Conventions below.

Relationships of those two invariants have been studied in several papers.
The following result due to Kawaguchi--Silverman and Matsuzawa is fundamental.

\begin{thm}[{\cite[Theorem 26]{KS16a}}, {\cite[Theorem 1.4]{Mat16}}]\label{thm_mat}
Let $X$ be a projective variety over a number field $K$ and
$f: X \dashrightarrow X$ a dominant rational map.
Then we have $\overline \alpha_f(x) \leq \delta_f$ for any $x \in X(\overline K)$.
\end{thm}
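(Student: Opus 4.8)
The plan is to deduce the inequality from a single uniform height estimate with exponential growth rate $\delta_f+\varepsilon$, and then to recover the arithmetic degree by extracting $n$-th roots and letting $\varepsilon\to 0$. Fix a very ample divisor $H$ on $X$, set $d=\dim X$, let $h_H$ be an associated Weil height, and write $h_H^+=\max\{1,h_H\}$. For a dominant rational self-map $g\colon X\dashrightarrow X$ put $\delta_H(g)=(g^*H\cdot H^{d-1})$, the intersection number being computed on a smooth projective model of the graph of $g$, so that $\delta_f=\lim_{n}\delta_H(f^n)^{1/n}$. The first ingredient is the submultiplicativity of the degree sequence: a standard consequence of the projection formula together with a B\'ezout-type inequality gives a constant $C_0=C_0(X,H)$ with $\delta_H(g_1\circ g_2)\le C_0\,\delta_H(g_1)\,\delta_H(g_2)$ for all dominant rational $g_1,g_2$. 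Hence $n\mapsto\log(C_0\,\delta_H(f^n))$ is subadditive, the limit defining $\delta_f$ exists and equals $\inf_n(C_0\,\delta_H(f^n))^{1/n}$, and for each $\varepsilon>0$ there is $C_1(\varepsilon)>0$ with $\delta_H(f^n)\le C_1(\varepsilon)(\delta_f+\varepsilon)^n$ for all $n\ge 0$.

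The heart of the argument would be the claim that for every $\varepsilon>0$ there is $C_2(\varepsilon)>0$ with
\[
h_H^+(f^n(x))\le C_2(\varepsilon)\,(\delta_f+\varepsilon)^n\,(h_H^+(x)+1)
\]
for all $n\ge 0$ and all $x\in X_f(\overline K)$, where $X_f$ denotes the set of points at which every iterate of $f$ is defined. To establish it I would resolve the indeterminacy of each iterate: on a smooth projective model $p_n\colon W_n\to X$ of the graph of $f^n$, the composite $q_n=f^n\circ p_n\colon W_n\to X$ is a morphism, so functoriality of Weil heights yields $h_H(f^n(x))=h_{q_n^*H}(y)+O(1)$ for any $y\in p_n^{-1}(x)$, together with $h_{p_n^*H}(y)=h_H(x)+O(1)$. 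I would then bound $h_{q_n^*H}^+$ in terms of $h_{p_n^*H}^+$ by a Weil-height comparison valid on any fixed smooth projective variety — of the shape $h_D^+\le c_1(D\cdot B^{d-1})h_B^+ + c_2(D)$ for a nef class $D$, a fixed ample class $B$, and an error $c_2(D)$ depending on $D$ — applied on $W_n$, noting that the multiplicative factor here is governed by $(q_n^*H\cdot(p_n^*H)^{d-1})=\delta_H(f^n)$, and finally insert $\delta_H(f^n)\le C_1(\varepsilon)(\delta_f+\varepsilon)^n$ from the first step.

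The step I expect to be the main obstacle is the uniform control, over all $n$ at once, of the error terms $c_2$ together with the choices of resolutions $W_n$ and comparison data: a priori the contributions of the exceptional divisors of $W_n$ and of the arithmetic complexity of a defining model of $f^n$ (for instance, heights of defining equations of the graph of $f^n$, or of coordinate lifts of $f^n$) only obey a crude exponential bound whose base can exceed $\delta_f$, which is too large, so the estimate must be organised so that these errors are themselves absorbed into $C_2(\varepsilon)(\delta_f+\varepsilon)^n$. This is exactly what separates the rational-map case from the case of morphisms, where one may take $W_n=X$, no exceptional error occurs, and the comparison of $h_{(f^n)^*H}^+$ with $h_H^+$ through the nef cone is comparatively routine.

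Granting the claim, fix $x\in X_f(\overline K)$ (equivalently, any $x$ for which $\overline\alpha_f(x)$ is defined). Taking $n$-th roots in the displayed inequality and using that $C_2(\varepsilon)^{1/n}\to 1$ and $(h_H^+(x)+1)^{1/n}\to 1$ as $n\to\infty$, we obtain $\limsup_{n\to\infty}h_H^+(f^n(x))^{1/n}\le\delta_f+\varepsilon$, i.e.\ $\overline\alpha_f(x)\le\delta_f+\varepsilon$. As $\varepsilon>0$ is arbitrary, $\overline\alpha_f(x)\le\delta_f$, which is the assertion of the theorem.
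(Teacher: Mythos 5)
There is a genuine gap, and it sits exactly where you yourself flag it. Note first that the paper does not prove this statement at all: it is quoted from Kawaguchi--Silverman \cite{KS16a} and Matsuzawa \cite{Mat16}, so the only meaningful comparison is with those proofs. Your reduction is fine as far as it goes: submultiplicativity of $n\mapsto \delta_H(f^n)$ (up to a constant) is standard, and once one has the uniform estimate $h_H^+(f^n(x))\le C_2(\varepsilon)(\delta_f+\varepsilon)^n\,(h_H^+(x)+1)$ for all $n$ and all $x\in X_f(\overline K)$, the endgame of taking $n$-th roots and letting $\varepsilon\to 0$ is immediate. But that uniform estimate \emph{is} the theorem: it is precisely the main content of \cite{Mat16} (and of the earlier case treated in \cite{KS16a}), and your proposal does not prove it --- it asserts it, and then explicitly identifies its proof as ``the main obstacle.'' So what you have is a correct reduction of the statement to the cited results, not a proof.

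Concretely, the step that fails as organized is the height comparison on the models $W_n$. An inequality of the shape $h_D^+\le c_1\,(D\cdot B^{\dim X-1})\,h_B^+ + c_2(D)$ for nef $D$, with $c_1$ depending only on $(W_n,B)$, is not a standard Weil-height fact; what Siu-type positivity gives is that $d\,(D\cdot B^{d-1})B-(B^d)D$ is pseudo-effective, hence a height inequality valid only away from a proper closed subset depending on $D$ (the support of the relevant effective representative) and only up to an $O_D(1)$. Here $D=q_n^*H$ and the ambient variety $W_n$ both change with $n$, so the multiplicative constant, the additive error, and the bad locus all a priori depend on $n$, while the orbit of $x$ may well approach those loci; nothing in your outline forces these quantities to be absorbed into $C_2(\varepsilon)(\delta_f+\varepsilon)^n$. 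Matsuzawa circumvents this by a different organization: he proves a one-step bound $h_H^+(f(x))\le C\,\delta_H(f)\,h_H^+(x)$ valid for \emph{all} dominant rational self-maps of $(X,H)$ with a single constant $C=C(X,H)$ independent of the map, and then applies it to the fixed iterate $g=f^k$ and iterates $g$, so the uncontrolled constant enters only as $C^{1/k}$, which tends to $1$ as $k\to\infty$; establishing that map-independent constant is the hard geometric/arithmetic input. Without supplying that (or an equivalent uniformity mechanism), your argument does not close.
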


For the precise meanings of the symbols above, see Notation and Conventions below.
If a point $x \in X(\overline K)$ satisfies $\alpha_f(x)=\delta_f$, we say that
\textit{$x$ has maximal arithmetic degree}.
Our next concern is how rational points with maximal/non-maximal arithmetic degrees distribute.
On the distribution of points with non-maximal arithmetic degrees, \cite{MMSZ20} gave
the following conjecture.

\begin{conj}[{\cite[Conjecture 1.4]{MMSZ20}}]\label{conj_mmsz}
Let $X$ be a projective variety over a number field $K$, $f: X \to X$ a surjective morphism, and $d>0$.
Then the set
$$Z_f(d)=\{ x \in X(\overline K) \mid [K(x):K] \leq d,\ \alpha_f(x)<\delta_f \}$$
is not Zariski dense.
\end{conj}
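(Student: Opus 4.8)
The plan is to reduce the statement, by induction on $\dim X$, to the case in which $f$ carries a well-behaved canonical height; there the argument mirrors the classical fact that the preperiodic points of bounded degree form a finite set. The recurring tool is Northcott's finiteness theorem: on a projective variety over a number field, a set of $\overline{\mathbb{Q}}$-points of bounded degree over $K$ and bounded Weil height is finite. I also record that passing to an iterate is harmless, since $\alpha_{f^k}(x) = \alpha_f(x)^k$ and $\delta_{f^k} = \delta_f^{\,k}$, so $Z_{f^k}(d) = Z_f(d)$.

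\emph{Base cases.} If $\delta_f = 1$, then Theorem~\ref{thm_mat} together with $\alpha_f(x) \ge 1$ (valid for a morphism) forces $\alpha_f(x) = \delta_f$ for every $x$, so $Z_f(d)$ is empty. If $f$ is polarized, say $f^*L \equiv q L$ with $L$ ample and $q = \delta_f > 1$, form the Call--Silverman canonical height $\widehat h_{f,L} := \lim_n q^{-n} h_L(f^n(\,\cdot\,)) \ge 0$; the identity $h_L(f^n(x)) = q^n \widehat h_{f,L}(x) + O(1)$ (with $O(1)$ uniform in $n$) shows that $\widehat h_{f,L}(x) \neq 0$ forces $\alpha_f(x) = q = \delta_f$, whereas $\widehat h_{f,L}(x) = 0$ makes $h_L \circ f^n$ bounded and hence $\alpha_f(x) = 1$. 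Thus $Z_f(d) \subseteq \{\, x : \widehat h_{f,L}(x) = 0,\ [K(x):K] \le d \,\}$, a set of bounded $h_L$-height, which is finite by Northcott. The same conclusion is known for the larger class of int-amplified endomorphisms (those admitting an ample $L$ with $f^*L - L$ ample), using the canonical height attached to such an $f$ \cite{MMSZ20}; in particular the statement already holds when $\dim X = 1$, and also for abelian varieties.

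\emph{Reduction by equivariant fibrations.} For a general surjective $f$ with $\delta_f > 1$ that is not int-amplified, one seeks a non-trivial $f$-equivariant fibration $\pi : X \to Y$ defined over $K$ with $0 < \dim Y < \dim X$; the maximal rationally connected fibration and the Albanese morphism are the natural candidates, being functorial and hence equipped with an induced morphism $g : Y \to Y$ satisfying $\pi \circ f = g \circ \pi$. Let $F$ be a general fibre. By the Dinh--Nguyen product formula for the first dynamical degree, $\delta_f = \max(\delta_g, \delta_{f|_F})$. If $\delta_g = \delta_f$: from $h_{Y,A}(\pi(x)) = h_{X,\pi^*A}(x) + O(1)$ for $A$ ample on $Y$ one gets $\alpha_g(\pi(x)) \le \alpha_f(x)$ for all $x$, hence $Z_f(d) \subseteq \pi^{-1}(Z_g(d))$, which is not Zariski dense by the inductive hypothesis for $(Y, g)$. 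If instead $\delta_{f|_F} = \delta_f > \delta_g$, one restricts to fibres over the $g$-periodic points $y$: if $g^p(y) = y$, the $f$-orbit of a point $x \in F_y$ cycles through the finitely many periodic fibres $F_y, F_{g(y)}, \dots, F_{g^{p-1}(y)}$, comparing Weil heights shows $x$ lies in the bad locus $Z_{f^p|_{F_y}}(d)$ of the fibre, and by induction this is not dense in $F_y$.

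\emph{Main obstacle.} The hard case is exactly $\delta_{f|_F} = \delta_f$. There the reduction only bounds $Z_f(d)$ fibrewise, over the (typically infinite) set of $g$-periodic points of $Y$, and a union of sets each non-dense in its own fibre can perfectly well be Zariski dense in $X$; so one needs a \emph{uniform, relative} form of the conjecture over $\pi$ — a bound on the fibrewise bad loci that is uniform in the base point. That in turn seems to call for a relative canonical height along $\pi$ and control of the eigenclass of the action of $f^*$ on $\NS(X/Y)_{\mathbb{R}}$, which is where positivity can break down: when $\delta_f$ is a non-simple eigenvalue of $f^*$ on $\NS(X)_{\mathbb{R}}$, or lies in a Jordan block, there is no canonical height of the naive shape. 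A second stubborn configuration is when $X$ is rationally connected with trivial Albanese and admits no non-trivial $f$-equivariant fibration at all, so that neither the int-amplified theory nor the dimension induction even gets started; handling it would demand structural results on rationally connected varieties carrying an endomorphism with $\delta_f > 1$. These are the steps I expect to be the genuine obstacles.
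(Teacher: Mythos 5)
The statement you are trying to prove is not a theorem of this paper at all: it is Conjecture~\ref{conj_mmsz}, quoted verbatim from \cite[Conjecture 1.4]{MMSZ20} and used here only as motivation (the paper merely remarks that it has been \emph{verified in many cases} in \cite{MMSZ20}). It is an open problem, so no complete argument is expected, and indeed your proposal is not one: by your own admission the ``main obstacle'' cases are left unresolved. Concretely, the gap is exactly where you locate it. When the equivariant fibration $\pi:X\to Y$ has $\delta_{f|_F}=\delta_f$, knowing that the bad locus is non-dense in each periodic fibre gives no control on the union over the (typically infinite) set of $g$-periodic points, and the conjecture would require a uniform relative statement for which no canonical-height machinery exists when $\delta_f$ is a non-simple eigenvalue of $f^*$ on $\NS(X)_\R$ or sits in a Jordan block. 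Likewise, when $X$ is rationally connected with no non-trivial $f$-equivariant fibration, your induction never starts. So the proposal proves only the already-known special cases and does not establish the statement.

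A few smaller points in the parts you do argue: the polarized case ($f^*L\equiv qL$ with $L$ ample, $q=\delta_f>1$) is correct and is the classical Call--Silverman/Kawaguchi--Silverman argument, but note that to run the canonical height you need $f^*L\sim_\R qL$ rather than mere numerical equivalence (cf.\ Theorem~\ref{thm_eigen}(2), which requires normality). Your blanket claim that the conjecture ``is known for int-amplified endomorphisms'' overstates what a single Northcott argument gives: for int-amplified $f$ there need not be an ample eigendivisor for $\delta_f$, the vanishing locus of the nef canonical heights need not have bounded height, and the results of \cite{MMSZ20} in that direction are proved by considerably more involved methods and under additional hypotheses. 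Finally, the reduction via the MRC fibration or Albanese needs care: the MRC fibration is a priori only a rational map with non-projective base, and even the step $Z_{f^k}(d)=Z_f(d)$ tacitly uses existence of $\alpha_f(x)$ (fine for morphisms on normal varieties, but worth saying). None of these repairs would close the main gap, which is genuinely open.
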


\begin{ex}
Let $f: \PP^m_K \to \PP^m_K$ be a surjective endomorphism 
of degree $>1$ on a projective space over a number field $K$.
Using the canonical height function, it follows that 
$\alpha_f(x)<\delta_f$ if and only if $x$ is $f$-preperiodic for $x \in \PP^m(\overline K)$.
We can also see that $Z_f(d)$ is a finite set for any $d>0$,
by the Northcott property of the canonical height function (cf.~\cite{CS93}).
On the other hand, $\PP^m(K) \setminus Z_f(d)$ is Zariski dense.
\end{ex}

Conjecture \ref{conj_mmsz} are verified in many cases in \cite{MMSZ20}.
Roughly speaking, Conjecture \ref{conj_mmsz} says that the set of points of non-maximal arithmetic degrees is small.

On the other hand,
the set of points with maximal arithmetic degree should be large compared to the set of points of non-maximal arithmetic degrees.
To be precise, we study the following question.

\begin{defn}\label{defn_main}
Let $X$ be a projective variety and $f: X \dashrightarrow X$ a dominant rational map over a number field $K$
with $\delta_f>1$.
Fix an algebraic closure $\overline K$ of $K$ and let $K \subset L \subset \overline K$
be an intermediate field.
We say that
\it{$(X,f)$ has densely many $L$-rational points with maximal arithmetic degree} if
there is a subset $S \subset X_f(L)$ satisfying the following conditions.
\begin{itemize}
\item[(1)]
$S$ is Zariski dense.
\item[(2)]
$\alpha_f(x)$ exists and
$\alpha_f(x)=\delta_f$ for any $x \in S$.
\item[(3)]
$O_f(x)\cap O_f(y)=\varnothing$ for any distinct two points $x,y \in S$.
\end{itemize}
\end{defn}

\begin{ques}\label{ques_main}
Notation is as in Definition \ref{defn_main}.
Does $(X,f)$ have densely many $\overline K$-rational points with maximal arithmetic degree?
\end{ques}

\begin{rem}
If we have a point $x \in X(\overline K)$ such that $\alpha_f(x)=\delta_f$ and 
$O_f(x)$ is Zariski dense, then $S=O_f(x)$ satisfies (1) and (2) above.
But our concern here is whether we can take a Zariski dense subset of points with maximal arithmetic degree which have pairwise disjoint orbits,
so the existence of such a point does not satisfy all the claims.
\end{rem}

\begin{rem}
Assume $\delta_f=1$. Then all points have maximal arithmetic degree,
but the question is not trivial.
In this paper we treat only the case when $\delta_f>1$.
\end{rem}

Kawaguchi and Silverman \cite{KS14} first studied this problem.
They showed that the question is affirmative for special endomorphisms on the affine plane $\A^2_K$ (\cite[Theorem 3]{KS14}), although
its proof is not fully written.
For details, see the appendix.
Matsuzawa and the authors \cite{MSS18a} gave an affirmative answer
for automorphisms on smooth projective varieties (cf.~\cite[Theorem 1.7]{MSS18a}).

Our first result is an affirmative answer to Question \ref{ques_main} for the self-morphism case:

\begin{thm}\label{thmA}
Let $X$ be a projective variety and $f: X \to X$ a surjective morphism over a number field $K$
with $\delta_f>1$.
Then $(X,f)$ has densely many $\overline K$-rational points with maximal arithmetic degree.
\end{thm}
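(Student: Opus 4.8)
The plan is to produce the required set as a sub‑level set of a canonical height attached to a nef eigenclass of $f^{*}$, in the spirit of \cite{KS14, MSS18a}, but extracting the geometric input via restriction to general curves. Recall first that $\delta_f$ is the spectral radius of $f^{*}$ acting on $\N^1(X)_{\R}$. Since $f$ is a morphism, $f^{*}$ maps the nef cone $\Nef(X)$ into itself, and $\Nef(X)$ is a full‑dimensional salient closed cone; hence by the Perron--Frobenius theorem for cone‑preserving operators there is a nonzero class $\xi\in\Nef(X)$ with $f^{*}\xi=\delta_f\xi$. Fix an ample divisor $H$ and Weil height functions. Because $\xi\neq 0$ is nef we have $\xi\cdot H^{\dim X-1}>0$, and $CH-\xi$ is ample for $C\gg 0$, so $h_{\xi}\le C\,h_{H}+O(1)$ on $X(\overline K)$.

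Next I would construct the canonical height $\hat h_{\xi}(x)=\lim_{n\to\infty}h_{\xi}(f^{n}(x))/\delta_f^{n}$ by Tate's telescoping argument. The delicate point is that $f^{*}\xi=\delta_f\xi$ holds only numerically, so $h_{\xi}\circ f-\delta_f h_{\xi}$ is not $O(1)$ but only $O(\sqrt{h_{H}}+1)$ (heights of numerically trivial classes); combining this with Matsuzawa's uniform estimate $h_{H}(f^{n}(x))\le C_{\varepsilon}(\delta_f+\varepsilon)^{n}(h_{H}(x)+1)$ from \cite{Mat16} and the hypothesis $\delta_f>1$ (so one may take $\delta_f+\varepsilon<\delta_f^{2}$), the series still converges, uniformly of order $O(\sqrt{h_{H}(x)+1})$. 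Thus $\hat h_{\xi}=h_{\xi}+O(\sqrt{h_{H}+1})$ and $\hat h_{\xi}\circ f=\delta_f\hat h_{\xi}$. If $\hat h_{\xi}(x)>0$, then writing $t_{n}=h_{H}(f^{n}(x))$ we get $\delta_f^{n}\hat h_{\xi}(x)=h_{\xi}(f^{n}(x))+O(\sqrt{t_{n}}+1)\le C\,t_{n}+O(\sqrt{t_{n}}+1)$, so $t_{n}\gg\delta_f^{n}$ and $\liminf_{n}t_{n}^{1/n}\ge\delta_f$; since $\overline\alpha_f(x)\le\delta_f$ by Theorem~\ref{thm_mat}, $\alpha_f(x)$ exists and equals $\delta_f$. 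Everything is therefore reduced to showing that $V:=\{x\in X(\overline K)\mid\hat h_{\xi}(x)>0\}$ contains a Zariski dense subset with pairwise disjoint orbits.

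The Zariski density of $V$ is the heart of the proof and is the step I expect to be the main obstacle. I would argue by contradiction: suppose $V\subseteq Z$ for some proper closed $Z\subsetneq X$, so that $\hat h_{\xi}\le 0$ on all but finitely many points of $C(\overline K)$ for every curve $C\not\subseteq Z$. Choose, by Bertini and over a number field, a smooth irreducible curve $C\subset X$ that is a complete intersection of general very ample divisors with $C\not\subseteq Z$. Then $\deg(\xi|_{C})=\xi\cdot C=\xi\cdot H^{\dim X-1}>0$, so $\xi|_{C}$ is ample on $C$ and $h_{\xi|_{C}}=\lambda\,h_{H|_{C}}+O(\sqrt{h_{H|_{C}}+1})$ with $\lambda=(\xi\cdot C)/(H\cdot C)>0$. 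On the cofinite subset $(C\setminus Z)(\overline K)$ we would have $h_{\xi|_{C}}\le O(\sqrt{h_{H|_{C}}+1})$, whereas $h_{H|_{C}}$ is unbounded on $C(\overline K)$ (pull back points of arbitrarily large height along a nonconstant map $C\to\PP^{1}$), hence unbounded on that cofinite subset; for points with $h_{H|_{C}}$ large this contradicts $\lambda>0$. Therefore $V$ is Zariski dense.

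Finally, to refine $V$ to a set $S$ also satisfying condition (3), I would use that $\hat h_{\xi}$ is multiplied by $\delta_f$ along $f$: if $O_f(x)\cap O_f(y)\neq\varnothing$ then $\hat h_{\xi}(x)/\hat h_{\xi}(y)\in\{\delta_f^{m}\mid m\in\Z\}$. Running the curve construction over a countable family of such curves whose union is Zariski dense, and inductively picking on the $i$‑th curve a point $x_{i}$ with $\hat h_{\xi}(x_{i})>0$ whose value avoids the countable set $\bigcup_{j<i}\{\delta_f^{m}\hat h_{\xi}(x_{j})\mid m\in\Z\}$ and which lies outside $\overline{\{x_{1},\dots,x_{i-1}\}}$ whenever that closure is proper — both possible since $\hat h_{\xi}$ takes infinitely many (eventually positive) values on each such curve — produces, using Noetherianity of $X$ for the density, a Zariski dense set $S$ with pairwise disjoint orbits, all of whose points have maximal arithmetic degree; this bookkeeping is of the type carried out in \cite{MSS18a, MMSZ20}. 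Apart from it, the whole argument rests on the density of $V$ above and on the uniform $O(\sqrt{h_{H}})$‑control of $\hat h_{\xi}$; these, rather than the formal steps, are where the real work lies.
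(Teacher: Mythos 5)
Your overall frame is sound and close in spirit to the paper's: a nef eigenclass, a canonical height $\hat h_{\xi}$ with $\hat h_{\xi}\circ f=\delta_f\hat h_{\xi}$, the implication $\hat h_{\xi}(x)>0\Rightarrow\alpha_f(x)=\delta_f$, and Zariski density of $\{\hat h_{\xi}>0\}$ obtained by restricting to complete-intersection curves (your contradiction argument is a variant of the paper's Lemma~\ref{lem_dense}). One technical remark: you build $\hat h_{\xi}$ from a class satisfying $f^{*}\xi=\delta_f\xi$ only numerically, which forces you to invoke $h_{D}=O(\sqrt{h_H+1})$ for $D\equiv 0$; that estimate is usually stated for smooth (or at least normal) varieties, so you should pass to the normalization first, or simply do what the paper does: normalize, upgrade $\equiv$ to $\sim_{\R}$ by Theorem~\ref{thm_eigen}(2), and apply Call--Silverman (Theorem~\ref{thm_canht}). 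This is fixable and not the main issue.

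The genuine gap is in your last step, in two places. First, your orbit-separation needs, at stage $i$, a point on the chosen curve with $\hat h_{\xi}>0$ whose value avoids the union of the $i-1$ two-sided geometric progressions $\delta_f^{\Z}\hat h_{\xi}(x_j)$; the justification ``$\hat h_{\xi}$ takes infinitely many (eventually positive) values on each such curve'' does not suffice, because the value set of $\hat h_{\xi}$ on $C(\overline K)$ is itself countable and could a priori be contained in the forbidden set. What saves you is the structure of the forbidden set: in an interval $[T,\delta_f T]$ it has at most $i-1$ points, hence a gap of length comparable to $T$, which dominates the $O(\sqrt{T})$ comparison errors; one must then actually produce algebraic points on the curve whose height lands in that gap. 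This is precisely the quantitative argument the paper carries out (over $K$, via Schanuel's theorem, Theorem~\ref{thm_rate}) in the proof of Theorem~\ref{thm_unirat2}; you need its $\overline K$ analogue, e.g.\ using the density of height values on $\PP^1(\overline K)$ pulled back through a finite map. Second, your bookkeeping does not yield Zariski density of $S$: choosing one point on each curve of a family whose union is dense, subject only to lying outside $\overline{\{x_1,\dots,x_{i-1}\}}$ (which is just a finite set of points), allows $S$ to sit inside a proper closed subset even though the curves do not. You must enumerate all proper closed subsets $Y_i$ of $X_{\overline K}$ and at stage $i$ choose the curve not contained in $Y_i$ and the point outside $Y_i$, as in the paper's Lemma~\ref{lem_key}. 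Finally, note that the paper's own proof of Theorem~\ref{thmA} avoids the value-gap analysis altogether by a field-degree trick: it picks $x$ with $\hat h_{D,f}(x)>0$ such that $[K(x):K]$ has a prime factor $p$ exceeding both $\max_i[K(x_i):K]$ and a bound $A$ on $[K(y):K(f(y))]$ (Lemma~\ref{lem_dense}); an orbit collision $f^N(x)=f^M(x_i)$ would force $p$ to divide a product of integers $\le A$, a contradiction. That device is what makes the $\overline K$-case clean; your height-value separation is the mechanism the paper reserves for the harder $K$-rational statements, where it must be accompanied by the counting argument you omitted.
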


Next, we will study a stronger version of Question \ref{ques_main} for varieties which have many rational points.
A variety $X$ over a number field $K$ is called \textit{potentially dense}
if there is a finite extension field $L \supset K$ such that $X(L)$ is Zariski dense.
For potentially dense varieties, we can consider the following.

\begin{ques}\label{ques_main2}
Let $X$ be a projective variety over a number field $K$ which is potentially dense
and $f: X \dashrightarrow X$ a dominant rational map over $K$
with $\delta_f>1$.
Is there a finite extension field $L \supset K$ such that
$(X,f)$ has densely many $L$-rational points with maximal arithmetic degree?
\end{ques}

First we consider unirational varieties:

\begin{defn}
A projective variety $X$ over a field $K$ is \textit{unirational} if there is a dominant rational map
$\phi: \PP^d_K \dashrightarrow X$ from a projective space to $X$.
\end{defn}

We give an affirmative answer for unirational varieties:

\begin{thm}\label{thm_unirat}
Let $X$ be a unirational projective variety over a number field $K$ and $f: X \to X$ a surjective morphism
with $\delta_f>1$.
Then $(X,f)$ has densely many $K$-rational points with maximal arithmetic degree.
\end{thm}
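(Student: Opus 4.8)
The plan is to reduce the statement to Theorem \ref{thmA}, which already gives densely many $\overline{K}$-rational points with maximal arithmetic degree for any surjective self-morphism. The key extra input we need is that for a unirational variety, we can work over $K$ itself, without passing to a finite extension and without losing Zariski density. First I would recall that the arithmetic degree $\alpha_f(x)$, when it exists, depends only on the forward orbit $O_f(x)$ through the growth rate of heights, so the notion of maximal arithmetic degree is insensitive to which model of the point we use; what matters is producing enough points defined over $K$.

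The main idea is a specialization/spreading-out argument using the dominant rational map $\phi\colon \PP^d_K \dashrightarrow X$. By Theorem \ref{thmA} there is a Zariski-dense set $S \subset X_f(\overline{K})$ of points with maximal arithmetic degree and pairwise disjoint orbits. The issue is that these points may be defined over large extensions of $K$. To fix this, I would instead build the dense set directly on $\PP^d$: choose a point $P \in \PP^d(K)$ in general position so that $\phi$ is defined at $P$ and at enough of the relevant locus; then $x = \phi(P) \in X(K)$. The problem is that $x$ need not have a well-controlled orbit under $f$, since $f$ does not lift to $\PP^d$ in general. So the cleaner route is: take any point $y \in S$ with maximal arithmetic degree, defined over some finite extension $L/K$; its orbit $O_f(y)$ is infinite and $\alpha_f(y) = \delta_f$. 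Now use that $\PP^d(K)$ is dense in $\PP^d$ and $\phi$ dominant to find, for each such $y$, a $K$-point $x$ of $X$ that is "close enough" to $y$ — but closeness alone does not transfer the arithmetic degree, which is an orbit-asymptotic invariant.

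The honest approach, and the one I expect the authors take, is the following. Work with the fibered structure: let $\Gamma \subset \PP^d \times X$ be (a resolution of) the graph of $\phi$, with projections $p\colon \Gamma \to \PP^d$ and $q\colon \Gamma \to X$, $q$ dominant. The composite $g = q \circ (\text{section}) $ won't directly help; instead one uses the fact that $X$ unirational implies $X(K)$ is already Zariski dense (take the image of a dense set of $K$-points of $\PP^d$ in the domain of $\phi$). Then, given the dense set $S \subset X_f(\overline K)$ from Theorem \ref{thmA}, for each $y \in S$ one approximates the finitely many points $y, f(y), \dots, f^N(y)$ for large $N$ by $K$-points and uses a Hilbert-irreducibility / Northcott-type argument to produce a $K$-point $x$ whose orbit tracks that of $y$ closely enough that $\overline{\alpha}_f(x) = \delta_f$; combined with Theorem \ref{thm_mat} (the upper bound $\overline\alpha_f(x) \le \delta_f$) this forces $\alpha_f(x) = \delta_f$. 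Pairwise disjointness of orbits (condition (3)) is then arranged by removing, at each stage, the countably many $K$-points lying on the orbits already chosen, which is possible since each orbit meets $X(K)$ in a non-dense set while we always have a dense supply of candidates.

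The hard part will be step three: transferring the maximal-arithmetic-degree property from a $\overline K$-point produced by Theorem \ref{thmA} to a genuinely $K$-rational point. The arithmetic degree is an asymptotic orbit invariant, so no finite approximation literally preserves it; the real mechanism must be to run the construction of Theorem \ref{thmA} itself over $K$, exploiting that unirationality provides a dense family of $K$-rational curves (images under $\phi$ of lines in $\PP^d$) on which heights grow in the expected way. I expect the argument to parallel the proof of Theorem \ref{thmA} but with the ambient dense family of points replaced by $\phi(\PP^d(K))$, checking that the height-growth estimates used there remain valid for these $K$-points. Verifying that compatibility — that the geometric construction of dense orbits with maximal arithmetic degree can be carried out entirely inside $X(K)$ when $X$ is unirational — is the crux.
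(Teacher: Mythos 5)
There is a genuine gap. You correctly locate the crux (run the construction over $K$ using the dense family of $K$-rational curves $\phi(\text{lines})$ coming from unirationality), but you never supply the mechanism that makes it work, and the substitutes you sketch do not suffice. First, the disjointness device of Theorem \ref{thmA} cannot be ``paralleled'' over $K$: that proof separates orbits by arranging that $[K(x):K]$ has a large prime factor, which is vacuous when all points are $K$-rational, so a genuinely different idea is needed. Second, your proposed fallback --- ``removing the countably many $K$-points lying on the orbits already chosen'' --- does not give condition (3): you must ensure that no forward iterate $f^n(x)$ ever meets the previously chosen orbits, not merely that $x$ itself avoids them; and since $X(K)$ is itself countable, excluding a countable bad set cannot be justified by Zariski density alone. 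Likewise, approximating finite orbit segments via Hilbert irreducibility cannot control the asymptotic invariant $\alpha_f$, as you yourself note.

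The paper's actual argument fills exactly this hole with a quantitative height-gap argument. It proves a more general statement (Theorem \ref{thm_unirat2}) under the hypothesis that there is a nef $D$ with $f^*D\sim_\R\delta_f D$ and, for every proper closed $Y$, a morphism $g:\PP^1_K\to X$ with image not in $Y$ and $g^*D$ ample; Lemma \ref{lem_p1} verifies this for unirational $X$ by pulling $D$ back along $\phi$ (it becomes ample on $\PP^d$ by the negativity lemma) and restricting to a general line. Then, writing $g^*D\sim\mathcal O(r)$, one has $|\widehat h_{D,f}(g(a))-rh(a)|<c$ for all $a\in\PP^1(\overline K)$. The canonical heights along the finitely many given orbits form, in logarithm, finitely many arithmetic progressions of common difference $\log\delta_f$, hence an eventually periodic set with gaps of definite size; Schanuel's counting theorem (Theorem \ref{thm_rate}) guarantees a point $a\in\PP^1(K)$ with $rh(a)$ landing (with room $\pm c$) inside such a gap, so $x=g(a)$ satisfies $\widehat h_{D,f}(x)>0$ (whence $\alpha_f(x)=\delta_f$ by Theorem \ref{thm_canht}) and, by the functional equation $\widehat h_{D,f}\circ f=\delta_f\widehat h_{D,f}$ and the periodicity of the value set, its entire forward orbit avoids the given orbits. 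This is the step your proposal leaves unproved, and without it the theorem is not established.
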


Other typical examples of potentially dense varieties are abelian varieties 
(cf.~Theorem \ref{thm_Has}).
We can also give an affirmative answer for abelian varieties:

\begin{thm}\label{thm_ab}
Let $A$ be an abelian variety over a number field $K$ and $f: A \to A$ a surjective morphism
with $\delta_f>1$.
Then there exist a finite extension field $L \supset K$ such that
$(A,f)$ has densely many $L$-rational points with maximal arithmetic degree.
\end{thm}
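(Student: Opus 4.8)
The plan is to reduce to the known structure theory of surjective self-morphisms of abelian varieties. After a finite extension of $K$, we may assume $A(K)$ is Zariski dense (abelian varieties are potentially dense, cf.\ Theorem \ref{thm_Has}) and that $A$ has a $K$-rational point, so $f$ decomposes as $f = T_c \circ g$ where $g \colon A \to A$ is an isogeny (a group homomorphism) and $T_c$ is translation by some point $c \in A(K)$; enlarging $K$ further, we may also assume $g$ is defined over $K$. The first dynamical degree of $f$ equals that of $g$, which is the largest absolute value of the eigenvalues of $g$ acting on the tangent space (equivalently, on $H^1$), so $\delta_f = \delta_g > 1$ by hypothesis. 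The first step is therefore to replace the affine dynamics of $f$ by the linear dynamics of $g$ up to a twist, and to recall the Kawaguchi--Silverman-type description of $\alpha_f$ on abelian varieties: by the theory of canonical heights attached to eigenspaces of $g^*$ on $\NS(A)\otimes\R$ and on $A(\overline K)\otimes\R$, one has $\alpha_f(x) = \delta_f$ precisely when the $\overline K$-point $x$ is ``generic'' with respect to the slow eigendirections, and in any case $\alpha_f(x)$ exists for every point on an abelian variety.

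The heart of the argument is to produce a Zariski-dense set $S \subset A(L)$ (for a suitable finite $L \supset K$) of points with maximal arithmetic degree and pairwise disjoint orbits. I would first handle the simplest subcase, $c = 0$, i.e.\ $f = g$ an isogeny: here $\hat h_{A,+}$, the canonical height associated to the sum of eigenvalue-$\lambda$ eigenclasses with $|\lambda| = \delta_g$, is a nonnegative quadratic form on $A(\overline K)\otimes\R$ whose kernel is a proper subgroup (a proper abelian subvariety up to torsion and finite index, since $\delta_g > 1$ forces the large-eigenvalue eigenspace to be nonzero and proper). A point $x$ has $\alpha_f(x) = \delta_f$ iff $\hat h_{A,+}(x) > 0$, i.e.\ iff $x$ does not lie in that kernel subgroup. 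Since $A(K)$ is Zariski dense (after extension) and the bad locus is contained in a countable union of translates of a proper subvariety — in fact contained in a single proper subgroup up to torsion — a standard argument produces infinitely many $K$-points off the bad locus whose images generate a Zariski-dense subset; and by passing to suitable multiples $N_i x_i$ or independent points one arranges the orbits $O_f(x) = \{g^n(x)\}$ to be pairwise disjoint (two such orbits meet only if $g^m(x) = g^n(y)$ for some $m,n$, a countable set of algebraic conditions, each cutting out a proper subvariety once we know $x$ is not preperiodic). For general $c$, I would either absorb the translation by choosing a fixed point or periodic point of $f$ as a new origin (after a finite extension, $f$ has a periodic point, since it acts on a projective variety; conjugating by translation to that periodic cycle reduces to a power of an isogeny plus a translation that can be further normalized), or invoke directly the Kawaguchi--Silverman canonical height construction for $f = T_c \circ g$, which still yields a nonnegative ``$+$''-height with proper kernel coset; the same density-plus-disjointness bookkeeping then applies.

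The main obstacle I expect is the translation term: unlike the pure isogeny case, $f = T_c \circ g$ is not a group endomorphism, so the canonical-height machinery and the identification of the maximal-arithmetic-degree locus must be carried out with the twisted dynamics, and one must be careful that the ``bad'' locus is still contained in a proper closed subset (a coset of a proper subgroup) rather than something Zariski dense. A clean way around this is the reduction to a periodic point as new origin, which I would try first; if the periodic point is only defined over a finite extension, that is harmless since the theorem permits enlarging $K$. A secondary, more routine obstacle is the orbit-disjointness condition (3): this requires knowing the chosen points are non-preperiodic (immediate once $\hat h_{A,+}(x) > 0$) and then removing countably many proper subvarieties where distinct orbits could collide, which is possible because $\overline K$-points of $A$ on a Zariski-dense set cannot all be swept up by countably many proper subvarieties (here one uses that $A(L)$ for growing $L$, or $A(\overline K)$ itself, is large — e.g.\ via a Hilbert-irreducibility or specialization argument, or simply that a proper subvariety of $A$ has strictly smaller-dimensional intersection with any fixed coset).
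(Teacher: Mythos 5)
There are genuine gaps in both of the two pivots your argument rests on. First, the reduction of the translated isogeny $f=T_c\circ g$ to the group-endomorphism case: your primary route, ``after a finite extension $f$ has a periodic point, since it acts on a projective variety,'' is false for abelian varieties. Take $A=A_1\times A_2$ and $f=\phi_1\times T_c$ with $\phi_1$ an isogeny of dynamical degree $>1$ on $A_1$ and $c\in A_2$ non-torsion: then $\delta_f>1$ but $f$ has no periodic points over $\overline K$, so no change of origin removes the translation. Your fallback (``the canonical height for $T_c\circ g$ still has proper kernel coset'') is also not right as stated: even for an isogeny, Theorem \ref{thm_ks} says the zero locus of $\widehat h_{D,f}$ is $B(\overline K)+\Tor(A(\overline K))$, a \emph{countable union} of translates of $B$, which is typically Zariski dense; ``contained in a single proper subgroup up to torsion'' is exactly the loss you cannot afford without a further argument (for $L$-points one can recover a proper closed bad locus via Mordell--Weil finiteness of torsion in $(A/B)(L)$, but you never invoke this). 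The paper avoids both problems by Silverman's decomposition (Lemma \ref{lem_sil}): up to isogeny $A\sim A_1\times A_2$ with $1_{A_1}-\phi_1$ surjective (so the $A_1$-component of the translation is absorbed by conjugating with $\tau_b$, $b-\phi_1(b)=a_1$) and $\delta_{\phi_2}=1$ (so the $A_2$-component of the translation is harmless for arithmetic degrees), and it gets positivity of $\widehat h_{D,\phi_1}$ not by avoiding $B+\Tor$ pointwise but by choosing one point $x\in A_1$ whose multiples $\Z_{>0}x$ are dense (Theorem \ref{thm_Has}), which forces $\widehat h_{D,\phi_1}(x)>0$ and hence $\widehat h_{D,\phi_1}(lx)=l^2\widehat h_{D,\phi_1}(x)>0$ for all $l$.

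Second, the orbit-disjointness step. Your principle that a Zariski-dense set of $\overline K$-points ``cannot all be swept up by countably many proper subvarieties'' is invalid: $A(\overline K)$ (and any $A(L)$) is countable, and the collision conditions $g^m(x)=g^n(y_i)$ cut out countably many \emph{finite} sets, which can a priori cover any countable set of candidate points; removing ``countably many proper subvarieties'' is not a legitimate move here without a quantitative input. The paper's mechanism is exactly such an input: working with points $z=lx+b+y$, a collision $f^m(z_i)=f^n(z)$ forces $N\phi_1^m(l_ix)=N\phi_1^n(lx)$ for some $N$ (finiteness of $\Ker\mu$), and evaluating the canonical height gives $\delta^{m}l_i^2=\delta^{n}l^2$, i.e.\ $(l/l_i)^2\in\delta^{\Z}$; Lemma \ref{lem_num} supplies infinitely many $l$ for which this is impossible, and Lemma \ref{lem_int} (Faltings) lets one also avoid the prescribed proper closed subset. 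Your phrase ``passing to suitable multiples $N_ix_i$'' gestures at this, but without the height identity and the arithmetic choice of $l$ (or some substitute, e.g.\ a Northcott-type counting argument), condition (3) of Definition \ref{defn_main} is not established.
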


This paper is organized as follows.
In Section \ref{sec_prelim}, we collect notation, conventions, and 
results needed later.
Theorem \ref{thmA} is proved in Section \ref{sec_gen}.
Unirational varieties and abelian varieties 
are studied in Section \ref{sec_unirat} and Section \ref{sec_ab},
respectively.
We study $\PP^1$-bundles over elliptic curves as a third type of examples 
of potentially dense varieties.
Finally, in the appendix, 
we give an affirmative answer to Question \ref{ques_main2} for special 
self-morphisms on $\A^2$, which is a generalization of \cite[Theorem 3]{KS14}.

\begin{ack}
	The first author is supported by JSPS KAKENHI Grant Number JP20K14300.
	The second author is supported by a Research Fellowship of NUS.
	The authors thank Professors Shu Kawaguchi, Joseph Silverman, 
	and De-Qi Zhang for helpful comments.
\end{ack}

\section{Preliminaries}\label{sec_prelim}

\begin{notation}
\begin{itemize}
$ \, $
\item Throughout this article, we work over a fixed number field $K$.
We fix an algebraic closure $\overline K$ of $K$.

\item A \textit{variety} means a geometrically integral separated scheme of finite type over $K$.

\item The symbols $\sim$ (resp.~$\sim_{\mathbb Q}$, $\sim_{\mathbb R}$) and
$\equiv$  mean
the linear equivalence (resp.~$\mathbb Q$-linear equivalence,
$\mathbb R$-linear equivalence) and the numerical equivalence on divisors.

\item Let $\mathbb K= \mathbb Q, \mathbb R$ or $\mathbb C$.
For a $\mathbb K$-linear endomorphism $\phi: V \to V$ on a $\mathbb K$-vector space $V$,
$\rho(\phi)$ denotes the {\it spectral radius} of $f$, that is,
the maximum of absolute values of eigenvalues (in $\C$) of $\phi$.

\item Let $X$ be a projective variety and  $f: X \dashrightarrow X$ be a dominant rational map.
We define the \textit{first dynamical degree $\delta_f$ of $f$} as
$$\delta_f=\lim_{n \to \infty} ((f^n)^*H \cdot H^{\dim X-1})^{1/n}.$$


\item Let $X$ be a projective variety.
For an $\mathbb R$-Cartier divisor $D$ on $X$,
a function $h_D: X(\overline{K}) \to \mathbb R$ is determined up to the
difference of a bounded function.
$h_D$ is called the \textit{height function associated to $D$}.
For definition and properties of height functions, see e.g.~\cite[Part B]{HS00} or \cite[Chapter 3]{Lan83}.

\item Let $X$ be a projective variety and $f: X \dashrightarrow X$ a dominant rational map.
Fix an ample height function $h_H \geq 1$ on $X$.
\begin{itemize}
\item[(1)]
For any field $L \subset \overline K$, set
$$X_f(L)=\{ x \in X(L) \mid
f^n(x) \not\in I_f \ \mathrm{for\ every\ }n \in \Z_{\geq 0} \}.$$
\item[(2)]
For $x \in X_f(\overline K)$, we define
$$\overline \alpha_f(x)=\limsup_{n \to \infty} h_H(f^n(x))^{1/n},$$
$$\underline \alpha_f(x)=\liminf_{n \to \infty} h_H(f^n(x))^{1/n},$$
which we call the \textit{upper arithmetic degree of $f$ at $x$,
lower arithmetic degree of $f$ at $x$}, respectively.
If $\overline \alpha_f(x)=\underline \alpha_f(x)$,
then we set $\alpha_f(x)=\overline \alpha_f(x)$ and we call it the \textit{arithmetic degree of $f$ at $x$} (cf.~\cite{KS16a}).
\item[(3)]
Assume that $X$ is normal and $f$ is a morphism.
Then $\alpha_f(x)$ exists for every $x \in X(\overline K)$ (cf.~\cite{KS16b}).
\end{itemize}

\item Let $f,g: \R \to \R$ be functions and $a \in \R \cup \{ \pm \infty \}$.
Then
$$f(x) \sim g(x) \ (x \to a)$$
means that $\lim_{x \to a} \frac{f(x)}{g(x)} =1$.

\item For $a,b \in \R$,
we set $\AP(a,b)=\{a+bn \mid n \in \Z_{\geq 0} \}$.

\end{itemize}
\end{notation}

We recall some important results needed for our main theorems.
The following two results enable us to use nef canonical heights in general setting.

\begin{thm}[cf.~{\cite[Theorem 6.4 (1)]{MMSZZ20}}]\label{thm_eigen}
Let $X$ be a projective variety and $f:X \to X$ a surjective morphism.
\begin{itemize}
\item[(1)]
There exists a nef $\R$-Cartier divisor $D$ on $X$ such that $D \not\equiv 0$ and $f^*D \equiv \delta_f D$.
\item[(2)]
Assume that $X$ is normal.
Let $D$ be an $\R$-Cartier divisor on $X$ such that $f^*D \equiv \delta_f D$.
Then there exists an $\R$-Cartier divisor $D'$ such that $D' \equiv D$ and $f^*D' \sim_\R \delta_fD'$.
\end{itemize}
\end{thm}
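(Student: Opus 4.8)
The plan is to prove (1) by applying the generalized Perron--Frobenius theorem to the action of $f^{*}$ on the nef cone inside the finite-dimensional real vector space $N^{1}(X)_{\R}$, and to prove (2) by a linear-algebra argument on the space of numerically trivial $\R$-Cartier divisors modulo $\R$-linear equivalence, whose essential input is a Hodge-theoretic estimate on the eigenvalues of $f^{*}$ acting on $H^{1}$.

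For (1): since $f$ is a morphism, $f^{*}$ descends to a linear endomorphism of $N^{1}(X)_{\R}$ satisfying $(f^{n})^{*}=(f^{*})^{n}$. The nef cone $\Nef(X)\subseteq N^{1}(X)_{\R}$ is closed, full-dimensional (the ample classes lie in its interior), and salient (if $D$ and $-D$ are both nef, then $D\cdot C=0$ for every curve $C$, hence $D\equiv 0$), and $f^{*}$ maps it into itself because the pullback of a nef divisor by a morphism is nef. Hence, by the Perron--Frobenius theorem for a linear self-map of a finite-dimensional space preserving such a cone, $\rho(f^{*}|_{N^{1}(X)_{\R}})$ is an eigenvalue of $f^{*}$ with an eigenvector $D\in\Nef(X)$, $D\not\equiv 0$. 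It then remains to see that $\rho(f^{*}|_{N^{1}(X)_{\R}})=\delta_{f}$: the inequality $\delta_{f}\le\rho$ follows from $((f^{n})^{*}H\cdot H^{\dim X-1})=((f^{*})^{n}[H]\cdot[H]^{\dim X-1})\le C\,\|(f^{*})^{n}\|$ and taking $n$-th roots, and the reverse inequality follows from $((f^{n})^{*}D\cdot H^{\dim X-1})=\rho^{n}\,(D\cdot H^{\dim X-1})$ together with the standard fact that a nonzero nef class has positive intersection with $H^{\dim X-1}$.

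For (2): set $E:=f^{*}D-\delta_{f}D$; by hypothesis $E\equiv 0$, so its $\R$-linear equivalence class lies in $W:=\{\,\R\text{-Cartier divisors }B\ \text{with}\ B\equiv 0\,\}/{\sim_{\R}}$, which equals $\Pic^{0}(X)\otimes_{\Z}\R$ and is a finite-dimensional $\R$-vector space (because $X$ is normal and projective, $\Pic^{0}_{X/K}$ is an abelian variety, so $\Pic(X)$ is finitely generated: $\NS(X)$ is finitely generated and $\Pic^{0}(X)$ is finitely generated by Mordell--Weil). The endomorphism $f^{*}$ preserves $W$, and $\delta_{f}$ is not one of its eigenvalues: these eigenvalues are among those of the action of $f$ on $H^{1}$ (equivalently, on the cotangent space of $\Alb(X)$, after passing to a resolution when $X$ is singular), and if $\alpha$ is such an eigenvalue, realized by an eigen $(1,0)$-form $\omega$, then $f^{*}(\omega\wedge\overline{\omega})=|\alpha|^{2}\,\omega\wedge\overline{\omega}$ with $\omega\wedge\overline{\omega}\neq 0$ in $H^{1,1}$, so $|\alpha|^{2}\le\rho(f^{*}|_{H^{1,1}})=\delta_{f}$; since $\delta_{f}>1$ this forces $|\alpha|<\delta_{f}$. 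Therefore $f^{*}-\delta_{f}\cdot\mathrm{id}$ is bijective on $W$; choosing $G\in W$ with $(f^{*}-\delta_{f})G\sim_{\R}-E$ and putting $D':=D+G$, one gets $D'\equiv D$ and $f^{*}D'-\delta_{f}D'=E+(f^{*}G-\delta_{f}G)\sim_{\R}0$, as desired.

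The step expected to be the main obstacle is assembling the ingredients for (2): the finite-dimensionality of $W$ (resting on representability and properness of $\Pic^{0}$ for normal projective varieties together with Mordell--Weil) and, more substantively, the eigenvalue estimate $\rho(f^{*}|_{H^{1}})\le\sqrt{\delta_{f}}$, where normality of $X$ and the morphism hypothesis are genuinely used. An equivalent way to conclude (2) once $\rho(f^{*}|_{W})<\delta_{f}$ is available: the classes $D_{n}:=\delta_{f}^{-n}(f^{n})^{*}D$ satisfy $D_{n}\equiv D$ and $f^{*}D_{n}=\delta_{f}D_{n+1}$, and $D_{n}-D=(\mathrm{id}+T+\cdots+T^{n-1})(\delta_{f}^{-1}E)$ for the contraction $T:=\delta_{f}^{-1}f^{*}|_{W}$, so $D_{n}$ converges in $\Pic(X)_{\R}$ to a class $D'$ with $D'\equiv D$ and $f^{*}D'\sim_{\R}\delta_{f}D'$; this parallels the standard construction of canonical heights.
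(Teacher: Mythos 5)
This statement is one of the quoted preliminaries: the paper gives no proof of Theorem \ref{thm_eigen} and simply cites \cite[Theorem 6.4 (1)]{MMSZZ20}, so there is no in-paper argument to compare against. Your proposal essentially reconstructs the standard proof from that circle of ideas: for (1), Birkhoff--Vandergraft Perron--Frobenius applied to $f^*$ acting on the closed, salient, full-dimensional cone $\Nef(X)\subset N^1(X)_{\R}$, plus the identification $\rho(f^*|_{N^1(X)_\R})=\delta_f$ (your sketch of $\rho\le\delta_f$ implicitly also uses that $C'H-D$ is nef for $C'\gg 0$, so that $((f^n)^*D\cdot H^{\dim X-1})\le C'((f^n)^*H\cdot H^{\dim X-1})$ --- worth saying, but routine); for (2), solving $(f^*-\delta_f)G=-E$ in the finite-dimensional space $\Pic^0$-part of $\Pic(X)\otimes\R$ (Mordell--Weil plus properness of $\Pic^0_{X/K}$, which is exactly where normality enters), with invertibility coming from the bound $\rho(f^*|_{H^1})\le\sqrt{\delta_f}<\delta_f$. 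This is the right architecture and matches how the cited reference argues.

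Two caveats you should make explicit. First, part (2) as transcribed in the paper omits the hypothesis $\delta_f>1$, but your proof genuinely needs it (and without it the statement is false: for a translation by a non-torsion point on an elliptic curve, $\delta_f=1$, $D=[O]$ satisfies $f^*D\equiv D$, yet no $D'\equiv D$ with $f^*D'\sim_\R D'$ exists since $f^*$ is trivial on $\Pic^0$ while $f^*[O]-[O]$ is non-torsion there); all applications in the paper have $\delta_f>1$, so you are proving the statement actually used, but note the hypothesis. Second, the crux for singular normal $X$ is the eigenvalue bound on $H^1$ (equivalently on $\Pic^0$ or the Albanese), and ``after passing to a resolution'' is too quick: $f$ does not lift to a morphism of a resolution, only to a dominant rational self-map, so $(f^*)^n\neq(f^n)^*$ on $H^{1,1}$ and the clean identity $\rho(f^*|_{H^{1,1}})=\delta_f$ is not available as stated. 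The argument is rescued because holomorphic $1$-forms pull back functorially under dominant rational maps (equivalently, one can work with $\Alb$), and the wedge $i\,\omega\wedge\overline\omega$ is dominated pointwise by a K\"ahler form, so integrating over a resolution of the graph of $f^n$ against $H^{\dim X-1}$ gives $|\alpha|^{2n}\preceq\deg_1(f^n)$ and hence $|\alpha|^2\le\delta_f$ by birational invariance of $\delta_1$. That regularized-graph step (or an equivalent weight argument) is the real content of the cited theorem and should be spelled out; with it, and with the eigenvalues of $f^*$ on $\Pic^0(X)(K)\otimes\R$ being roots of the characteristic polynomial of the induced endomorphism of the Picard/Albanese abelian variety (as you indicate), your proof is complete.
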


\begin{thm}[cf.~{\cite[Theorem 1.1]{CS93}}]\label{thm_canht}
Let $X$ be a projective variety, $f: X \to X$ a surjective morphism, and  $D$ an $\R$-Cartier divisor on $X$ such that $f^*D \sim_{\R} \lambda D$ with $\lambda >1$.
Take a height function $h_D$ associated to $D$.
\begin{itemize}
\item[(1)]
The limit
$$\widehat h_{D,f}(x)=\lim_{n \to \infty} \frac{h_D(f^n(x))}{\lambda^n}$$
converges for any $x \in X(\overline K)$.
\item[(2)]
$\widehat h_{D,f}=h_D+O(1)$.
\item[(3)]
$\widehat h_{D,f} \circ f = \lambda \widehat h_{D,f}$.
\item[(4)]
If $\widehat h_{D,f}(x)>0$, then $\alpha_f(x) \geq \lambda$.
\end{itemize}
\end{thm}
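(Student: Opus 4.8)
The plan is to run the classical Call--Silverman telescoping construction, paying attention only to the fact that the coefficients are real. First I would upgrade the relation $f^*D \sim_\R \lambda D$ to an estimate on height functions: the height machine extends to $\R$-Cartier divisors and is well defined on $\sim_\R$-equivalence classes up to $O(1)$, so $h_{f^*D} = \lambda h_D + O(1)$, while functoriality gives $h_{f^*D} = h_D\circ f + O(1)$. Combining these produces a constant $C>0$, depending only on $D$, $f$ and the chosen $h_D$, with $|h_D(f(y)) - \lambda h_D(y)| \le C$ for all $y \in X(\overline K)$.

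Next, setting $a_n(x) = \lambda^{-n} h_D(f^n(x))$, the estimate above gives $|a_{n+1}(x) - a_n(x)| \le C\lambda^{-(n+1)}$, and since $\lambda > 1$ the series $\sum_{n\ge 0} C\lambda^{-(n+1)} = C/(\lambda - 1)$ converges; hence $(a_n(x))_n$ is Cauchy and $\widehat h_{D,f}(x) = \lim_n a_n(x)$ exists, which is (1). Since the tail bound $\sum_{n\ge 0}|a_{n+1}(x) - a_n(x)| \le C/(\lambda - 1)$ is independent of $x$, we get $|\widehat h_{D,f}(x) - h_D(x)| \le C/(\lambda - 1)$, proving (2). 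Part (3) is immediate: shifting the index in the defining limit gives $\widehat h_{D,f}(f(x)) = \lambda \lim_n \lambda^{-(n+1)} h_D(f^{n+1}(x)) = \lambda\,\widehat h_{D,f}(x)$.

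For (4), fix the ample height $h_H \ge 1$ used in the definition of the arithmetic degree. Ampleness of $H$ gives constants $C_1, C_2 > 0$ with $h_D \le C_1 h_H + C_2$ on $X(\overline K)$ (take $C_1$ so that $C_1 H - D$ is ample and use that ample heights are bounded below). Iterating (3) yields $\widehat h_{D,f}(f^n(x)) = \lambda^n\widehat h_{D,f}(x)$, so by (2),
$$C_1 h_H(f^n(x)) + C_2 \ge h_D(f^n(x)) \ge \lambda^n\widehat h_{D,f}(x) - \frac{C}{\lambda - 1}.$$
If $\widehat h_{D,f}(x) > 0$ the right-hand side grows like $\lambda^n$, so $h_H(f^n(x)) \ge c\,\lambda^n$ for $n \gg 0$ with $c > 0$; taking $n$-th roots gives $\liminf_n h_H(f^n(x))^{1/n} \ge \lambda$, i.e.\ $\underline\alpha_f(x) \ge \lambda$, hence $\alpha_f(x) \ge \lambda$ whenever the arithmetic degree exists (in particular when $X$ is normal).

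The main obstacle here is essentially bookkeeping rather than conceptual: tracking the $\R$-coefficients through the height machine in the first step, and fixing the comparison constants between $h_D$ and the ample height $h_H$ in the last step. The statement is a mild generalization to $\R$-divisors of \cite[Theorem 1.1]{CS93}, so the work lies in organizing these standard estimates cleanly.
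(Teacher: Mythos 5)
Your proposal is correct and follows exactly the standard Call--Silverman telescoping argument that the paper itself does not reprove but simply cites from \cite[Theorem 1.1]{CS93} (extended linearly to $\R$-Cartier divisors and $\sim_\R$). The only caveat, which you already note, is that part (4) literally yields $\underline\alpha_f(x)\geq\lambda$, which is what the paper uses (and equals $\alpha_f(x)$ whenever the arithmetic degree exists, e.g.\ for morphisms on normal varieties).
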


%
%

\begin{lem}\label{lem_cover}
Let $X, Y$ be projective varieties and $f: X \to X$, $g: Y \to Y$ surjective morphisms.
Let $\pi:X \to Y$ be a finite surjective morphism such that $g \circ \pi=\pi \circ f$.
Take an intermediate field $K \subset L \subset \overline K$.
If $(X,f)$ has densely many $L$-rational points with maximal arithmetic degree,
then $(Y,g)$ has densely many $L$-rational points with maximal arithmetic degree.
\end{lem}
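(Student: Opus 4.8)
The plan is to push a witnessing set $S\subset X_{f}(L)=X(L)$ for $(X,f)$ forward along $\pi$ and then repair condition~(3) of Definition~\ref{defn_main}.

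First I would record how the invariants transfer. Since $\pi$ is finite, $\pi^{*}H$ is ample on $X$ for $H$ ample on $Y$, and $\pi\circ f^{n}=g^{n}\circ\pi$ together with the projection formula gives $(f^{n})^{*}\pi^{*}H\cdot(\pi^{*}H)^{\dim X-1}=(\deg\pi)\,(g^{n})^{*}H\cdot H^{\dim Y-1}$; taking $n$-th roots and letting $n\to\infty$ yields $\delta_{f}=\delta_{g}$, so $\delta_{g}>1$ and $\dim X=\dim Y\ge 1$. For any $x\in X(\overline K)$ at which $\alpha_{f}(x)$ exists, comparing the ample heights $h_{H_{X}}$ and $h_{\pi^{*}H}=h_{H}\circ\pi+O(1)$ and taking $n$-th roots shows $\alpha_{g}(\pi(x))$ exists and equals $\lim_{n}h_{H_{X}}(f^{n}(x))^{1/n}=\alpha_{f}(x)$; applied to $x\in S$ this gives $\alpha_{g}(\pi(x))=\delta_{g}$ for every $x\in S$. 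Also $\pi(S)$ is Zariski dense in $Y$, since $\pi^{-1}(\overline{\pi(S)})$ is closed, contains $S$, hence equals $X$, so $\overline{\pi(S)}\supseteq\pi(X)=Y$. Thus $\pi(S)$ already satisfies (1) and (2).

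The remaining issue --- and the main obstacle --- is that $\pi$ may collapse orbits, so $\pi(S)$ need not satisfy (3). On $S$ define $x\approx x'$ iff $O_{g}(\pi(x))\cap O_{g}(\pi(x'))\ne\varnothing$; this is an equivalence relation (transitivity: $g^{a}(\pi x)=g^{b}(\pi x')$ and $g^{c}(\pi x')=g^{e}(\pi x'')$ give $g^{a+c}(\pi x)=g^{b+e}(\pi x'')$). I claim every $\approx$-class has at most $N:=\sup_{y\in Y}\#\pi^{-1}(y)$ elements, which is finite because $\pi$ is finite. Fix $x\in S$ and set $F_{a}=\pi^{-1}(g^{a}(\pi(x)))$. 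Since $\alpha_{g}(\pi(x))=\delta_{g}>1$, $\pi(x)$ is not $g$-preperiodic, so the $g^{a}(\pi(x))$ are pairwise distinct; hence the $F_{a}$ are pairwise disjoint, of size $\le N$, with $f(F_{a})\subseteq F_{a+1}$. If $x'\in S$ and $x\approx x'$, then $O_{f}(x')$ meets $\bigsqcup_{a}F_{a}=\pi^{-1}(O_{g}(\pi(x)))$; taking $b_{0}$ minimal with $f^{b_{0}}(x')\in F_{a_{0}}$, we get $f^{b_{0}+k}(x')\in F_{a_{0}+k}$ for all $k\ge 0$, so for every large $a$ the orbit $O_{f}(x')$ meets $F_{a}$ in a point $\psi_{a}(x')$. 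For distinct $x',x''\in S$ with $x\approx x'$ and $x\approx x''$ (allowing $x'=x$, with $a_{0}=b_{0}=0$, so $\psi_{a}(x)=f^{a}(x)$), the points $\psi_{a}(x'),\psi_{a}(x'')$ differ --- otherwise $O_{f}(x')\cap O_{f}(x'')\ne\varnothing$, contradicting (3) for $S$. Hence $\#\{x'\in S:x\approx x',\ a_{0}(x')\le a\}\le\#F_{a}\le N$ for large $a$, and $a\to\infty$ bounds the $\approx$-class of $x$ by $N$.

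Finally I would extract a Zariski dense transversal $T\subseteq S$ of $\approx$. As $L\subseteq\overline{\mathbb Q}$, the set $S$ is countable and $X$ has only countably many closed subsets; enumerate the proper closed ones as $Z_{1},Z_{2},\dots$ and build $T$ by choosing one representative per $\approx$-class while running through the $Z_{j}$: at stage $j$, if every representative chosen so far lies in $Z_{j}$, pick a still-unassigned class $C\not\subseteq Z_{j}$ (there are infinitely many, since $S\setminus Z_{j}$ is infinite --- $S$ being dense in the irreducible $X$ of dimension $\ge 1$ --- and every class is finite) and choose its representative in $C\setminus Z_{j}$; assign the rest arbitrarily at the end. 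Then $T\not\subseteq Z_{j}$ for all $j$, so $\overline T=X$, hence $\pi(T)$ is Zariski dense in $Y$. Every point of $\pi(T)$ has arithmetic degree $\delta_{g}$ by the first step, and for distinct $y=\pi(x),y'=\pi(x')$ in $\pi(T)$ with $x,x'\in T$ we have $x\ne x'$, so $x\not\approx x'$, i.e.\ $O_{g}(y)\cap O_{g}(y')=\varnothing$. Therefore $\pi(T)\subseteq Y(L)=Y_{g}(L)$ satisfies (1)--(3), proving the lemma. The only nontrivial input is the finiteness of the $\approx$-classes, which is precisely where the disjoint-orbit hypothesis on $S$ is used; without it $\pi$ could merge infinitely many orbits.
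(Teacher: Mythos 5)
Your proof is correct and rests on the same key point as the paper's: since $\pi$ has finite fibers, only finitely many (at most $\deg\pi$) pairwise disjoint $f$-orbits can map into a single $g$-orbit, and combining this with the countability of proper closed subsets of the variety over $\overline K$ yields a Zariski dense set of images with pairwise disjoint $g$-orbits. The paper runs this as a greedy inductive selection directly on $Y$, deriving a contradiction at each stage and leaving the fiber-counting and the transfers $\delta_f=\delta_g$, $\alpha_g(\pi(x))=\alpha_f(x)$ implicit, whereas you package the same idea as an equivalence relation on $S$ with finite classes plus a dense transversal and verify those transfers explicitly --- a difference of presentation rather than of substance.
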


\begin{proof}
Let $S \subset X(L)$ be a Zariski dense subset such that 
\begin{itemize}
\item $\alpha_{f}(x)=\delta_{f}$ for $x \in S$.
\item $O_{f}(x) \cap O_{f}(x')=\varnothing$ for $x,x' \in S$ with $x \neq x'$.
\end{itemize}
Let $\{Y_n\}_{n=1}^\infty$ be the set of proper closed subsets of $Y_{\overline K}$.
Assume that we have $x_1, \ldots,x_k \in S$ such that 
\begin{itemize}
\item $\pi(x_i) \not\in Y_i$ for $1 \leq i \leq k$.
\item $O_g(\pi(x_i)) \cap O_g(\pi(x_j))=\varnothing$ for $1 \leq i, j \leq k$ with $i \neq j$.
\end{itemize}
Now there are infinitely many points in $S$ which are not contained in $\pi^{-1}(Y_{k+1})$.

Suppose that the $g$-orbit of any $y \in \pi(S) \setminus Y_{k+1}$ intersects 
$O_g(\pi(x_i))$ for some $1 \leq i \leq k$.
Then we have a infinite subset $S' \subset S \setminus \pi^{-1}(Y_{k+1})$ 
and some $1 \leq i \leq k$ such that 
$O_g(\pi(x)) \cap O_g(\pi(x_i)) \neq \varnothing$ for any $x \in S'$.
This implies that $O_f(x) \cap O_f(x') \neq \varnothing$ for some 
$x, x' \in S'$ with $x \neq x'$, a contradiction.
So we have $x_{k+1} \in S \setminus Y_{k+1}$ such that 
$O_g(\pi(x_{k+1})) \cap O_g(\pi(x_i))=\varnothing$ for $1 \leq i \leq k$.
Continuing this process, the assertion follows.
\end{proof}

We will use the following estimate of the number of rational points of bounded heights 
on a projective space.

\begin{thm}[cf.~{\cite{Sch64}}, {\cite[Theorem B.6.2]{HS00}}]\label{thm_rate}
Let $H$ be the absolute multiplicative natural height function on $\PP^N_K$.
Then there exists a constant $C=C(K,N)>0$ such that
$$|\{x \in \PP^N(K) \mid H(x) <T \}| \sim C T^{N+1} \ (T \to \infty).$$
\end{thm}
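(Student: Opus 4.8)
The plan is to reproduce Schanuel's classical counting argument, which reduces the count of projective points of bounded height to a lattice-point count in Euclidean space. Write $d=[K:\Q]$, let $r_1$ and $r_2$ denote the numbers of real and of complex places, $h$ the class number, $R$ the regulator, $w=\#\mu_K$ the number of roots of unity in $K$, and $|d_K|$ the absolute discriminant; the absolute values $\|\cdot\|_v$ are normalized so that $H(x)=\prod_v\max_i\|x_i\|_v$ satisfies the product formula over $K$. The case $N=0$ is trivial, so assume $N\ge 1$.

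First I would parametrize $\PP^N(K)$ by ideal classes. For $x=(x_0:\cdots:x_N)$ the fractional ideal $\mathfrak{b}_x=\sum_j x_j\mathcal{O}_K$ has an ideal class depending only on $x$; fixing integral representatives $\mathfrak{a}_1,\dots,\mathfrak{a}_h$ of the ideal class group, each $x$ has, after scaling by $K^\times$, a representative $\mathbf{x}\in\mathfrak{a}_i^{N+1}$ with $\mathfrak{b}_{\mathbf{x}}=\mathfrak{a}_i$ for exactly one $i$, unique up to the diagonal action of $\mathcal{O}_K^\times$ (which acts freely on tuples having a nonzero coordinate). Since $\prod_{v\nmid\infty}\max_j\|x_j\|_v=\mathrm{N}(\mathfrak{a}_i)^{-1}$ for such a tuple, the quantity to estimate becomes
$$\sum_{i=1}^{h}\ \#\Big\{\mathbf{x}\in\mathfrak{a}_i^{N+1}\ :\ \mathfrak{b}_{\mathbf{x}}=\mathfrak{a}_i,\ \textstyle\prod_{v\mid\infty}\max_j\|x_j\|_v< \mathrm{N}(\mathfrak{a}_i)\,T\Big\}\Big/\mathcal{O}_K^{\times}.$$

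Next I would remove the generation condition $\mathfrak{b}_{\mathbf{x}}=\mathfrak{a}_i$ by Möbius inversion over integral ideals: every $\mathbf{x}\in\mathfrak{a}_i^{N+1}$ has $\mathfrak{b}_{\mathbf{x}}=\mathfrak{a}_i\mathfrak{d}$ for a unique integral $\mathfrak{d}$, and $\mathfrak{d}\mid\mathfrak{e}$ is equivalent to $\mathbf{x}\in(\mathfrak{a}_i\mathfrak{e})^{N+1}$, so the inversion produces the factor $\zeta_K(N+1)^{-1}=\sum_{\mathfrak{d}}\mu(\mathfrak{d})\mathrm{N}(\mathfrak{d})^{-(N+1)}$, which converges because $N+1\ge 2$. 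Then I would construct an explicit fundamental domain for the $\mathcal{O}_K^\times$-action by composing with the logarithm map $\mathbf{x}\mapsto(\log\max_j\|x_j\|_v)_{v\mid\infty}$ and taking the cone over a fundamental parallelotope for the unit lattice inside the trace-zero hyperplane; this records the regulator $R$, and a further cut (a sign at a real place or the argument at a complex place) records the factor $1/w$, since torsion units act trivially on logarithms. After this reduction the counting region for each $i$ is the truncation $\prod_{v\mid\infty}\max_j\|x_j\|_v<\mathrm{N}(\mathfrak{a}_i)T$ of a fixed cone, which, up to a radial rescaling, is $T^{1/d}$ times a bounded domain with piecewise-smooth boundary.

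Finally I would invoke a Davenport--Lipschitz counting principle: for a full lattice $\Lambda\subset\R^n$ and a region $R_T$ obtained by dilating a fixed domain whose boundary is a finite union of Lipschitz images of $[0,1]^{n-1}$, one has $\#(\Lambda\cap R_T)=\operatorname{vol}(R_T)/\operatorname{covol}(\Lambda)+O(T^{N+1-1/d})$ with $n=d(N+1)$. Here $\operatorname{covol}(\mathfrak{a}_i^{N+1})=\big(2^{-r_2}\mathrm{N}(\mathfrak{a}_i)\sqrt{|d_K|}\big)^{N+1}$, and computing $\operatorname{vol}(R_T)$ in polar coordinates (archimedean absolute values and arguments, together with the logarithmic coordinates on the cone) gives $\operatorname{vol}(R_T)\sim c\,\mathrm{N}(\mathfrak{a}_i)^{N+1}T^{N+1}$ with $c$ independent of $i$; so all $h$ ideal classes contribute equal main terms. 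Collecting these contributions together with $\zeta_K(N+1)^{-1}$ and the archimedean unit-ball volumes $2^{r_1}$ and $(2\pi)^{r_2}$ yields the claim with Schanuel's constant
$$C=\frac{hR}{w\,\zeta_K(N+1)}\left(\frac{2^{r_1}(2\pi)^{r_2}}{\sqrt{|d_K|}}\right)^{N+1}(N+1)^{r_1+r_2-1}.$$
The delicate part, as always, is the error term: the unit fundamental domain is an unbounded cone with a cusp, so one must check that its height-truncation has a uniformly Lipschitz-parametrizable boundary and, separately, that the tail of the Möbius sum over ideals $\mathfrak{d}$ of large norm is absorbed into the $O$-term; these are the two places where the hypothesis $N\ge 1$ is essential.
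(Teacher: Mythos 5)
Your argument is a faithful reconstruction of Schanuel's original proof; note, however, that the paper does not prove Theorem \ref{thm_rate} at all. It is quoted as a known result from \cite{Sch64} and \cite[Theorem B.6.2]{HS00}, and the remark immediately following it stresses that only a crude form is needed: in the proof of Theorem \ref{thm_unirat2} the estimate is used solely to guarantee that the counting sets $N_K(p,q)$ are nonempty (indeed tend to infinity) for suitable large intervals. So the comparison here is between a citation and a full proof. Your route --- parametrization of $\PP^N(K)$ by ideal classes, M\"obius inversion over integral ideals producing the factor $\zeta_K(N+1)^{-1}$ (convergent precisely because $N+1\geq 2$), a cone fundamental domain for the unit action via the logarithmic embedding accounting for $R$ and $1/w$, and a Davenport--Lipschitz lattice-point count with error $O(T^{N+1-1/d})$ --- is exactly the standard argument, and the constant you arrive at is Schanuel's constant, so the reconstruction is sound. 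Two small points deserve attention. First, in the M\"obius step the divisibility is reversed: with $\mathfrak{b}_{\mathbf{x}}=\mathfrak{a}_i\mathfrak{d}$, the condition $\mathbf{x}\in(\mathfrak{a}_i\mathfrak{e})^{N+1}$ is equivalent to $\mathfrak{e}\mid\mathfrak{d}$, not $\mathfrak{d}\mid\mathfrak{e}$; the inversion still yields $\sum_{\mathfrak{d}}\mu(\mathfrak{d})\mathrm{N}(\mathfrak{d})^{-(N+1)}$ as you claim, and one must also keep the archimedean bound $\mathrm{N}(\mathfrak{a}_i)T$ (not $\mathrm{N}(\mathfrak{a}_i\mathfrak{d})T$) in each term. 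Second, your normalization ($H(x)=\prod_v\max_i\|x_i\|_v$ with the product formula over $K$) is the height \emph{relative} to $K$, i.e.\ $H_K=H^{[K:\Q]}$ in the notation of \cite{HS00}; what your argument proves is $\#\{x\in\PP^N(K)\mid H_K(x)<T\}\sim CT^{N+1}$, whereas for the absolute height as literally written in Theorem \ref{thm_rate} the exponent would be $[K:\Q](N+1)$. This imprecision is already present in the paper's statement (which the subsequent remark acknowledges) and is harmless for the application, where any polynomial growth suffices; but if you want the statement verbatim you should either read $H$ as $H_K$ or replace $T^{N+1}$ by $T^{[K:\Q](N+1)}$ and adjust $C$ accordingly.
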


\begin{rem}
The statements in \cite{Sch64} and \cite{HS00} give more precise estimates of
the growth rate, but the above form is sufficient for our purpose.
\end{rem}

The following result guarantees the potential density of abelian varieties.

\begin{thm}[cf.~{\cite[Proposition 4.2]{Has03}}]\label{thm_Has}
Let $A$ be an abelian variety.
Then there exists  $x \in A(\overline K)$ such that
$\Z_{> 0}x =\{nx \mid n \in \Z_{> 0} \}$ is Zariski dense.
\end{thm}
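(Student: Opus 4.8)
It suffices to find $x\in A(\overline K)$ with $\overline{\Z x}=A$: for any $x$ the Zariski closures of $\Z_{>0}x$ and $\Z x$ coincide, because if $Z=\overline{\Z_{>0}x}$ then $x+Z\subseteq Z$, so the increasing chain $Z\subseteq -x+Z\subseteq -2x+Z\subseteq\cdots$ stabilises ($A$ being noetherian) and forces $x+Z=Z$; since $x\in Z$ this yields $\overline{\Z x}\subseteq Z$, and the reverse inclusion is clear. Recall the structure of such closures: $\overline{\Z x}$ is a closed subgroup of $A$ whose identity component $W$ is an abelian subvariety, and $kx\in W$ for some integer $k\ge1$. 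Hence $\overline{\Z x}=A$ if and only if $\psi(x)$ is non-torsion for every surjective homomorphism $\psi\colon A\to C$ onto a simple abelian variety $C$: if $W\subsetneq A$ take $C$ a simple quotient of $A/W$; conversely, if $\psi(x)$ is torsion then $kx\in\ker\psi$, so $W\subseteq(\ker\psi)^0\subsetneq A$.

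Using this criterion I would reduce to products of simple abelian varieties. The existence of a point with $\overline{\Z x}=A$ is an isogeny invariant: an isogeny $\phi\colon A\to A'$ carries such an $x$ to such a $\phi(x)$, while a lift to $A$ of such an $x'\in A'$ satisfies $\overline{\Z x}+\ker\phi=A$, hence $\overline{\Z x}=A$ since $\ker\phi$ is finite. So by Poincar\'e complete reducibility we may assume $A=\prod_{k=1}^N B_k$ with each $B_k$ simple. For such $A$ one has $\Hom(A,C)=\bigoplus_k\Hom(B_k,C)$, with $\Hom(B_k,C)=0$ unless $B_k$ is isogenous to $C$; unwinding the criterion, $\overline{\Z(x_1,\dots,x_N)}=A$ holds if and only if, for each isogeny class occurring among the $B_k$ --- say with index set $I$, a representative $C$, and isogenies $\mu_k\colon B_k\to C$ for $k\in I$ --- the points $\mu_k(x_k)$, $k\in I$, are linearly independent over $\End^0(C):=\End(C)\otimes\Q$ in the $\End^0(C)$-module $C(\overline K)\otimes_{\Z}\Q$.

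Finally I would produce such points. Only finitely many isogeny classes occur; for each, $\End^0(C)$ is a finite-dimensional $\Q$-algebra, while $C(\overline K)\otimes\Q$ is infinite-dimensional over $\Q$ because $C(\overline K)=C(\overline{\Q})$ has infinite Mordell--Weil rank (Frey--Jarden). Therefore $C(\overline K)\otimes\Q$ is infinite-dimensional over $\End^0(C)$, so one may choose enough $\End^0(C)$-linearly independent points of $C(\overline K)$ and lift them through the isogenies $\mu_k$ to obtain the desired $x_k\in B_k(\overline K)$; then $x=(x_1,\dots,x_N)$ works. The step I expect to be the crux is precisely this last one: over an uncountable field a ``very general'' point of $A$ would automatically generate a Zariski dense cyclic subgroup, whereas over the number field $\overline K$ one genuinely needs the arithmetic input that simple abelian varieties attain infinite Mordell--Weil rank over $\overline{\Q}$. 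The other ingredients --- the structure of Zariski closures of cyclic subgroups of abelian varieties, Poincar\'e reducibility, and the linear algebra over $\End^0(C)$ --- are routine.
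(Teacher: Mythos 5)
Your argument is correct, and there is nothing in the paper to compare it with line by line: Theorem \ref{thm_Has} is not proved in the paper but simply quoted from \cite[Proposition 4.2]{Has03}, so what you have written is a self-contained proof of the cited input. Its skeleton --- (i) the closure of the forward orbit equals the closure of the full cyclic group, which is a closed subgroup with identity component an abelian subvariety $W$ containing $kx$, yielding the criterion that $\overline{\Z x}=A$ if and only if $\psi(x)$ is non-torsion for every surjection $\psi$ onto a simple quotient; (ii) isogeny invariance plus Poincar\'e complete reducibility to reduce to a product of simple factors, where the criterion unwinds to $\End^0(C)$-linear independence of the components within each isogeny class, using that $\Hom(B_k,C)\otimes\Q$ is free of rank one over the division algebra $\End^0(C)$ on a chosen isogeny $\mu_k$; (iii) the Frey--Jarden theorem that a positive-dimensional abelian variety has infinite Mordell--Weil rank over $\overline{\Q}$, so that $C(\overline K)\otimes\Q$ is infinite-dimensional over the finite-dimensional algebra $\End^0(C)$ and the required independent points exist and lift through the isogenies --- is essentially the standard route to this potential-density statement and is in the spirit of Hassett's own treatment. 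You also correctly identify the crux: over the countable field $\overline{\Q}$ no ``very general point'' argument is available, and the genuine arithmetic input is the infinite-rank theorem. The steps you leave implicit (that $kx\in\ker\psi$ forces $W\subseteq(\ker\psi)^0$ after passing to a further multiple, that a nonzero homomorphism to a simple abelian variety is automatically surjective, that independent elements of $C(\overline K)\otimes\Q$ can be taken of the form $c\otimes 1$, and the trivial case $\dim A=0$) are indeed routine.
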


Kawaguchi and Silverman gave the following precise description of the zero locus of
nef canonical height functions on abelian varieties, which we will use in Section \ref{sec_ab}.

\begin{thm}[cf.~{\cite[Theorem 29]{KS16b}}]\label{thm_ks}
Let $A$ be an abelian variety, $f:A \to A$ an isogeny with $\delta_f>1$,
and $D \not\equiv 0$ a symmetric nef $\R$-Cartier divisor on $A$ such that $f^*D \equiv \delta_f D$.
Then there is a proper abelian subvariety $B \subset A_{\overline K}$ such that
$$\{ x \in A(\overline K) \mid \widehat h_{D,f}(x)=0 \} = B(\overline K)+\Tor(A(\overline K)).$$
\end{thm}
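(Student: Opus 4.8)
The plan is to identify $\widehat h_{D,f}$ with a N\'eron--Tate canonical height, observe that this height is a positive semi-definite quadratic form because $D$ is nef, and then pin down its zero locus through the polarization homomorphism attached to $D$.

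\emph{Step 1: reduction to a N\'eron--Tate height.} By Theorem \ref{thm_eigen}(2) there is an $\R$-Cartier divisor $D'\equiv D$ with $f^*D'\sim_\R\delta_f D'$; replacing $D'$ by $\tfrac12(D'+[-1]^*D')$ (still $\equiv D$, and still satisfying $f^*(\,\cdot\,)\sim_\R\delta_f(\,\cdot\,)$ since $f$ is a homomorphism) I may assume $D'$ is symmetric, hence nef as well. Then the N\'eron--Tate height $\widehat h_{D'}$ is a quadratic form with $\widehat h_{D'}=h_{D'}+O(1)$, and functoriality of N\'eron--Tate heights along the isogeny $f$ together with $f^*D'\sim_\R\delta_f D'$ gives $\widehat h_{D'}\circ f=\widehat h_{f^*D'}=\delta_f\widehat h_{D'}$. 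Hence $h_{D'}(f^n x)/\delta_f^n=\widehat h_{D'}(f^n x)/\delta_f^n+O(\delta_f^{-n})\to\widehat h_{D'}(x)$, so by Theorem \ref{thm_canht} we get $\widehat h_{D',f}=\widehat h_{D'}$. Since the conclusion depends only on the numerical class, it suffices to prove the statement for $\widehat h_{D'}$. Because $D'$ is nef, $D'+\varepsilon H$ is ample for every ample $H$ and $\varepsilon>0$, so $\widehat h_{D'}+\varepsilon\widehat h_H=\widehat h_{D'+\varepsilon H}\ge0$; letting $\varepsilon\to0$ gives $\widehat h_{D'}\ge0$. By Cauchy--Schwarz, the zero set $Z=\{x\mid\widehat h_{D'}(x)=0\}$ is exactly the radical of the associated N\'eron--Tate pairing $\langle\,,\,\rangle_{D'}$; in particular $Z$ is a subgroup of $A(\overline K)$ containing $\Tor(A(\overline K))$.

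\emph{Step 2: describing the radical.} Let $\phi_{D'}\colon A\to\widehat A$ be the symmetric homomorphism attached to the class of $D'$ (for an $\R$-divisor, interpret this via the $\R$-span of the homomorphisms $\phi_{D_i}$ for an expression $D'=\sum r_iD_i$ with $D_i$ symmetric integral divisors; the argument below is identical, working with $\NS(-)_\R$ and $\Pic^0(-)\otimes\R$). Then $\langle x,y\rangle_{D'}=\langle x,\phi_{D'}(y)\rangle$, where $\langle\,,\,\rangle\colon A(\overline K)\times\widehat A(\overline K)\to\R$ is the canonical (Poincar\'e) N\'eron--Tate pairing, whose left and right radicals are $\Tor(A(\overline K))$ and $\Tor(\widehat A(\overline K))$ by positive-definiteness modulo torsion of ample symmetric N\'eron--Tate heights over $\overline K$. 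Let $B\subset A_{\overline K}$ be the identity component of $\ker\phi_{D'}$; it is an abelian subvariety, and it is proper because $D'\not\equiv0$ forces $\phi_{D'}\ne0$. The image of $\phi_{D'}$ is $\pi^*\widehat{A/B}\subset\widehat A$ for the quotient $\pi\colon A\to A/B$, so $x\in Z$ iff $\langle x,\pi^*\bar\eta\rangle=0$ for all $\bar\eta\in\widehat{A/B}(\overline K)$; by the adjunction $\langle x,\pi^*\bar\eta\rangle=\langle\pi(x),\bar\eta\rangle_{A/B}$ this says $\pi(x)$ lies in the left radical of the canonical pairing on $A/B$, i.e.\ $\pi(x)\in\Tor((A/B)(\overline K))$. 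Hence $Z=\pi^{-1}\bigl(\Tor((A/B)(\overline K))\bigr)$.

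\emph{Step 3: lifting torsion.} It remains to check $\pi^{-1}\bigl(\Tor((A/B)(\overline K))\bigr)=B(\overline K)+\Tor(A(\overline K))$. The inclusion $\supseteq$ is immediate. Conversely, if $n\pi(x)=0$ then $nx\in B(\overline K)$; since $B$ is an abelian variety over the algebraically closed field $\overline K$, $B(\overline K)$ is divisible, so $nx=nb$ for some $b\in B(\overline K)$, whence $x-b\in A[n](\overline K)$ and $x=b+(x-b)$ with $x-b$ torsion. This yields the stated description with $B$ a proper abelian subvariety.

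\emph{Main obstacle.} The heart is Step 2: showing that the null space of $\widehat h_{D'}$ is accounted for exactly by torsion on the quotient $A/(\ker\phi_{D'})^0$. This relies on (i) the adjunction/functoriality of the canonical N\'eron--Tate pairing along $\pi$, and (ii) positive-definiteness modulo torsion of N\'eron--Tate heights of ample symmetric divisors over $\overline K$. One must also handle the $\R$-divisor case with care: $\ker\phi_{D'}$ has to be read off from the $\R$-linear combination of the integral homomorphisms $\phi_{D_i}$, and one checks that its identity component is still an abelian subvariety and that the adjunction step still applies. By contrast, the dynamical hypothesis ($f^*D\equiv\delta_fD$, $\delta_f>1$) enters only lightly---to give meaning to $\widehat h_{D,f}$ and to identify it with $\widehat h_{D'}$; the geometric description of the zero locus is then a statement about the symmetric nef class alone.
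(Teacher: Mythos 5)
First, note that the paper does not prove this statement: it is quoted from \cite[Theorem 29]{KS16b} (the ``cf.'' citation), so your argument has to stand entirely on its own. Your Steps 1 and 3 are fine (the symmetrization, the identification $\widehat h_{D,f}=\widehat h_{D'}$, semipositivity from nefness, Cauchy--Schwarz, and the divisibility argument lifting torsion along $\pi$), and for an \emph{integral} symmetric nef divisor your Step 2 is a correct and standard argument.

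The genuine gap is exactly at the point you flag as the heart and then wave through: the $\R$-divisor case, which is the case the theorem is actually needed for (a nef eigenclass with $\delta_f>1$ irrational is genuinely a real class). For $D'=\sum_i r_iD_i$ with $r_i\in\R$, the element $\phi_{D'}=\sum_i r_i\phi_{D_i}\in\Hom(A,\widehat A)\otimes\R$ is not a morphism: it has no scheme-theoretic kernel, no ``identity component of $\ker\phi_{D'}$'', it does not map $A(\overline K)$ into $\widehat A(\overline K)$, and its image is not of the form $\widehat\pi(\widehat{A/B}(\overline K))$, so the adjunction step collapses. Worse, the naive real null space need not be tangent to any abelian subvariety, so no choice of $B$ can be ``read off'' from it. Concretely, take $A=E\times E$ with $E$ non-CM, identify $\NS(A)_\R$ with real symmetric $2\times2$ matrices, let $f$ be the automorphism with matrix $G=\left(\begin{smallmatrix}1&2\\1&1\end{smallmatrix}\right)$ and $D$ the nef class corresponding to $M=\left(\begin{smallmatrix}1&\sqrt2\\ \sqrt2&2\end{smallmatrix}\right)$; then $f^*D\equiv(3+2\sqrt2)D$ and $\delta_f=3+2\sqrt2$, so all hypotheses hold, but the real null space of $M$ is the irrational line $\R\cdot(\sqrt2,-1)$, which is the tangent direction of no abelian subvariety, while the actual zero locus of $\widehat h_{D,f}$ is just $\Tor(A(\overline K))$ (i.e.\ $B=0$): indeed $\widehat h_D(x_1,x_2)=\lVert x_1+\sqrt2\,x_2\rVert^2$ in $E(\overline K)\otimes\R$, and $x_1+\sqrt2\,x_2=0$ with $x_1,x_2$ rational vectors forces $x_1,x_2$ torsion because $\Q$-linearly independent points stay $\R$-linearly independent after tensoring. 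So the mechanism of your Step 2 does not compute the radical here, and any nonzero $B$ it might suggest would make the claimed equality false. Your closing remark that ``the argument below is identical, working with $\NS(-)_\R$'' is therefore not justified; this rationality issue is precisely the content that has to be supplied.

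The argument can be repaired along your lines, but it needs a rationality input rather than a formal transcription: write $D'=\sum_i r_iD_i$ with $r_1,\dots,r_m$ linearly independent over $\Q$ and $D_i$ integral symmetric divisors. Then for $x\in A(\overline K)$ the radical condition $\langle x,y\rangle_{D'}=0$ for all $y$ is equivalent (by symmetry of $\phi_{D'}$ and nondegeneracy mod torsion of the canonical pairing, extended $\R$-bilinearly) to $\sum_i r_i\,\phi_{D_i}(x)=0$ in $\widehat A(\overline K)\otimes\R$; since each $\phi_{D_i}(x)$ lies in the $\Q$-structure $\widehat A(\overline K)\otimes\Q$ and the $r_i$ are $\Q$-independent, this forces every $\phi_{D_i}(x)$ to be torsion. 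Hence the zero set is $\bigcap_i\phi_{D_i}^{-1}\bigl(\Tor(\widehat A(\overline K))\bigr)$, and taking $B=\bigl(\bigcap_i\ker\phi_{D_i}\bigr)^0$ (a genuine abelian subvariety, proper because $D'\not\equiv0$ forces some $D_i\not\equiv0$), your divisibility argument from Step 3 gives exactly $B(\overline K)+\Tor(A(\overline K))$. With that replacement for Step 2 the proof is complete; as written, it is not.
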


The following decomposition of a self-isogeny on an abelian variety is also needed in Section \ref{sec_ab}.

\begin{lem}[cf.~{\cite[Proof of Theorem 2]{Sil17}}]\label{lem_sil}
Let $A$ be an abelian variety and  $\phi: A \to A$ an isogeny.
Then we can take two abelian subvarieties $A_1,A_2$ of $A$ such that we have:
\begin{itemize}
\item
The addition map $\mu: A_1 \times A_2 \to A$ is an isogeny.
\item
$A_i$ is $\phi$-invariant for $i=1,2$.
We set $\phi_i=\phi|_{A_i}$.
\item
$1_{A_1} -\phi_{1}: A_1 \to A_1$ is surjective.
\item
$\delta_{\phi_2}=1$.
\end{itemize}
\end{lem}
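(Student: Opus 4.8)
The plan is to carry out the decomposition indicated in \cite[Proof of Theorem 2]{Sil17}, splitting $A$ up to isogeny according to a coprime factorization of the characteristic polynomial of $\phi$. Let $P(t)\in\Z[t]$ be the characteristic polynomial of $\phi$ acting on the rational Tate module $V_\ell(A)$; it is monic of degree $2\dim A$, and since $\phi$ is an isogeny $P(0)\neq 0$, so $0$ is not a root. Factor $P$ into monic irreducible factors in $\Z[t]$ (Gauss's lemma). By Kronecker's theorem a nonzero algebraic integer all of whose conjugates have absolute value $\leq 1$ is a root of unity, so each irreducible factor of $P$ either has all its roots on the unit circle --- in which case they are roots of unity --- or has a root of absolute value $>1$. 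Let $P_2$ be the product of the factors of the first kind (with their multiplicities in $P$) and $P_1$ the product of those of the second kind, so that $P=P_1P_2$ with $\gcd(P_1,P_2)=1$. Since $t-1$ is of the first kind, it divides $P_2$ whenever it divides $P$; in particular $P_1(1)\neq 0$.

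Next I would convert this factorization into a decomposition of $A$. By the Cayley--Hamilton relation for abelian varieties, $P(\phi)=0$ in $\End(A)$. Choosing $u_1,u_2\in\Q[t]$ with $u_1P_1+u_2P_2=1$ and setting $\pi_1=u_2(\phi)P_2(\phi)$ and $\pi_2=u_1(\phi)P_1(\phi)$ in $\End^0(A):=\End(A)\otimes\Q$, one checks $\pi_1+\pi_2=1$ and $\pi_1\pi_2=u_1u_2(\phi)\,P(\phi)=0$, so $\pi_1,\pi_2$ are orthogonal idempotents of $\End^0(A)$ lying in the commutative subalgebra $\Q[\phi]$. By the standard dictionary between idempotents of $\End^0(A)$ and abelian subvarieties furnished by Poincaré's complete reducibility theorem, $A_i:=\Img(N\pi_i)$ (for any $N\in\Z_{>0}$ with $N\pi_i\in\End(A)$) is an abelian subvariety independent of $N$, and the addition map $\mu\colon A_1\times A_2\to A$ is an isogeny; since $\pi_i$ commutes with $\phi$ we have $\phi(A_i)\subseteq A_i$, and we set $\phi_i=\phi|_{A_i}$. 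Finally, as $P=P_1P_2$ is a coprime factorization annihilating $\phi$ on $V_\ell(A)$, one has $V_\ell(A)=\ker P_1(\phi)\oplus\ker P_2(\phi)$, and one identifies $V_\ell(A_i)=\ker P_i(\phi)$ (both inclusions are immediate and the dimensions add up), so that the characteristic polynomial of $\phi_i$ on $V_\ell(A_i)$ equals $P_i$.

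The two remaining assertions then follow at once. First, $\deg(1_{A_1}-\phi_1)=\det\bigl((1-\phi_1)\mid V_\ell(A_1)\bigr)=\prod_\alpha(1-\alpha)=P_1(1)\neq 0$, where $\alpha$ runs over the eigenvalues of $\phi_1$; hence $1_{A_1}-\phi_1$ is an isogeny, in particular surjective. Second, the eigenvalues of $\phi_2$ on $V_\ell(A_2)$ are the roots of $P_2$ and so lie on the unit circle; the same then holds for $\phi_2^*$ on $H^1(A_2)$, and therefore all eigenvalues of $\phi_2^*$ on $H^2(A_2)=\wedge^2 H^1(A_2)$ have absolute value $1$. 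By Theorem~\ref{thm_eigen}(1), $\delta_{\phi_2}$ occurs as an eigenvalue of $\phi_2^*$ acting on $\NS(A_2)_\R\subseteq H^2(A_2)$; since $\delta_{\phi_2}$ is a positive real number, $\delta_{\phi_2}=1$.

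The only step that is not purely formal is the passage from the orthogonal idempotents $\pi_1,\pi_2$ to an honest isogeny decomposition of $A$ with control on the characteristic polynomials of the restricted maps; this is precisely where Poincaré's complete reducibility theorem enters, and it is the ingredient invoked in \cite{Sil17}. Everything else amounts to sorting the eigenvalues of $\phi$ into those on and off the unit circle, with the observation --- crucial for the third bullet --- that the eigenvalue $1$ is safely absorbed into the unit-circle part.
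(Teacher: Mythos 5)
Your proof is correct, and it is essentially the argument the paper relies on: the paper gives no proof of its own but defers to Silverman's proof of Theorem~2 in \cite{Sil17}, which likewise splits the characteristic polynomial of $\phi$ into the product of the factors whose roots are roots of unity and the complementary part, and converts this coprime factorization into an isogeny decomposition $A\sim A_1\times A_2$ on which $1-\phi_1$ is an isogeny and $\phi_2$ has unit-circle eigenvalues, hence $\delta_{\phi_2}=1$. Your only (harmless) deviation is bookkeeping: you realize the splitting via the orthogonal idempotents $u_2(\phi)P_2(\phi)$, $u_1(\phi)P_1(\phi)$ in $\End^0(A)$ rather than directly via images/kernels of $P_i(\phi)$, which is an equivalent packaging of the same decomposition.
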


\section{General case}\label{sec_gen}
In this section, we show that there are densely many $\overline K$-rational points
of maximal arithmetic degree for arbitrary endomorphisms on projective varieties.

We use the following lemma several times in the rest of this paper.

\begin{lem}\label{lem_key}
Let $X$ be a quasi-projective variety and $f: X \to X$ a dominant morphism.
Take an intermediate field $K \subset L \subset \overline K$.
Let (P) and (Q) be  conditions on a point of $X(L)$.
Assume that (P) implies (Q).
Suppose the following.

\vspace{0.15in}
\noindent \rm ($*$): \it Given finitely many points $x_1,\ldots,x_k \in X(L)$ satisfying (Q)
and a proper closed
subset $Y \subset X_{\overline K}$,
there exists a point $x \in X(L)$ such that
$x$ satisfies (P), $x \not\in Y$, and
$O_f(x) \cap O_f(x_i)=\varnothing$ for $1 \leq i \leq k$.
\vspace{0.15in}

Then there is a subset $S \subset X(L)$ such that
\begin{itemize}
\item[(1)]
$S$ is Zariski dense.
\item[(2)]
Every point of $S$ satisfies (P).
\item[(3)]
$O_f(x)\cap O_f(y)=\varnothing$ for any distinct two points $x,y \in S$.
\end{itemize}
\end{lem}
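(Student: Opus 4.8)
The plan is to run a straightforward diagonalization, building $S$ one point at a time. The key preliminary observation is that $\overline K$, being algebraic over the number field $K$, is a \emph{countable} field; hence $X_{\overline K}$, which is quasi-compact and therefore covered by finitely many affine opens whose coordinate rings are countable Noetherian (in a countable Noetherian ring every ideal is finitely generated, so there are only countably many ideals), has only countably many closed subsets. Thus I can fix an enumeration $Y_1, Y_2, Y_3, \ldots$ of the proper closed subsets of $X_{\overline K}$.

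Next I would construct a sequence of points $x_1, x_2, \ldots \in X(L)$ by induction on $n$, maintaining the invariant that each $x_i$ satisfies (P), that $x_i \notin Y_i$, and that $O_f(x_i) \cap O_f(x_j) = \varnothing$ whenever $i \neq j$. For the base case, apply hypothesis ($*$) with the empty collection of points (which vacuously satisfies (Q)) and the proper closed subset $Y_1$; this yields a point $x_1 \in X(L)$ satisfying (P) with $x_1 \notin Y_1$. For the inductive step, suppose $x_1, \ldots, x_k$ have been constructed. Since (P) implies (Q), all of $x_1, \ldots, x_k$ satisfy (Q), so ($*$) applies with these $k$ points and the proper closed subset $Y_{k+1}$, producing $x_{k+1} \in X(L)$ which satisfies (P), lies outside $Y_{k+1}$, and has $O_f(x_{k+1}) \cap O_f(x_i) = \varnothing$ for every $1 \leq i \leq k$. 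Together with the inductive hypothesis that the orbits of $x_1, \ldots, x_k$ are already pairwise disjoint, this gives that the orbits of $x_1, \ldots, x_{k+1}$ are pairwise disjoint, so the invariant persists.

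Finally, set $S = \{x_n \mid n \geq 1\}$. Condition (2) is immediate since every $x_n$ satisfies (P). For (3): if $x, y \in S$ are distinct, pick indices $m, n$ with $x = x_m$ and $y = x_n$; as $x \neq y$ we have $m \neq n$, and then $O_f(x) \cap O_f(y) = \varnothing$ by the invariant. For (1): if $S$ were contained in some proper closed subset of $X_{\overline K}$, that subset would equal $Y_n$ for some $n$ in the enumeration, contradicting $x_n \in S$ together with $x_n \notin Y_n$; hence $S$ is Zariski dense.

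The argument is essentially routine, and I do not expect a genuine obstacle; the only point requiring care is the countability of the set of proper closed subsets of $X_{\overline K}$, which is what licenses the diagonal construction and ultimately rests on $\overline K$ being countable. (A Zorn's lemma / maximality variant is conceivable, but since ($*$) only produces one new point at a time against a \emph{finite} list, the inductive construction above is the natural fit.) The second thing to watch is the bookkeeping in the inductive step: ($*$) only guarantees disjointness of the newly chosen orbit from the previously chosen ones, so one must explicitly invoke the inductive hypothesis to conclude that \emph{all} pairs among $x_1, \ldots, x_{k+1}$ have disjoint orbits.
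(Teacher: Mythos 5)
Your proof is correct and follows essentially the same inductive diagonalization as the paper: enumerate the (countably many) proper closed subsets $Y_1, Y_2, \ldots$ of $X_{\overline K}$ and repeatedly apply ($*$), using that (P) implies (Q), to produce $x_{k+1} \notin Y_{k+1}$ with orbit disjoint from those of $x_1,\ldots,x_k$. Your explicit justification of the countability of the set of closed subsets is a detail the paper leaves implicit, but otherwise the arguments coincide.
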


\begin{proof}
Let $\{Y_i\}_{i=1}^\infty$ be the set of all proper closed subsets of $X_{\overline K}$.
By assumption, there is $x_1 \in X(L)$ satisfying (P) and $x_1 \not\in Y_1$.

Assume that we have $x_1, \ldots, x_k \in X(L)$ satisfying
\begin{itemize}
\item[(i)]
$x_i \not\in Y_i$ for $1 \leq i \leq k$.
\item[(ii)]
$x_i$ satisfies (P) for $1 \leq i \leq k$.
\item[(iii)]
$O_f(x_i)\cap O_f(x_j)=\varnothing$ for $1 \leq i,j \leq k$ with $i \neq j$.
\end{itemize}
Then ($*$) implies that there is $x_{k+1} \in X(L)$ satisfying (P) such that
$x_{k+1} \not\in Y_{k+1}$
and $O_f(x_{k+1}) \cap O_f(x_i)=\varnothing$ for $1 \leq i \leq k$.
Continuing this process, we obtain $S=\{x_i\}_{i=1}^\infty$ satisfying all the claims.
\end{proof}

This formulation enables us to replace a given self-morphism to its iteration.

\begin{lem}\label{lem_it}
Notation is as in Lemma \ref{lem_key}.
Further we assume that (Q) is preserved under the action of $f$.
If $(X,f^m)$ satisfies ($*$) for some $m>0$,
then $(X,f)$ also satisfies ($*$).
\end{lem}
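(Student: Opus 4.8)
The plan is to deduce condition ($*$) for $(X,f)$ directly from condition ($*$) for $(X,f^m)$, using the hypothesis that (Q) is preserved by $f$. So fix finitely many points $x_1,\ldots,x_k\in X(L)$ satisfying (Q) for $(X,f)$, and a proper closed subset $Y\subset X_{\overline K}$. First I would observe that the orbit of $x_i$ under $f$ is the union of $m$ orbits under $f^m$, namely $O_f(x_i)=\bigcup_{j=0}^{m-1} O_{f^m}(f^j(x_i))$; since (Q) is preserved under $f$, each of the finitely many points $f^j(x_i)$ ($1\le i\le k$, $0\le j\le m-1$) again satisfies (Q), now viewed as a condition for the dynamical system $(X,f^m)$ — here I am using that the condition (Q) is intrinsic to the point and does not reference the particular map, or more precisely that being preserved by $f$ makes it automatically a legitimate "(Q) for $f^m$" hypothesis. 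Apply ($*$) for $(X,f^m)$ to this enlarged finite collection $\{f^j(x_i)\}$ and the same closed set $Y$: we obtain a point $x\in X(L)$ satisfying (P), with $x\notin Y$, and with $O_{f^m}(x)\cap O_{f^m}(f^j(x_i))=\varnothing$ for all $i,j$.

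It remains to check that this same $x$ witnesses ($*$) for $(X,f)$. The conditions $x\in X(L)$, (P), and $x\notin Y$ are immediate. For the orbit-disjointness, suppose toward a contradiction that $O_f(x)\cap O_f(x_i)\neq\varnothing$ for some $i$, i.e. $f^a(x)=f^b(x_i)$ for some $a,b\ge 0$. Writing $a=a'm+r$ and $b=b'm+s$ with $0\le r,s<m$, I would massage this into an equality of $f^m$-orbit elements. The cleanest way: from $f^a(x)=f^b(x_i)$ apply $f^{m-r}$ (or suitable further iterates of $f^m$) to both sides so that the exponent on the left becomes a multiple of $m$; then the left side lies in $O_{f^m}(x)$ and the right side lies in $O_{f^m}(f^{t}(x_i))$ for some $0\le t<m$, contradicting the disjointness just obtained. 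This forces $O_f(x)\cap O_f(x_i)=\varnothing$ for every $i$, so $x$ satisfies all the requirements of ($*$) for $(X,f)$, completing the proof.

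The only genuinely delicate point is the bookkeeping in the last step: reducing a collision $f^a(x)=f^b(x_i)$ in the $f$-orbits to a collision among $f^m$-orbits of the points $f^j(x_i)$. This is purely a matter of writing exponents modulo $m$ and applying $f^m$ enough times, but one must be careful that the shift $t$ appearing on the right is indeed among $\{0,1,\ldots,m-1\}$ so that $f^t(x_i)$ is one of the points fed into ($*$). I do not anticipate any conceptual obstacle beyond this indexing care, since the hypothesis "(Q) is preserved under $f$" is exactly what is needed to legitimately treat the auxiliary points $f^j(x_i)$ as valid inputs to the $f^m$-version of ($*$).
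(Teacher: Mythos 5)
Your proposal is correct and follows essentially the same route as the paper: apply ($*$) for $(X,f^m)$ to the enlarged collection $\{f^j(x_i)\}_{0\le j\le m-1}$ (legitimate since (Q) is preserved by $f$) and then convert any collision of $f$-orbits into a collision of $f^m$-orbits. The only difference is that you spell out the exponent bookkeeping that the paper dismisses as "clear," and your handling of it (applying $f^{m-r}$ and checking $0\le t<m$ with a non-negative $f^m$-exponent) is sound.
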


\begin{proof}
Take finitely many points $x_1,\ldots,x_k \in X(L)$ satisfying (Q)
and a proper closed
subset $Y \subset X_{\overline K}$.
Then $f^j(x_i)$ satisfies (Q) for $0 \leq j \leq m-1$, $1 \leq i \leq k$ since
(Q) is preserved under the action of $f$.
By assumption, there is $x \in X(L)$ satisfying (P) such that
$x \not\in Y$ and
$O_{f^m}(x) \cap O_{f^m}(f^j(x_i))=\varnothing$ for $1 \leq i \leq k$, $0 \leq j \leq m-1$.
Then it is clear that $O_f(x) \cap O_f(x_i)=\varnothing$ for $1 \leq i \leq k$.
\end{proof}

%

\begin{lem}\label{lem_dense}
Let $X$ be a projective variety and
$D \not\equiv 0$ a nef $\R$-Cartier divisor on $X$.
Take $A>0$ and $B \in \R$, and let
$T(D,A,B)$ be the set
of the points $x \in X(\overline K)$ such that $[K(x):K]$ has a prime number $p>A$ as a factor and $h_D(x)>B$.
Then $T(D,A,B)$ is Zariski dense.
\end{lem}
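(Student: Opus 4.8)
The idea is to reduce the statement to the case of a curve and then construct the required points by hand. Fix once and for all a very ample divisor $H$ on $X$ defined over $K$, and set $n=\dim X$. Since $D$ is nef and $D\not\equiv 0$, the intersection number $D\cdot H^{n-1}$ is strictly positive (a standard consequence of the Hodge index theorem: a nef $\mathbb R$-divisor meeting some power of an ample class trivially is numerically trivial). Now fix an arbitrary proper closed subset $V\subsetneq X_{\overline K}$; it suffices to exhibit one point of $T(D,A,B)$ outside $V$. By Bertini's theorem and the density of $K$-rational points in a projective space, choose a complete intersection $C=H_1\cap\dots\cap H_{n-1}$ of members of $|H|$, defined over $K$, such that $C$ is a geometrically integral curve with $C_{\overline K}\not\subseteq V$ (for $n=1$ take $C=X$); then $D\cdot C=D\cdot H^{n-1}>0$.

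Let $\nu\colon\widetilde C\to C\hookrightarrow X$ be the normalization of $C$ composed with the inclusion, a smooth projective geometrically integral curve over $K$ with $\deg_{\widetilde C}\nu^{*}D=D\cdot C>0$, so that $\nu^{*}D$ is an ample $\mathbb R$-divisor. Pick a non-constant $t\in K(\widetilde C)$, giving a finite surjective $K$-morphism $\pi\colon\widetilde C\to\mathbb P^{1}_{K}$, and choose $c\in\mathbb Q_{>0}$ with $c<\deg_{\widetilde C}(\nu^{*}D)/\deg\pi$, so that $\nu^{*}D-c\,\pi^{*}\mathcal O_{\mathbb P^{1}}(1)$ has positive degree, hence is ample. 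Functoriality and positivity of height functions then give, for every $\widetilde z\in\widetilde C(\overline K)$,
$$h_{D}(\nu(\widetilde z))=h_{\nu^{*}D}(\widetilde z)+O(1)\ \geq\ c\,h(\pi(\widetilde z))+O(1),$$
where $h$ is a standard height on $\mathbb P^{1}$. Let $B'\subseteq\widetilde C(\overline K)$ be the finite set consisting of the points at which $\nu$ is not a local isomorphism together with $\nu^{-1}(V)$; then $\pi(B')$ is finite, and for $\widetilde z\notin B'$ one has $K(\nu(\widetilde z))=K(\widetilde z)$ and $\nu(\widetilde z)\notin V$.

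It remains to produce suitable points on $\mathbb P^{1}$. Fix a prime $p>A$ and a number field $L\supset K$ with $[L:K]=p$ (e.g.\ generated by a root of an Eisenstein polynomial), and choose $\theta\in L\setminus K$. The elements $m\theta$, $m\in\mathbb Z_{>0}$, are pairwise distinct points of $\mathbb A^{1}(L)\subseteq\mathbb P^{1}(\overline K)$ with $[K(m\theta):K]=p$; since they have degree $\leq p$ over $K$, the Northcott property forces $h(m\theta)\to\infty$ along some subsequence. Choose $m$ in that subsequence with $m\theta\notin\pi(B')$ and $c\,h(m\theta)+O(1)>B$, and pick any $\widetilde z\in\widetilde C(\overline K)$ with $\pi(\widetilde z)=m\theta$ (possible since $\pi$ is surjective). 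Setting $x=\nu(\widetilde z)$, we have $x\notin V$, while $h_{D}(x)\geq c\,h(m\theta)+O(1)>B$, and $[K(x):K]=[K(\widetilde z):K]$ is divisible by $[K(\pi(\widetilde z)):K]=p>A$. Hence $x\in T(D,A,B)\setminus V$, and since $V$ was arbitrary, $T(D,A,B)$ is Zariski dense.

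The step I expect to require the most care is the initial reduction: one needs that a nef, numerically nontrivial divisor has positive intersection with $H^{n-1}$ --- so that the complete intersection curves, which sweep out $X$, are not contracted by $D$ --- and one must keep $C$, the map $\pi$, and the auxiliary field $L$ all defined over $K$ so that the degrees $[K(x):K]$ are computed correctly. Granting these, the height comparison on $\widetilde C$ and the bookkeeping of residue-field degrees along $\nu$ and $\pi$ are routine.
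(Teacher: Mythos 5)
Your proof is correct and follows essentially the same route as the paper: cut $X$ down to a general complete-intersection curve not contained in the given closed set, pass to the normalization, compare heights via a finite morphism to $\PP^1_K$, and produce infinitely many points of prime degree $p>A$ with large height using the Northcott property. The only differences are cosmetic bookkeeping (you use a height inequality from an ample difference and delete a finite bad locus, where the paper pulls back a divisor relation and absorbs the residue-degree bound by replacing $A$ with $eA$).
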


\begin{proof}
Take any proper closed subset $Y$ of $X_{\overline K}$.
Take very ample divisors $H_1, \ldots,H_{\dim X-1}$ such that
$C=H_1 \cap \cdots \cap H_{\dim X-1} \not\subset Y$.
Restricting $D$ to $C$, we may assume that $X$ is a projective curve and $D$ is ample,
and it is sufficient to show that $T(D,A,B)$ is an infinite set.
Now we can write $D=\sum_i a_i D_i$ with $a_i>0$ and $D_i$ being ample Cartier divisors,
so we may assume that $D$ is an ample Cartier divisor.

Let $\nu: X^\nu \to X$ be the normalization of $X$.
We can take $e>0$ such that $[K(x'):K(\nu(x'))] \leq e$ for any $x' \in X^\nu(\overline K)$.
Choose a height function $h_{\nu^*D}$ associated to $\nu^*D$ as $h_{\nu^*D}=h_D \circ \nu$.
Take any $x' \in T(\nu^*D,eA,B)$.
Then
$$[K(\nu(x')):K]=\frac{[K(x'):K]}{[K(x'):K(\nu(x'))]}$$
still has a prime factor $p>eA$ and $h_D(\nu(x'))=h_{\nu^*D}(x')>B$,
so $\nu(x') \in T(D,A,B)$.
Therefore we have $\nu(T(\nu^*D,eA,B)) \subset T(D,A,B)$.
So we may assume that $X$ is a smooth projective curve.

Take a finite surjective morphism $f: X \to \mathbb P^1$
with $\pi^*\mathcal O(1) \sim mD$ for some $m>0$.
Take $e'>0$ such that $[K(x):K(f(x))] \leq e'$ for any $x \in X(\overline K)$.
Now we have $m h_D=h \circ f+O(1)$, where $h=h_{\mathcal O(1)}$ is the naive height function on $\PP^1$.
So there is $M>0$ such that $|mh_D - h \circ f| \leq M$.
As in the previous paragraph, we have
$f^{-1}(T(\mathcal O(1),A,mB+M)) \subset T(D,A,B)$.
Therefore we may assume that $X=\mathbb P^1$, $D=\mathcal O(1)$,
and $h_D=h$.

Take $\alpha \in \overline K$ such that $[\Q(\alpha):\Q]=p$ where
$p$ is a prime number with $p > A$ and $p>m=[K:\Q]$.
Then $p \mid [K(\alpha):K]$ since $m<p$.
But now $[K(\alpha):K] \leq [\Q(\alpha):\Q]=p$, so $[K(\alpha):K]=p$.
There are infinitely many points in $\A^1(K(\alpha)) \setminus \A^1(K)=K(\alpha) \setminus K$ and
$[K(x):K]=p$ for any $x \in K(\alpha) \setminus K$.
So there are infinitely many $x \in \A^1(K(\alpha))$ such that
$[K(x):K]=p>A$ and $h(x)>B$ by the Northcott finiteness property.
\end{proof}

\begin{proof}[Proof of Theorem \ref{thmA}]
Taking the normalization, we may assume that $X$ is normal.
Let $D$ be a nef $\R$-Cartier divisor on $X$ such that $D \not\equiv 0$ and
$f^*D \sim_{\R} \delta_f D$.
Then we have a canonical height function $\widehat h_{D,f}$ (cf.~Theorem  \ref{thm_canht}).
According to Lemma \ref{lem_key},
it is sufficient to show the following claim,
where we set 
\begin{itemize}
\item $x \in X(\overline K)$ satisfies (P) if $\widehat h_{D,f}(x)>0$.
\item (Q) is unconditional.
\end{itemize}
Note that $\widehat h_{D,f}(x)>0$ implies $\alpha_f(x)=\delta_f$ (cf.~Theorem \ref{thm_canht} (4)).

\begin{claim}
Given finitely many points $x_1,\ldots,x_k \in X(\overline K)$
and a proper closed
subset $Y \subset X_{\overline K}$,
there exists a point $x \in X(\overline K)$ such that
$\widehat h_{D,f}(x)>0$, $x \not\in Y$, and
$O_f(x) \cap O_f(x_i)=\varnothing$ for $1 \leq i \leq k$.
\end{claim}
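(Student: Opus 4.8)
The plan is to exhibit the point on a suitable curve inside $X$, exploiting the fact that $\widehat h_{D,f}$ takes values that are ``spread out'' near infinity, whereas the orbits of the $x_i$ impose only a sparse constraint. Note first that $\widehat h_{D,f}(x)>0$ already yields $\alpha_f(x)=\delta_f$, by Theorem \ref{thm_canht}(4) together with Theorem \ref{thm_mat}. Next, by Theorem \ref{thm_canht}(3), if $f^a(x)=f^b(x_i)$ for some $a,b\ge 0$, then $\delta_f^a\widehat h_{D,f}(x)=\delta_f^b\widehat h_{D,f}(x_i)$; so if $\widehat h_{D,f}(x)>0$, then $\widehat h_{D,f}(x_i)>0$ and $\widehat h_{D,f}(x)\in\widehat h_{D,f}(x_i)\,\delta_f^{\Z}$. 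Hence it suffices to find $x\in X(\overline K)\setminus Y$ with
$$\widehat h_{D,f}(x)\notin\{0\}\cup\bigcup_{i=1}^{k}\widehat h_{D,f}(x_i)\,\delta_f^{\Z}.$$
The forbidden set is $\{0\}$ together with finitely many geometric progressions of common ratio $\delta_f>1$; passing to logarithms it becomes a set of period $\log\delta_f$ with boundedly many points per period, so its complement contains intervals of length tending to infinity.

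Now I would pass to a curve. Since $D$ is nef with $D\not\equiv 0$ we have $D\cdot H^{\dim X-1}>0$ for an ample $H$ (this is where $D\not\equiv 0$ enters), so a general complete intersection curve $C\subseteq X$ of members of $|mH|$ ($m\gg0$) is a smooth projective curve with $C\not\subseteq Y$ and $D\cdot C>0$ (if $\dim X=1$ take $C=X$, which is smooth as $X$ is normal). Then $D|_C$ is ample, so $m_0 D|_C$ is very ample for some $m_0>0$; composing the corresponding embedding with a general linear projection onto $\PP^1$ gives a finite surjective morphism $g\colon C\to\PP^1$ with $g^{*}\mathcal O_{\PP^1}(1)=m_0\,D|_C$. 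Choosing all height functions compatibly and using Theorem \ref{thm_canht}(2) together with functoriality of heights,
$$\widehat h_{D,f}(x)=\tfrac1{m_0}\,h\bigl(g(x)\bigr)+O(1)\qquad(x\in C(\overline K)),$$
where $h$ denotes the naive height on $\PP^1$.

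For each positive integer $\ell$ choose $y_\ell\in g^{-1}([\ell:1])(\overline K)$; then $\widehat h_{D,f}(y_\ell)=\tfrac1{m_0}\log\ell+O(1)$, and $y_\ell\notin Y$ for all but finitely many $\ell$, because $g(Y\cap C)$ is finite. As $\ell\to\infty$ the numbers $\tfrac1{m_0}\log\ell$ become arbitrarily finely spaced, so the values $\widehat h_{D,f}(y_\ell)$ meet every sufficiently high real interval whose length exceeds twice the implied $O(1)$ constant. Since the forbidden set has gaps of length $\to\infty$, choosing $\ell$ large we may place $\widehat h_{D,f}(y_\ell)$ inside one of these gaps while keeping $\ell$ outside the exceptional finite set; setting $x=y_\ell$ gives $x\notin Y$, $\widehat h_{D,f}(x)>0$ (hence $\alpha_f(x)=\delta_f$), and $\widehat h_{D,f}(x)$ outside the forbidden set, so $O_f(x)\cap O_f(x_i)=\varnothing$ for all $i$ by the first paragraph. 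This proves the Claim.

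The main obstacle is conceptual rather than computational: the set of $x$ whose forward orbit meets some $O_f(x_i)$ can itself be Zariski dense — already for a morphism of degree $>1$ on $\PP^1$, through the many preimages of a single point — so one cannot simply intersect a Zariski-dense ``good'' set with the complement of a ``thin'' bad set. What makes the argument work is that $\widehat h_{D,f}$ restricted to $C(\overline K)$ is dense-up-to-$O(1)$ in a half-line, and such a set cannot be confined to finitely many geometric progressions of a common ratio $>1$, whose spacing eventually exceeds any fixed bound. A secondary technical point is arranging $g^{*}\mathcal O_{\PP^1}(1)$ to be an honest multiple of $D|_C$, rather than merely numerically equivalent to one, which is exactly what keeps the transported estimate within $O(1)$.
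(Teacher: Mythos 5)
Your overall strategy is genuinely different from the paper's: the paper proves this Claim by a field-degree argument (choose, via Lemma \ref{lem_dense}, a point $x$ with $\widehat h_{D,f}(x)>0$ whose degree $[K(x):K]$ is divisible by a prime larger than all relative degrees $[K(y):K(f(y))]$ and all $[K(x_i):K]$, which makes a collision $f^N(x)=f^M(x_i)$ numerically impossible), whereas you transplant the height-spacing argument of Theorem \ref{thm_unirat2} from $\PP^1$ to a general complete intersection curve $C\subset X$. Your first reduction is fine: by Theorem \ref{thm_canht}(3), a collision forces $\widehat h_{D,f}(x)\in\widehat h_{D,f}(x_i)\delta_f^{\Z}$, so it suffices to land $\widehat h_{D,f}(x)$ in one of the (linearly growing) gaps of finitely many geometric progressions.

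The genuine gap is in the step you yourself single out as the technical crux: producing a finite morphism $g\colon C\to\PP^1$ with $g^{*}\mathcal O(1)=m_0 D|_C$ (or even $\sim_{\R}$ a real multiple of $D|_C$). The divisor $D$ is only an $\R$-Cartier class with $f^{*}D\sim_{\R}\delta_f D$, and $\delta_f$ is in general irrational, so $D$ cannot be replaced by a $\Q$-divisor; ``very ample'' does not make sense for $m_0D|_C$, and on a curve of positive genus an $\R$-divisor class of positive degree need not be $\R$-linearly equivalent to any real multiple of an integral (very) ample divisor, since $\Pic^0(C)\otimes\R$ is typically nonzero. Having only equal degree gives numerical/algebraic equivalence, and then the comparison between $\widehat h_{D,f}|_C$ and $\frac{1}{m_0}h\circ g$ is not $O(1)$ but only $O(\sqrt{h}+1)$ (quasi-equivalence of heights); this is precisely why the paper's Theorem \ref{thm_unirat2} insists on mapping in a $\PP^1$, where $\Pic^0=0$ kills the discrepancy. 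So as written the construction of $g$ and the $O(1)$ estimate it is supposed to deliver can fail. The argument is repairable: take any finite $g\colon C\to\PP^1$, invoke the quasi-equivalence bound $|\,h_{D|_C}-\tfrac{\deg(D|_C)}{\deg g}\,h\circ g\,|=O(\sqrt{h\circ g}+1)$, and observe that the gaps of the forbidden set at height $T$ have length $\asymp T$ while the error is only $O(\sqrt{T})$, so the same pigeonhole goes through — but this extra input (or some substitute for it) is needed and is absent from your proof; alternatively, the paper's prime-degree trick avoids the issue entirely because $\overline K$-points of unbounded degree are allowed.
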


Take $A>0$ such that $[K(x):K(f(x))] \leq A$ for any $x \in X(\overline K)$ and $[K(x_i):K] \leq A$ for $1 \leq i \leq k$.
Then the set $T(D,A,0)$ is dense by Lemma \ref{lem_dense}.
So there is a point $x \in X(\overline K)$ such that
$[K(x):K]$ has a prime factor $p>A$, $\widehat h_{D,f}(x)>0$, and $x \not\in Y$.

Suppose $O_f(x) \cap O_f(x_l) \neq \varnothing$ for some $1 \leq l \leq k$.
Then there exist $N,M \in \Z_{>0}$ satisfying $f^N(x)=f^M(x_l)$.
Now we have
\begin{align*}
[K(x):K]
&=\left( \prod_{i=0}^{N-1} [K(f^i(x)):K(f^{i+1}(x))] \right) \cdot [K(f^N(x):K] \\
&=\left( \prod_{i=0}^{N-1} [K(f^i(x)):K(f^{i+1}(x))] \right) \cdot [K(f^M(x_l):K].
\end{align*}
Here $[K(f^i(x)):K(f^{i+1}(x))] \leq A$ for $0 \leq i \leq N-1$ and
$[K(f^M(x_l):K] \leq [K(x_l):K] \leq A$.
So $[K(x):K]$ does not have $p$ as a factor, which is a contradiction.
So $O_f(x) \cap O_f(x_l) = \varnothing$ for $1 \leq l \leq k$.
\end{proof}

\section{Unirational varieties}\label{sec_unirat}
In this section, we prove Theorem \ref{thm_unirat}.
In fact, we prove a bit more general statement here:

\begin{thm}\label{thm_unirat2}
Let $X$ be a projective variety and $f: X \to X$ a surjective morphism with $\delta_f>1$.
Assume the following condition:

\vspace{0.15in}
\noindent \rm (\dag): \it There is a nef $\R$-Cartier divisor $D$ on $X$ such that
$f^*D \sim_\R \delta_fD$ and
for any proper closed subset $Y \subset X_{\overline K}$,
there exists a morphism $g: \PP^1_K \to X$ such that $g(\PP^1_K) \not\subset Y$
and $g^*D$ is ample.
\vspace{0.15in}

\noindent Then $(X,f)$ has densely many $K$-rational points with maximal arithmetic degree.
\end{thm}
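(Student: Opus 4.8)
The plan is to apply Lemma~\ref{lem_key} with $L=K$, taking condition (P) on a point $x \in X(K)$ to be ``$\widehat h_{D,f}(x)>0$'' and condition (Q) to be unconditional; note that (P) implies $\alpha_f(x)=\delta_f$ by Theorem~\ref{thm_canht}~(4), so it suffices to verify the hypothesis ($*$) of Lemma~\ref{lem_key}. Thus, given finitely many points $x_1,\dots,x_k \in X(K)$ and a proper closed subset $Y \subset X_{\overline K}$, we must produce $x \in X(K)$ with $\widehat h_{D,f}(x)>0$, $x \notin Y$, and $O_f(x) \cap O_f(x_i)=\varnothing$ for all $i$.

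First I would use (\dag) to choose a morphism $g\colon \PP^1_K \to X$ with image not contained in the relevant closed subset and with $g^*D$ ample on $\PP^1_K$, hence $g^*D \sim_{\mathbb Q} c\,\mathcal O(1)$ for some $c>0$. The key point is that the canonical height $\widehat h_{D,f}$ differs from $h_D$ by a bounded function (Theorem~\ref{thm_canht}~(2)), so $\widehat h_{D,f}(g(t)) \geq c\cdot h(t) - M$ for some constant $M$ and the naive height $h$ on $\PP^1$. Consequently, for all but finitely many $t \in \PP^1(K)$ we have $\widehat h_{D,f}(g(t))>0$; combined with Theorem~\ref{thm_rate}, which guarantees $\PP^1(K)$ is infinite with the expected growth of points of bounded height, we get infinitely many points $x=g(t) \in X(K)$ lying off $Y$ with $\widehat h_{D,f}(x)>0$ — in fact with height as large as we wish.

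The main obstacle — and the only subtle part — is arranging the disjointness of orbits $O_f(x) \cap O_f(x_i)=\varnothing$, since now we are forced to stay over the base field $K$ and cannot use the prime-degree trick from the proof of Theorem~\ref{thmA}. Here I would argue by heights: the canonical height is compatible with $f$ via $\widehat h_{D,f}\circ f = \delta_f\,\widehat h_{D,f}$ (Theorem~\ref{thm_canht}~(3)), so along the orbit $O_f(x_i)$ the values of $\widehat h_{D,f}$ form the set $\{\delta_f^{\,n}\,\widehat h_{D,f}(x_i)\}_{n\geq 0}$, which is a fixed countable set once $x_1,\dots,x_k$ are chosen. If $O_f(x) \cap O_f(x_i) \neq \varnothing$, say $f^N(x)=f^M(x_i)$, then $\delta_f^{\,N}\,\widehat h_{D,f}(x) = \delta_f^{\,M}\,\widehat h_{D,f}(x_i)$, forcing $\widehat h_{D,f}(x)$ to lie in the countable set $\bigcup_i\{\delta_f^{\,m}\,\widehat h_{D,f}(x_i) : m\in\Z\}$ of positive reals. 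But as $t$ ranges over $\PP^1(K)$, the values $\widehat h_{D,f}(g(t)) \geq c\,h(t)-M$ take infinitely many distinct values (heights on $\PP^1(K)$ are unbounded with only finitely many points below any bound), so we can select $t$ with $\widehat h_{D,f}(g(t))$ avoiding that countable set while still lying off $Y$. This produces the desired $x$, ($*$) holds, and Lemma~\ref{lem_key} yields the Zariski-dense set $S \subset X(K)$ with pairwise disjoint orbits and maximal arithmetic degree.

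Finally, Theorem~\ref{thm_unirat} follows from Theorem~\ref{thm_unirat2}: if $X$ is unirational with a dominant $\phi\colon \PP^d_K \dashrightarrow X$, then for the divisor $D$ of Theorem~\ref{thm_eigen} (made to satisfy $f^*D \sim_{\mathbb R}\delta_f D$ by Theorem~\ref{thm_eigen}~(2) after normalizing, or handled directly) one checks (\dag) by pulling back and restricting $\phi$ to a general line in $\PP^d_K$ missing the indeterminacy locus and any prescribed $Y$, so that the composite $g\colon \PP^1_K \to X$ has $g^*D$ nef and nonzero, hence ample on the curve $\PP^1$.
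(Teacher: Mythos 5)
Your overall architecture matches the paper's: apply Lemma~\ref{lem_key} with (P) being $\widehat h_{D,f}(x)>0$, use (\dag) to get $g\colon\PP^1_K\to X$ with $g^*D$ ample and $g(\PP^1)\not\subset Y$, note $\widehat h_{D,f}(g(t))=r\,h(t)+O(1)$, and force orbit disjointness through the relation $f^N(x)=f^M(x_i)\Rightarrow \widehat h_{D,f}(x)\in\delta_f^{\Z}\,\widehat h_{D,f}(x_i)$. (Your choice of (Q) unconditional is harmless, since $\widehat h_{D,f}(x_i)=0$ cannot collide with $\widehat h_{D,f}(x)>0$.) But the decisive step is asserted, not proved: you say that because the values $\widehat h_{D,f}(g(t))$, $t\in\PP^1(K)$, are infinitely many and unbounded, you "can select $t$ avoiding that countable set." That inference is invalid. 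The set of achievable values $\{\widehat h_{D,f}(g(t)) : t\in\PP^1(K)\}$ is itself countable, and an infinite unbounded countable set can perfectly well be contained in (or only meet) the forbidden countable set $\bigcup_i\delta_f^{\Z}\,\widehat h_{D,f}(x_i)$; moreover $\widehat h_{D,f}(g(t))$ is only pinned down to within a bounded error $c$ of $r\,h(t)$, so even choosing $h(t)$ exactly does not let you dictate the value of $\widehat h_{D,f}(g(t))$. This is exactly the crux the paper has to work for.

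The paper closes this gap in two moves you are missing. First, it analyzes the structure of the forbidden set: on a logarithmic scale the values $\widehat h_{D,f}(f^m(x_i))$ (together with the finitely many values $\widehat h_{D,f}(x)$, $x\in g(\PP^1)\cap Y$, which is how $x\notin Y$ is arranged) form finitely many arithmetic progressions of common difference $\log\delta_f$ plus a finite set; hence for consecutive forbidden values $h_l<h_{l+1}$ with $h_l$ large, the ratio $h_{l+1}/h_l$ is bounded below by some $d'>1$, so the window $[\,h_l+c,\ h_{l+1}-c\,]$ is nonempty and grows. Second, it invokes Schanuel's counting theorem (Theorem~\ref{thm_rate}) to produce a point $a\in\PP^1(K)$ with $r\,h(a)$ inside that window, which forces $h_l<\widehat h_{D,f}(g(a))<h_{l+1}$; the $\log\delta_f$-periodicity of the forbidden set then guarantees that every forward iterate also avoids it. Without some argument of this quantitative kind (Schanuel, or at least an explicit family of $K$-points such as $[n:1]$ whose multiplicative heights hit every sufficiently long interval), your selection step does not go through, so as written the proposal has a genuine gap at its key point.
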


Theorem \ref{thm_unirat2} implies Theorem \ref{thm_unirat} due to the following lemma.

\begin{lem}\label{lem_p1}
Let $X$ be a unirational projective variety and $f: X \to X$ a surjective morphism with
$\delta_f>1$.
Then $(X,f)$ satisfies \rm (\dag) \it in Theorem \ref{thm_unirat2}.
\end{lem}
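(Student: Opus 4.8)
The plan is to take the divisor $D$ in (\dag) to be a nef eigendivisor of $f$, and to produce the required rational curves as images of general lines under a unirational parametrization. First I would reduce to the case $X$ normal: its normalization is again unirational (the parametrizing map $\PP^d_K\dashrightarrow X$ factors through it since $\PP^d_K$ is normal), $f$ lifts to it, and this reduction is harmless by Lemma~\ref{lem_cover} in view of Theorem~\ref{thm_unirat2}. For normal $X$, Theorem~\ref{thm_eigen}(1) and~(2) provide a nef $\R$-Cartier divisor $D$ with $D\not\equiv0$ and $f^* D\sim_\R\delta_f D$, and this will be the $D$ of (\dag). It then remains to show that for every proper closed subset $Y\subset X_{\overline K}$ there is a morphism $g\colon\PP^1_K\to X$ with $g(\PP^1_K)\not\subset Y$ and $\deg(g^* D)>0$ (equivalently, $g^* D$ ample on $\PP^1_K$).

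Unirationality gives a dominant rational map $\PP^d_K\dashrightarrow X$; restricting it to a general $K$-linear subspace of dimension $n=\dim X$ (such a subspace exists because $K$ is infinite), I may assume I have a dominant, generically finite rational map $\phi\colon\PP^n_K\dashrightarrow X$ with dense open domain $U\subseteq\PP^n_K$. For a line $\ell\subset\PP^n_K$ meeting $U$, the restriction $\phi|_\ell$ is a rational map from a smooth curve to a projective variety, hence extends to a morphism $g_\ell\colon\ell\cong\PP^1_K\to X$ with image the rational curve $\phi(\ell):=\overline{\phi(\ell\cap U)}$; it is defined over $K$ whenever $\ell$ is. These curves cover $X$, and the family is \emph{connecting}: two general points of $X$ lift via the dominant $\phi|_U$ to two points of $U$, which lie on a common line and hence on a common $\phi(\ell)$.

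The crux is that $D\cdot\phi(\ell)>0$ for general $\ell$. Since $D$ is nef, $D\cdot\phi(\ell)$ takes a fixed integer value $m\ge0$ for $\ell$ outside a proper closed subset of the Grassmannian of lines. If $m=0$, then $D$ would be numerically trivial on every member of a connecting covering family of curves, which forces $D\equiv0$ and contradicts $D\not\equiv0$; hence $m>0$. Granting this, condition (\dag) follows: $\overline{\phi^{-1}(Y)}$ is a proper closed subset of $\PP^n_K$ (as $\phi$ is dominant), so a general $K$-rational line $\ell$ joining two $K$-points of $U$ — such lines being dense in the Grassmannian, since $K$ is infinite — lies outside the bad locus above, meets $U$, and is not contained in $\overline{\phi^{-1}(Y)}$; then $\phi(\ell)\not\subset Y$ and $D\cdot\phi(\ell)=m>0$, and with $g=g_\ell$ the projection formula gives $\deg(g^* D)=(\deg g)\,(D\cdot\phi(\ell))>0$.

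The one substantial point — the step I expect to be the real obstacle — is the implication ``$m=0\Rightarrow D\equiv0$'', i.e.\ that a nonzero nef divisor cannot be numerically trivial on a connecting covering family of curves. I would prove it concretely by passing to a resolution $p\colon W\to\PP^n_K$ of $\phi$ with $\psi:=\phi\circ p\colon W\to X$ a (dominant, generically finite) morphism, so that $N:=\psi^* D$ is nef and $\psi_* N=(\deg\psi)\,D\not\equiv0$, hence $N\not\equiv0$. For a general line $\ell$ avoiding the centre of $p$ one has $\deg(g_\ell^* D)=N\cdot p^{-1}(\ell)=p_* N\cdot\ell$, so $m=0$ means exactly $p_* N\equiv0$; then $N$ is numerically equivalent to the $p$-exceptional divisor $N-p^* p_* N$, whence by the negativity lemma $-N$ is numerically equivalent to an effective divisor. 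Combined with $N$ nef this gives $N\cdot A^{n-1}=0$ for $A$ ample, so $N\equiv0$ by the Hodge index theorem — contradicting $N\not\equiv0$. (The same fact also follows from the theory of nef reduction maps.) All the remaining ingredients — genericity on the Grassmannian, extension of rational maps from smooth curves, and the projection formula — are routine.
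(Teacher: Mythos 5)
Your argument is correct, and it reaches the same conclusion by a slightly different arrangement of the same key ingredients. The paper also normalizes, takes the nef eigendivisor $D$ from Theorem \ref{thm_eigen}, and resolves the indeterminacy of $\phi\colon\PP^d_K\dashrightarrow X$, but then it works on $\PP^d_K$ itself: writing $\alpha^*\phi^*D=\beta^*D+E$ with $E=\alpha^*\alpha_*\beta^*D-\beta^*D$ effective by the negativity lemma \cite[Lemma 3.39]{KM98}, it concludes that $\phi^*D$ is pseudo-effective and numerically non-trivial, hence \emph{ample} because $\PP^d$ has Picard rank one; after that, \emph{any} line avoiding $I_\phi$ and not mapping into $Y$ does the job, with no genericity-of-intersection-numbers discussion needed. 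You instead first cut down to a general $\PP^n$ to make the parametrization generically finite, and rule out degree zero on images of general lines by a contradiction using $\psi_*\psi^*D=(\deg\psi)D$, the same negativity lemma, and the fact that a nef class with $N\cdot A^{n-1}=0$ is numerically trivial; this buys nothing extra here (you still use that classes on $\PP^n$ are determined by their degree on a line) and costs the extra reductions, but it is sound, and it isolates a reusable fact (a nonzero nef divisor cannot be trivial on a covering family of curves through a generically finite parametrization). Two harmless inaccuracies to fix if you write it up: $D$ is an $\R$-divisor, so $D\cdot\phi(\ell)$ is a nonnegative real number rather than an integer; and $\deg(g_\ell^*D)$ differs from $D\cdot\phi(\ell)$ by the factor $\deg(\ell\to\phi(\ell))$, which does not affect positivity but should not be conflated. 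Your normalization step, justified via Lemma \ref{lem_cover} together with Theorem \ref{thm_unirat2}, matches the paper's (implicit) reduction and is fine.
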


\begin{proof}
Taking normalization, we may assume that $X$ is normal.
Take a nef $\R$-Cartier divisor $D \not\equiv 0$ on $X$ such that $f^*D \sim_\R \delta_f D$.
Let $\phi: \PP^d_K \dashrightarrow X$ be a dominant rational map from a projective space.
Take a resolution of the indeterminacy of $\phi$:
\[
\xymatrix{
\tilde X \ar[rd]^{\beta} \ar[d]_{\alpha} \\
\PP^d_K \ar@{.>}[r]^\phi & X
}
\]
Then $E=\alpha^* \phi^*D -\beta^*D=\alpha^* \alpha_* \beta^*D -\beta^*D$ is effective by the negativity lemma (cf.~\cite[Lemma 3.39]{KM98}).
So $\alpha^*\phi^*D=\beta^*D+E$ is pseudo-effective and numerically non-trivial.
Hence $\phi^*D$ is pseudo-effective and numerically non-trivial, so ample.

As the intersection of general hyperplanes,
we can take a line $Z \subset \PP^d_{K}$
such that $Z \cap I_\phi=\varnothing$ and $\phi(Z) \not\subset Y$.
Set $g:\PP^1_K =Z \overset{\iota}{\hookrightarrow} \PP^d_{K} \overset{\phi}{\dashrightarrow} X$.
Then $g^*D=(\phi \circ \iota)^*D=\iota^*\phi^*D$ (cf.~\cite[Lemma 4.7]{MSS18b}),
so it is ample.
\end{proof}

\begin{proof}[Proof of Theorem {\ref{thm_unirat2}}]
Take a canonical height function $\widehat h_{D,f}$ associated to $D$.
According to Lemma \ref{lem_key},
it is sufficient to show the following claim, where we set 
\begin{itemize}
\item $x \in X(K)$ satisfies (P) if $\widehat h_{D,f}(x)>0$.
\item (Q) is same as (P).
\end{itemize}

\begin{claim}
Given finitely many points $x_1,\ldots,x_k \in X(K)$
satisfying $\widehat h_{D,f}(x_i)>0$
$(1 \leq i \leq k)$
and a proper closed
subset $Y \subset X_{\overline K}$,
there exists a point $x \in X(K)$ such that
$\widehat h_{D,f}(x)>0$, $x \not\in Y$, and
$O_f(x) \cap O_f(x_i)=\varnothing$ for $1 \leq i \leq k$.
\end{claim}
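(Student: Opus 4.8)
\emph{Strategy.} The plan is to produce all the required points as values $g(t)$, $t\in\PP^1(K)$, on a single well‑chosen rational curve $g\colon\PP^1_K\to X$, and to rule out orbit collisions by a height‑counting argument that exploits the hypothesis $\delta_f>1$: it forces the set of ``forbidden'' canonical‑height values to be geometrically sparse, so that only a negligible proportion of rational points on $\PP^1$ can be bad.

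\emph{Setup.} First I would apply hypothesis \rm(\dag)\it\ to the given proper closed set $Y$, obtaining $g\colon\PP^1_K\to X$ with $g(\PP^1_K)\not\subseteq Y$ and $g^*D$ ample; write $e:=\deg g^*D>0$. Since $g$ is defined over $K$, each $t\in\PP^1(K)$ gives $g(t)\in X(K)$. Let $h$ be the standard height on $\PP^1$, so that Theorem \ref{thm_rate} applies with $N=1$. Combining $g^*D\sim_{\mathbb R}\mathcal O(e)$, functoriality of heights, and $\widehat h_{D,f}=h_D+O(1)$ (Theorem \ref{thm_canht}(2)) produces a constant $C>0$ with $|\widehat h_{D,f}(g(t))-e\,h(t)|\le C$ for all $t$. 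Consequently $\widehat h_{D,f}(g(t))>0$ for all but finitely many $t$ and $g^{-1}(Y)$ is finite; so the only real issue is disjointness of orbits.

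\emph{The key observation.} If $O_f(g(t))\cap O_f(x_i)\neq\varnothing$, say $f^N(g(t))=f^M(x_i)$, then applying $\widehat h_{D,f}$ and using $\widehat h_{D,f}\circ f=\delta_f\,\widehat h_{D,f}$ (Theorem \ref{thm_canht}(3)) gives $\widehat h_{D,f}(g(t))=\delta_f^{\,M-N}\,\widehat h_{D,f}(x_i)$. Hence every ``bad'' $t$ has $\widehat h_{D,f}(g(t))$ in the countable set $\Lambda:=\bigcup_{i=1}^{k}\{\,\delta_f^{\,j}\widehat h_{D,f}(x_i):j\in\mathbb Z\,\}$, and therefore lies in one of the Northcott‑finite windows $W_\lambda:=\{t\in\PP^1(K):|e\,h(t)-\lambda|\le C\}$, $\lambda\in\Lambda$. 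Because each $\widehat h_{D,f}(x_i)>0$ (this is where the hypothesis on the $x_i$ enters) and $\delta_f>1$, the values $\delta_f^{\,j}\widehat h_{D,f}(x_i)$ are geometrically spaced; in particular, for any fixed $L$ the set $\bigcup_{i,j}\bigl[\tfrac{1}{e}(\delta_f^{\,j}\widehat h_{D,f}(x_i)-C),\,\tfrac{1}{e}(\delta_f^{\,j}\widehat h_{D,f}(x_i)+L)\bigr]$ has arbitrarily large numbers in its complement.

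\emph{The counting step and the obstacle.} I would then fix $L$ large and pick a large height bound $B$ with $\log B$ in that complement. Sparseness forces every bad $t$ with $H(t)<B$ to lie in some $W_\lambda$ with $\lambda\le e\log B-L$; Theorem \ref{thm_rate} gives $\#W_\lambda\ll e^{2\lambda/e}\ll e^{-2L/e}B^{2}$, and summing the $k$ rapidly convergent geometric tails (convergent because $\delta_f>1$) bounds the number of bad $t$ with $H(t)<B$ by $O(e^{-2L/e}B^{2})+O(1)$. Since Theorem \ref{thm_rate} also gives $\#\{t\in\PP^1(K):H(t)<B\}\sim C_0B^{2}$, choosing $L$ (hence $e^{-2L/e}$) small enough and $B$ large enough leaves $t\in\PP^1(K)$ with $H(t)<B$ that is not bad, not in $g^{-1}(Y)$, and has $\widehat h_{D,f}(g(t))>0$; such $x=g(t)$ proves the Claim, and Lemma \ref{lem_key} then finishes Theorem \ref{thm_unirat2}. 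The hard part is exactly this last estimate: a single window $W_\lambda$ sitting at the top of the counting range could a priori contain a positive fraction of all rational points of that height, so one genuinely must use the geometric sparseness of $\Lambda$ to slide the cutoff $\log B$ into a gap, and the geometric‑tail bound for $\sum\#W_\lambda$ together with the choice of $L$ is the technical heart of the proof.
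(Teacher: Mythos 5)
Your proposal is correct, and its skeleton matches the paper's: pull everything back along the curve $g$ supplied by (\dag), compare $\widehat h_{D,f}\circ g$ with $e\,h$ up to a bounded error, observe via $\widehat h_{D,f}\circ f=\delta_f\widehat h_{D,f}$ that an orbit collision forces $\widehat h_{D,f}(g(t))$ into the geometric set $\bigcup_i\delta_f^{\mathbb Z}\widehat h_{D,f}(x_i)$, and invoke Theorem \ref{thm_rate}. Where you diverge is the avoidance mechanism. The paper assembles the set $V$ of all forward values $\widehat h_{D,f}(f^m(x_i))$ (plus the finitely many values on $g(\PP^1)\cap Y$), notes that $\log V$ is eventually $\log\delta_f$-periodic, extracts a uniform minimal gap $\log d'$, and uses Schanuel only to place one point $a\in\PP^1(K)$ with $rh(a)$ in a single window $[h_l+c,\,h_{l+1}-c)$; periodicity then propagates the avoidance along the whole forward orbit. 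You instead allow $j\in\mathbb Z$ in the forbidden set (which neatly removes the need for orbit propagation, at the cost of an accumulation point at $0$, correctly absorbed into your $O(1)$ via Northcott) and run a counting/proportion argument: bad $t$ of height $<B$ number $O(e^{-2L/e}B^2)+O(1)$ versus the total $\sim C_0B^2$. Your route is softer and would work whenever the forbidden value set is merely sparse ($O(\log T)$ elements below $T$), while the paper's gap-selection is more explicit and avoids comparing two asymptotic counts, but it genuinely needs the periodic structure (the same structure you need, in milder form, only to place $\log B$ in a gap so no window straddles the cutoff). Two small imprecisions, neither fatal: the sum of window sizes is not literally a convergent geometric series (the terms $T^{\delta_f^{-j}}$ tend to $1$, and the infinitely many small-$\lambda$ windows each have size $\Theta(1)$); the correct statement is that the top term dominates the range $\lambda\gg 1$ and the low-$\lambda$ windows all sit inside one fixed bounded-height finite set, which is exactly your $+O(1)$. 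Also ``choosing $L$ small enough'' should read ``choosing $L$ large enough so that $e^{-2L/e}$ is small,'' as your parenthetical indicates.
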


\noindent We can take a morphism $g:\PP^1_K \to X$ over $K$ such that
$g(\PP^1) \not\subset Y$ and $g^*D$ is ample by Lemma \ref{lem_p1}.
Write $g^*D \sim \mathcal O(r)$.
Then it follows from the functoriality of height functions that there exists $c>0$ such that
$| \widehat h_{D,f}(g(a)) - r h(a) | <c$ for any $a \in \PP^1(\overline K)$,
where $h$ is the natural height function on $\PP^1$.
Set
$$V=\{ \widehat h_{D,f}(f^m(x_i)) \mid m \geq 0,\ 1 \leq i \leq k \}
\cup \{ \widehat h_{D,f}(x) \mid x \in g(\PP^1) \cap Y \}$$
and
$v= \{ \log h \mid h \in V \} \subset \R \cup \{ -\infty \}$.
Write $V=\{ h_1, h_2, \ldots \}$ as satisfying $h_1<h_2<\cdots$.
Now we have
$$v=\bigcup_{i=1}^k \AP(\log \widehat h_{D,f}(x_i), \log \delta_f) \cup (\mathrm{finite\ set}).$$
So there exists $M>0$ such that
$$v \cap [M+n\log \delta_f,M+(n+1) \log \delta_f]
=v \cap [M,M+\log \delta_f]+n \log \delta_f$$
for any $n \in \Z_{\geq 0}$.
Let $d'>1$ be the number such that
$$\log d'=\min \{ |\alpha-\beta|
\mid  \alpha, \beta \in v \cap [M,M+2\log \delta_f],\ \alpha \neq \beta \}.$$
Take $d \in \R$ satisfying $1<d<d'$.

Now we have
$$\liminf_{l \to \infty} \frac{h_{l+1}-c}{h_l+c}=d'>d.$$
So we have
$\log h_l>M$ and $(h_{l+1}-c)/(h_l+c)>d$ for a sufficiently large $l \in \Z_{>0}$.

Set
$$N_K(p,q)=\{ a \in \PP^1(K) \mid
p \leq h(a)<q \}.$$
Using Theorem \ref{thm_rate}, there is a constant $C>0$ such that
$$N_K \left(\frac{h_l+c}{r},\frac{d(h_l+c)}{r}\right) \sim C \exp \left( \frac{2d}{r} (h_l+c) \right) \to \infty \ (l \to \infty).$$
So, taking $l$ sufficiently largely, we can take $a \in \PP^1(K)$ such that
$h_l+c \leq rh(a) < d(h_l+c)$.
So we have $h_l+c \leq rh(a) < h_{l+1}-c$.

Set $x=g(a)$.
Then we have
\begin{align*}
\widehat h_{D,f}(x)-h_l
&=(\widehat h_{D,f}(x)-rh(a))+(rh(a)-h_l)\\
&> -c+c=0
\end{align*}
and
\begin{align*}
h_{l+1}-\widehat h_{D,f}(x)
&=(h_{l+1}-rh(a))+(rh(a)-\widehat h_{D,f}(x) \\
&> c-c=0.
\end{align*}
So $h_l < \widehat h_{D,f}(x) < h_{l+1}$.
Hence $\widehat h_{D,f}(f^m(x)) \not\in V$ for any $m \geq 0$
since $v \cap [M,\infty)$ is $\log \delta_f$-periodic.
Thus $O_f(x) \cap O_f(x_i)=\varnothing$ for $1 \leq i \leq k$.
\end{proof}

\section{Abelian varieties}\label{sec_ab}
In this section, we prove Theorem \ref{thm_ab}.

The following is a censequence of Faltings's theorem.

\begin{lem}\label{lem_int}
Let $A$ be an abelian variety,
$x \in A(\overline K)$ a point such that $\Z_{> 0}x$ is dense,
and $Y \subset A_{\overline K}$ a proper closed subset.
Then $Y \cap \Z_{> 0} x$ is a finite set.
\end{lem}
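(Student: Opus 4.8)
The plan is to deduce this from the Mordell--Lang theorem for abelian varieties (Faltings, Vojta), applied to the subgroup generated by $x$.

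First I would dispose of the trivial case $\dim A = 0$, where $A_{\overline K}$ is a single point and any proper closed subset is empty. So assume $\dim A \geq 1$. Then $x$ cannot be a torsion point: otherwise $\Z_{>0}x$ would be a finite set and could not be Zariski dense in the positive-dimensional variety $A_{\overline K}$. Hence $\Gamma := \Z x = \langle x \rangle$ is an infinite cyclic --- in particular finitely generated --- subgroup of $A(\overline K)$. Since $\Z_{>0}x \subseteq \Gamma$ and $\Z_{>0}x$ is Zariski dense, the Zariski closure of $\Gamma$ is all of $A_{\overline K}$. It suffices to prove that $Y \cap \Gamma$ is finite, since this set contains $Y \cap \Z_{>0}x$; and replacing $Y$ by its reduced irreducible components, we may assume $Y$ is an irreducible closed subvariety. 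Everything takes place over $\overline K = \overline{\Q}$, so the characteristic-zero theory applies.

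Next I invoke Mordell--Lang: the set $Y(\overline K) \cap \Gamma$ is a finite union $\bigcup_{i=1}^{r}(a_i + \Gamma_i)$ of cosets of subgroups $\Gamma_i \subseteq \Gamma$, and for each $i$ the Zariski closure of $a_i + \Gamma_i$ is a translate $a_i + B_i$ of an abelian subvariety $B_i \subseteq A_{\overline K}$ satisfying $a_i + B_i \subseteq Y$. Suppose for contradiction that $Y \cap \Gamma$ is infinite; then some $\Gamma_i$ is infinite, hence, being a nonzero subgroup of $\Gamma \cong \Z$, of finite index $m := [\Gamma : \Gamma_i] < \infty$. Writing $\Gamma$ as the union of its $m$ cosets of $\Gamma_i$ and taking Zariski closures, $A_{\overline K} = \overline{\Gamma}$ is contained in a finite union of translates of $B_i$; since $A_{\overline K}$ is irreducible and each translate is closed, one of them equals $A_{\overline K}$, which forces $B_i = A_{\overline K}$. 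Then $a_i + B_i = A_{\overline K} \subseteq Y$, contradicting that $Y$ is a proper closed subset. Therefore $Y \cap \Gamma$ is finite, and a fortiori so is $Y \cap \Z_{>0}x$.

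I do not expect a serious obstacle. The only points requiring care are (i) quoting Mordell--Lang in the form that records the containment $a_i + B_i \subseteq Y$ of the Zariski closures of the cosets (rather than just the coset decomposition of $Y \cap \Gamma$), and (ii) the elementary observation that a finite-index subgroup of a group that is Zariski dense in $A_{\overline K}$ is again Zariski dense, which follows from irreducibility of $A_{\overline K}$ exactly as above.
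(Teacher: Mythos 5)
Your argument is correct, and it is exactly the route the paper intends: the paper states this lemma as an immediate consequence of Faltings's theorem (Mordell--Lang for abelian varieties) without writing out details, and your proof simply supplies those details --- applying the coset decomposition of $Y\cap\Z x$, noting that any infinite subgroup of $\Z x\cong\Z$ has finite index, and using density of $\Z_{>0}x$ together with irreducibility of $A_{\overline K}$ to derive a contradiction with $Y$ being proper. No gaps; this matches the paper's approach.
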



\begin{lem}\label{lem_num}
Let $\delta \in \R$ be an algebraic number and $l_1,\ldots,l_k \in \Z_{>0}$.
Then there exist infinitely many $l \in \Z_{>0}$ such that
$$\left( \frac{l_i}{l} \right)^2 \not\in \delta^\Z =\{ \delta^n \mid n \in \Z \}.$$
\end{lem}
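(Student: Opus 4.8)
\emph{Plan of proof.} The plan is to reduce the condition involving the (possibly irrational) algebraic number $\delta$ to a condition involving powers of a single positive rational number, and then to dispose of the latter by an elementary $p$-adic valuation argument.

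\emph{Step 1: reduction to a rational base.} We may assume $\delta \neq 0$, since the only case needed in the sequel is $\delta = \delta_f > 1$. First I would observe that $G := \{ n \in \Z \mid \delta^n \in \Q \}$ is a subgroup of $\Z$ (here one uses $\delta \neq 0$), hence $G = n_0 \Z$ for some $n_0 \geq 0$. Since each $(l_i/l)^2$ is a positive rational, $(l_i/l)^2 \in \delta^{\Z}$ if and only if $(l_i/l)^2 \in \delta^{\Z} \cap \Q$, and $\delta^{\Z} \cap \Q$ equals $\{1\}$ when $n_0 = 0$ and equals $q_0^{\Z}$ with $q_0 := \delta^{n_0} \in \Q \setminus \{0\}$ when $n_0 \geq 1$. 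If $n_0 = 0$ or $q_0 \in \{\pm 1\}$, then $\delta^{\Z} \cap \Q \subseteq \{\pm 1\}$, and positivity of squares forces the condition $(l_i/l)^2 \in \delta^{\Z}$ to be equivalent to $l = l_i$; in that case any $l$ outside the finite set $\{l_1,\ldots,l_k\}$ works and we are done. Otherwise $|q_0| \neq 1$; after replacing $q_0$ by $1/q_0$, which does not change $q_0^{\Z}$, we may assume $|q_0| > 1$, and positivity of $(l_i/l)^2$ lets us discard the sign of $q_0$. So it suffices to find infinitely many $l$ with $(l_i/l)^2 \notin q^{\Z}$ for all $i$, where $q = a/b$ in lowest terms with $a > b \geq 1$ (hence $a \geq 2$).

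\emph{Step 2: the valuation argument.} For each prime $p$ with $p > \max_i l_i$ and $p \nmid ab$ — all but finitely many primes, hence infinitely many — I claim $l = p$ works. Indeed, if $(l_i/p)^2 = (a/b)^m$ for some $i$ and $m \in \Z$, then the case $m = 0$ gives $l_i = p$, impossible since $l_i < p$; and for $m \neq 0$, clearing denominators and comparing $p$-adic valuations of the two sides, the side carrying the factor $p^2$ has valuation $2$ while the other has valuation $0$, because $p$ divides none of $l_i$, $a$, $b$ — a contradiction. Thus these infinitely many primes furnish infinitely many admissible values of $l$.

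\emph{Main obstacle.} The only step that is not a routine verification is the reduction in Step 1: one must resist working with $\delta$ directly, since dynamical degrees of isogenies of abelian varieties — the intended values of $\delta$ — need not be rational, and one has to check carefully that the subgroup structure of $G$ together with the positivity of the squares $(l_i/l)^2$ genuinely collapses the problem to a single rational base. Once that reduction is in place, the valuation argument in Step 2 is immediate.
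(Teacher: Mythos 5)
Your argument is correct, and it takes a somewhat different route from the paper's. The paper works directly in the ring of integers $R$ of $\Q(\delta)$: it chooses a nonzero prime $\mathfrak{p}$ of $R$ at which $\delta$ and all the $l_i$ are units (all but finitely many primes qualify), lets $p$ be the rational prime under $\mathfrak{p}$, and takes any $l$ divisible by $p$; then $v_{\mathfrak{p}}\bigl((l_i/l)^2\bigr)=-2v_{\mathfrak{p}}(l)\neq 0=v_{\mathfrak{p}}(\delta^n)$, so no discussion of rationality of powers of $\delta$ and no case analysis is needed. You instead first identify the rational part $\delta^{\Z}\cap\Q$ — via the exponent subgroup $G\subset\Z$ — as either a subgroup of $\{\pm 1\}$ or $q_0^{\Z}$ with $q_0\in\Q$, $|q_0|>1$, and then run the same kind of valuation argument, but with ordinary $p$-adic valuations on $\Q$ (prime $p>\max_i l_i$, $p\nmid ab$, $l=p$, valuation $0$ versus $2$). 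The trade-off: the paper's proof is shorter because requiring $v_{\mathfrak{p}}(\delta)=0$ absorbs all cases at once, while your reduction costs a (correctly handled) case analysis for $\delta^{\Z}\cap\Q\subseteq\{\pm1\}$ and for the sign and inversion of $q_0$; on the other hand, your proof never uses the algebraicity of $\delta$ — it works verbatim for any nonzero real $\delta$ — and it stays entirely inside the arithmetic of $\Q$, with no number ring needed. Your explicit restriction to $\delta\neq 0$ is harmless: the set $\delta^{\Z}$ only makes sense in that case, and the paper's proof assumes it implicitly as well.
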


\begin{proof}
Let $R$ be the ring of integers of $\Q(\delta)$.
Then the set
$$\mathcal P=\{(0)\} \cup \{ \frak p \in \Spec R \setminus \{ (0) \}
\mid v_{\frak p}(l_i) \neq 0 \ \mathrm{for\ some\ } i \ \mathrm{or}\ v_{\frak p}(\delta) \neq 0 \}$$
is finite.
Take $\frak p \in \Spec R \setminus \mathcal P$.
Now $\frak p \cap \Z=(p)$ for some prime number $p$.
Let $l$ be any member of $(p)$.
Then we have
$$v_{\frak p} \left( \left( \frac{l_i}{l} \right)^2 \right) \neq 0=v_{\frak p}(\delta^n).$$
for any $1 \leq i \leq k$ and $n \in \Z$.
So the assertion follows.
\end{proof}

\begin{proof}[Proof of Theorem {\ref{thm_ab}}]
Set $\delta=\delta_f$ and
$f=\tau_a \circ \phi$ where $\phi:A \to A$ is an isogeny and $\tau_a:A \to A$ is the translation by $a \in A(K)$.
Applying Lemma \ref{lem_sil}, there are abelian subvarieties $A_1, A_2 \subset A$ such that
\begin{itemize}
\item
The addition map $m: A_1 \times A_2 \to A$ is an isogeny,
\item
$A_i$ is $\phi$-invariant for $i=1,2$
(we set $\phi_i=\phi|_{A_i}$),
\item
$1_{A_1} -\phi_{1}: A_1 \to A_1$ is surjective, and
\item
$\delta_{\phi_2}=1$.
\end{itemize}
Write $a=a_1+a_2$ where $a_i \in A_i(\overline K)$ ($i=1,2$)
and take $b \in A_1(\overline K)$ such that $b-\phi_1(b)=a_1$.
Now we have the following commutative diagram:
\[
\xymatrix{
A_1 \times A_2 \ar[r]^{\phi_1 \times f_2} \ar[d]_{\tau_{b} \times 1_{A_2}} & A_1 \times A_2 \ar[d]^{\tau_{b} \times 1_{A_2}}\\
A_1 \times A_2 \ar[r]^{f_1 \times f_2} \ar[d]_{\mu} & A_1 \times A_2 \ar[d]^{\mu}\\
A \ar[r]^{f}& A
}
\]
Here $\mu: A_1 \times A_2 \to A$ is the addition map.
Note that $\delta=\delta_{f_1}=\delta_{\phi_1}$.

Take a point $x \in A_1(\overline K)$ such that
$\Z_{>0}x$ is Zariski dense in $(A_1)_{\overline K}$ (cf.~Theorem \ref{thm_Has}).
Extending $K$ if necessary, we may assume that all concerned are defined over $K$.
Let $z=lx+b+y$ where $l \in \Z_{>0}$ and $y \in A_2(K)$.
Now we have $\alpha_{\phi_1}(lx)=\delta$ since
$\widehat h_{D,\phi_1}(lx)=l^2 \widehat h_{D,\phi_1}(x)$ and $\widehat h_{D,\phi_1}(x)>0$ thanks to
Theorem \ref{thm_ks}.
We compute
\begin{align*}
\alpha_f(z)
&=\alpha_{\phi_1 \times f_2}(lx,y) \\
&=\max \{ \alpha_{\phi_1}(lx), \alpha_{f_2}(y) \} \\
&= \max \{\delta,1 \} = \delta.
\end{align*}
So, according to Lemma \ref{lem_key}, it is sufficient to prove the following claim,
where we set 

\begin{itemize}
\item $z \in A(K)$ satisfies (P) if $z=lx+b+y$ for some $l \in \Z_{>0}$ and $y \in A_2(K)$.
\item (Q) is same as (P).
\end{itemize}

\begin{claim}
Given finitely many points $z_1,\ldots,z_k \in A(K)$
such that $z_i=l_ix+b+y_i$ $(l_i \in \Z_{>0},\ y_i \in A_2(K))$
for $1 \leq i \leq k$
and a proper closed
subset $Y \subset A_{\overline K}$,
there exists a point $z \in A(K)$ such that
$z=lx+b+y$ $(l \in \Z_{>0},\ y \in A_2(K))$, $z \not\in Y$, and
$O_f(z) \cap O_f(z_i)=\varnothing$ for $1 \leq i \leq k$.
\end{claim}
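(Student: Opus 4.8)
The plan is to turn an intersection of $f$-orbits into the numerical constraint $(l_i/l)^2\in\delta^{\Z}$, to destroy this constraint for all $i$ simultaneously by choosing $l$ via Lemma~\ref{lem_num}, and then to adjust $y\in A_2(K)$ so that $z=lx+b+y$ escapes $Y$. Recall that $\widehat h_{D,\phi_1}$ is the canonical height on $A_1$ fixed above: it is a quadratic form (since $D$ was taken symmetric), $\widehat h_{D,\phi_1}\circ\phi_1=\delta\,\widehat h_{D,\phi_1}$, and $h_0:=\widehat h_{D,\phi_1}(x)>0$.

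First I would make the iterates explicit. Writing $g=\mu\circ(\tau_b\times 1_{A_2})$, we have $z=g(lx,y)$, and the commutative diagram gives $f^{\,n}\circ g=g\circ(\phi_1^{\,n}\times f_2^{\,n})$, hence
$$f^{\,n}(z)=g\bigl(\phi_1^{\,n}(lx),\,f_2^{\,n}(y)\bigr)=l\phi_1^{\,n}(x)+b+f_2^{\,n}(y)\qquad(n\ge 0),$$
and likewise $f^{\,m}(z_i)=l_i\phi_1^{\,m}(x)+b+f_2^{\,m}(y_i)$. Now suppose $O_f(z)\cap O_f(z_i)\ne\varnothing$, say $f^{\,n}(z)=f^{\,m}(z_i)$. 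Cancelling $b$, the points $\mu(l\phi_1^{\,n}(x),f_2^{\,n}(y))$ and $\mu(l_i\phi_1^{\,m}(x),f_2^{\,m}(y_i))$ agree, so the two argument pairs differ by an element of $\ker\mu$; as $\ker\mu$ is finite, say killed by $N\in\Z_{>0}$, this forces $Nl\phi_1^{\,n}(x)=Nl_i\phi_1^{\,m}(x)$ in $A_1$. Applying the quadratic form $\widehat h_{D,\phi_1}$ together with $\widehat h_{D,\phi_1}\circ\phi_1=\delta\,\widehat h_{D,\phi_1}$ gives $N^2l^2\delta^{\,n}h_0=N^2l_i^2\delta^{\,m}h_0$, and since $h_0>0$ we get
$$\Bigl(\frac{l_i}{l}\Bigr)^{2}=\delta^{\,n-m}\in\delta^{\Z}.$$
Thus, as soon as $(l_i/l)^2\notin\delta^{\Z}$ for every $i$, we have $O_f(z)\cap O_f(z_i)=\varnothing$ for every $i$, regardless of $y$.

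It remains to find such an $l$ together with a $y$ putting $z$ outside $Y$. Since $\delta=\delta_{\phi_1}$ is a real algebraic number $>1$ (the spectral radius of $\phi_1^*$ on $\NS(A_1)_{\R}$), Lemma~\ref{lem_num} yields infinitely many admissible $l$; in fact its proof exhibits a prime $p$ such that every $l\in p\Z_{>0}$ is admissible. For $l\in p\Z_{>0}$ one has $\{lx\}=[p](\Z_{>0}x)$, whose Zariski closure is $[p](A_1)=A_1$. Arranging (by a further finite extension of $K$, via potential density of abelian varieties, Theorem~\ref{thm_Has}) that $A_2(K)$ is Zariski dense in $A_2$, and using the dominance of $\mu$, the set $\{lx+b+y\mid l\in p\Z_{>0},\ y\in A_2(K)\}\subseteq A(K)$ is then Zariski dense in $A$, hence not contained in $Y$. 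Any element $z=lx+b+y$ of this set lying off $Y$ satisfies all three requirements of the Claim. (Alternatively, fixing a single prime $p$, one checks via Lemma~\ref{lem_int} that the $l$ with $lx+b+A_2\subseteq Y$ form a finite set — they are the preimage in $\Z_{>0}$ of the intersection of the Zariski-dense coset $\Z_{>0}\overline{x}+\overline{b}$, where $\overline{x},\overline{b}$ are the images of $x,b$, with a proper closed subset of $A/A_2$ — so one picks $l\in p\Z_{>0}$ outside it and then $y\in A_2(K)$ with $lx+b+y\notin Y$.)

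The delicate point is the dichotomy: one uses in an essential way that $\widehat h_{D,\phi_1}$ is quadratic — which is why $D$ had to be taken symmetric (legitimate since every numerical class on an abelian variety is symmetric, so the eigendivisor of Theorem~\ref{thm_eigen} may be chosen symmetric) — and, above all, that $h_0>0$. The latter is exactly the content of Theorem~\ref{thm_ks}: its zero locus is $B(\overline K)+\Tor(A_1(\overline K))$ for a proper abelian subvariety $B\subsetneq A_1$, and $x$ avoids it because otherwise the image of $\Z_{>0}x$ in $A_1/B$ would be finite, contradicting the Zariski density of $\Z_{>0}x$. The rest is bookkeeping with isogenies and height functions.
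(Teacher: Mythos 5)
Your proposal is correct and follows essentially the same route as the paper: the same commutative diagram for the iterates, finiteness of $\Ker\mu$ killed by some $N$, the quadraticity of $\widehat h_{D,\phi_1}$ together with $\widehat h_{D,\phi_1}\circ\phi_1=\delta\,\widehat h_{D,\phi_1}$ to force $(l_i/l)^2\in\delta^{\Z}$, and Lemma~\ref{lem_num} to choose $l$ avoiding this. The only (cosmetic) difference is how you avoid $Y$ — via Zariski density of the whole family $\{lx+b+y:\,l\in p\Z_{>0},\,y\in A_2(K)\}$ rather than the paper's projection $V=\pi_1(\mu^{-1}(Y))$ with Lemma~\ref{lem_int} — and you usefully make explicit the points the paper leaves implicit (that $D$ may be taken symmetric and that $\widehat h_{D,\phi_1}(x)>0$ via Theorem~\ref{thm_ks} and density of $\Z_{>0}x$).
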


Set $\tilde Y=\mu^{-1}(Y)$ and $V=\pi_1(\tilde Y)$, where
$\pi_1: A_1 \times A_2 \to A_1$ is the first projection.
By Lemma \ref{lem_int} and Lemma \ref{lem_num}, we can take $l \in \Z_{>0}$ such that
$(l/l_i)^2 \not\in \delta^\Z$ for $1 \leq i \leq k$ and $lx \not\in V$ if $V \neq A_1$.
Extending $K$ to make $A_2(K)$ Zariski dense,
we can take $y \in A_2(K)$ as satisfying
\begin{itemize}
\item $y$ is arbitrary if $V \neq A_1$, and
\item $(lx+b,y) \not\in \tilde Y$ if $V = A_1$.
\end{itemize}
Then $z=lx+b+y \not\in Y$.

Finally suppose $f^m(z_i)=f^n(z)$ for some $m,n \geq 0$ and $1 \leq i \leq k$.
This means that
$$(\phi_1^m(l_ix)+b,f_2^m(y_i))-(\phi_1^n(lx)+b,f_2^n(y))
\in \Ker \mu.$$
Now $\Ker \mu$ is a finite subgroup of $A_1 \times A_2$, so
$$N\left( (\phi_1^m(l_ix)+b,f_2^m(y_i))-(\phi_1^n(lx)+b,f_2^n(y)) \right)=0$$
for a positive integer $N$.
In particular, $N\phi_1^m(l_ix)=N\phi_1^n(lx)$.
Substituting this equality to $\widehat h_{D,\phi_1}$, we have
$$N^2\delta^m l_i^2 \widehat h_{D,\phi_1}(x)=N^2 \delta^n l^2 \widehat h_{D,\phi_1}(x).$$
Thus $(l/l_i)^2=\delta^{m-n}$, a contradiction.
So the claim follows.
\end{proof}

\section{$\PP^1$-bundles over elliptic curves}\label{sec_bundle}
By the potential density of abelian varieties, projective bundles over abelian varieties are also potentially dense.
We show the density of rational points with maximal arithmetic degree for $\PP^1$-bundle over elliptic curves.
We refer to Section $5$ and $6$ of \cite{MSS18a} for the properties of self-morphisms of $\PP^1$-bundles over elliptic curves.

\begin{thm}\label{thm_bundle}
Let $\pi : X \to C$ be a $\PP^1$-bundle
over an elliptic curve $C$ and $f: X \to X$ a surjective morphism with $\delta_f>1$.
Then there is a finite extension $L \supset K$ such that
$(X,f)$ has densely many $L$-rational points with maximal arithmetic degree.
\end{thm}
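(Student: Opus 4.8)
The plan is to reduce the statement to cases already handled, following the structure of the paper. A surjective self-morphism $f$ on a $\PP^1$-bundle $\pi: X\to C$ over an elliptic curve induces, after possibly replacing $f$ by an iterate (legitimate by Lemma \ref{lem_it}, since the relevant condition (Q) will be "lies in a suitable fiber class" or "has maximal arithmetic degree", which is preserved under $f$), a self-morphism $g: C\to C$ with $\pi\circ f = g\circ \pi$; this is standard for $\PP^1$-bundles over curves (see \cite{MSS18a}, Sections 5--6). Since $C$ is an elliptic curve, $g$ is either an isogeny-type map (translation composed with multiplication) with $\delta_g = \deg g$, or an automorphism (translation) with $\delta_g=1$. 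The first dynamical degree $\delta_f$ decomposes according to whether the "interesting" dynamics happens in the base or in the fiber direction: either $\delta_f = \delta_g > 1$, or $\delta_g = 1$ and the growth $\delta_f>1$ comes from the relative part of $f$ acting on $\NS(X)$.

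First I would dispose of the base-dominated case, $\delta_f=\delta_g>1$. Here $C$ is an elliptic curve with an isogeny-type self-map of degree $>1$, so by Theorem \ref{thm_ab} (applied to the abelian variety $C$), after a finite extension $L\supset K$ there are densely many $L$-rational points on $C$ with $\alpha_g = \delta_g$; concretely these come from $\Z_{>0}x$ for a point $x$ with dense orbit (Theorem \ref{thm_Has}), together with the canonical-height positivity from Theorem \ref{thm_ks}. I would then lift these to $X$: extending $L$ so that $X(L)\to C(L)$ is surjective on a Zariski-dense set (possible since $\pi$ is a $\PP^1$-bundle, hence locally trivial, and $\PP^1$ has many rational points — use Theorem \ref{thm_rate} on the fibers), pick for each such base point a rational point in its fiber. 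Functoriality of arithmetic degrees along $\pi$ gives $\overline\alpha_f(\tilde x)\ge \alpha_g(\pi(\tilde x))=\delta_g=\delta_f$, and Theorem \ref{thm_mat} gives the reverse inequality, so $\alpha_f(\tilde x)=\delta_f$. Disjointness of $f$-orbits follows because distinct base points already have disjoint $g$-orbits (by construction via Lemma \ref{lem_int}), and I would feed the whole construction through Lemma \ref{lem_key} with (P) = "$\pi(z)\in \Z_{>0}x$ and $z$ chosen in its fiber", which implies (Q) = "$\alpha_f(z)=\delta_f$".

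Second I would handle the fiber-dominated case, $\delta_g=1$ (so $g$ is a translation on $C$) and $\delta_f>1$. In this situation the relative picture is that $f$ acts on the relatively ample class with multiplier $\delta_f>1$; by Theorem \ref{thm_eigen} there is a nef $\R$-Cartier divisor $D\not\equiv 0$ with $f^*D\sim_\R \delta_f D$, and $D$ is positive on a general fiber. This is essentially the situation of hypothesis (\dag) in Theorem \ref{thm_unirat2}: for any proper closed $Y\subset X_{\overline K}$, a general fiber $\PP^1$ avoids $Y$ and $D$ restricted to it is ample, giving a map $\PP^1\to X$ with $g^*D$ ample. Thus I would invoke Theorem \ref{thm_unirat2} directly (over a finite extension $L$ chosen so that enough fibers and sections are defined over $L$) to conclude that $(X,f)$ has densely many $L$-rational points with maximal arithmetic degree.

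The main obstacle is the precise structure theory needed to split into these two cases cleanly — i.e., showing that after replacing $f$ by an iterate one gets $\pi\circ f^m = g\circ\pi$ for a self-morphism $g$ of $C$, and controlling $\delta_f$ versus $\delta_g$ via the action on $\NS(X)$, which is rank $2$ here. I would borrow this from \cite{MSS18a} (Sections 5--6), where the classification of endomorphisms of $\PP^1$-bundles over elliptic curves and their dynamical degrees is worked out; the only case not immediately covered by Theorems \ref{thm_ab} and \ref{thm_unirat2} would be a mixed degenerate situation, which I expect does not arise once $\delta_f>1$ forces either $\delta_g>1$ or a relatively ample eigendivisor. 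A secondary technical point is the repeated need to enlarge $K$ to a finite extension $L$ to make sections of $\pi$ and torsion/dense-orbit points rational; since the statement only asks for some finite $L$, these enlargements are harmless and can all be absorbed at the end.
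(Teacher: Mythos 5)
Your Case 1 (the base-dominated case $\delta_f=\delta_g$) is essentially the paper's argument: apply Theorem \ref{thm_ab} to $C$, lift base points along $\pi$ (fibers over $L$-points are $\PP^1_L$, so lifting costs nothing), and get maximality from $\alpha_f(\tilde x)\ge\alpha_g(\pi(\tilde x))$ plus Theorem \ref{thm_mat}, with orbit disjointness inherited from the base. The genuine problem is your case division. You claim that either $\delta_f=\delta_g>1$ or else $\delta_g=1$, and you dismiss the ``mixed'' situation as not arising; it does arise. For $X=C\times\PP^1$ and $f=g\times h$ with $\deg g=2$, $\deg h=3$, one has $\delta_f=3>\delta_g=2>1$, and this falls through both of your cases as stated (your first case needs $\delta_f=\delta_g$, your second explicitly assumes $g$ is a translation). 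The correct dichotomy, which is the one the paper uses, is $\delta_f=\delta_g$ versus $\delta_f>\delta_g$, and in the latter case $g$ need not have $\delta_g=1$.

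In the fiber-dominated case the entire content of the claim is that a nef eigendivisor $D\not\equiv 0$ with $f^*D\sim_\R\delta_f D$ (Theorem \ref{thm_eigen}) satisfies $D\cdot F>0$ for a fiber $F$, so that restrictions to general fibers give the morphisms $\PP^1_L\to X$ with ample pullback required by condition (\dag) of Theorem \ref{thm_unirat2}; you assert ``$D$ is positive on a general fiber'' without proof, and this is exactly where the paper's work lies. The paper splits according to whether $\mathcal{E}$ is decomposable, using the explicit eigendivisor $eF+C_0$ from \cite{MSS18a} in the decomposable case, and in the indecomposable case invoking Amerik's theorem to pass to a finite base change where the bundle becomes decomposable and descending via Lemma \ref{lem_cover} --- a step entirely absent from your outline. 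Your route can in fact be repaired without that split: $\NS(X)_\R$ has rank two with basis $C_0,F$, $F^2=0$, $C_0\cdot F=1$, and $f^*F\equiv\delta_g F$; writing $D\equiv aC_0+bF$ one has $D\cdot F=a$, and if $a=0$ then $D\equiv bF$ would be an eigenvector for $\delta_g$, contradicting $f^*D\equiv\delta_f D$, $D\not\equiv 0$, $\delta_f>\delta_g$. With that argument supplied (and a finite extension making $C(L)$ infinite, so that infinitely many fibers avoid any given proper closed $Y$), your appeal to Theorem \ref{thm_unirat2} covers all of $\delta_f>\delta_g$, including the mixed case, and is arguably more direct than the paper's Case 2. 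But as written, the proposal has a hole in the case analysis and leaves the key positivity claim unproven.
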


\begin{proof}
Every fiber of $\pi$ is mapped onto a fiber by $f$.
So $f$ induces a self-morphism $g: C \to C$
satisfying $\pi \circ f = g \circ \pi$.

\vspace{0.1in}

\underline{Case 1}: $\delta_f= \delta_g \ (=\delta)$.

\noindent Theorem \ref{thm_ab} implies that there exist a finite extension $L \supset K$
and a subset $T \subset C(L)$ such that
\begin{itemize}
\item[(1)]
$T$ is Zariski dense.
\item[(2)]
$\alpha_g(y)=\delta$ for any $y \in T$.
\item[(3)]
$O_f(y)\cap O_f(y')=\varnothing$ for any distinct two points $y,y' \in T$.
\end{itemize}
Let $\{Y_n\}_{n=1}^\infty$ be the set of all proper closed subsets of $X_{\overline K}$.
For each $n \in \Z_{\geq 1}$,
take $x_n \in X(L)$ inductively as follows:
if $\pi(Y_n) = C$, take any $y \in T \setminus \{\pi(x_1),\ldots,\pi(x_{n-1})\}$
and take $x_n \in X_y(L)$ satisfying $x_n \not\in Y_n$;
if $\pi(Y_n) \neq C$, take $y \in T \setminus \{\pi(x_1),\ldots, \pi(x_{n-1})\}$ satisfying
$y \not\in \pi(Y_n)$ and take any $x_n \in X_y(L)$.
Then the set $S=\{x_1,x_2,\ldots\}$ satisfies the claim.

\vspace{0.1in}

\underline{Case 2}: $\delta_f>\delta_g$.

\noindent Write $X= \PP(\mathcal{E})$, where $\mathcal{E}$ is a locally free sheaf of rank $2$ on $C$.
We may assume that $H^0(C,\mathcal E) \neq 0$ and
$H^0(C,\mathcal E \otimes \mathcal L)=0$ for any invertible sheaf $\mathcal L$ on $C$ with $\deg \mathcal L <0$.
Let $e= -\deg \mathcal{E}$ and $C_0$ a section of $\pi$ such that
$\mathcal{O}_X(C_0)\cong \mathcal{O}_X(1)$ (see \cite[V. Proposition 2.8]{Har77}).
Let $F$ be a fiber of $\pi$.

\vspace{0.1in}

\underline{Case 2.a}: $\mathcal{E}$ is decomposable i.e. $\mathcal{E} \cong \mathcal{O_C} \oplus \mathcal{L}$
for an invertible sheaf $\mathcal{L}$ on $C$.

\noindent Then $D_0 =eF+C_0$ is a nef Cartier divisor satisfying $f^\ast D_0 \equiv \delta_f D_0$
(see \cite[Lemma 5.9 and Lemma 5.10]{MSS18a}).
We can take a nef $\R$-Cartier divisor $D \equiv D_0$ satisfying $f^*D \sim_\R \delta_f D$ by Theorem \ref{thm_eigen} (2).
Now $D$ has positive intersection number with any fiber of $\pi$.
Hence $(X,f)$ satisfies the assumption ($\dag$) of Theorem \ref{thm_unirat2} and
so the assertion follows by Theorem \ref{thm_unirat2}.

\vspace{0.1in}

\underline{Case 2.b}: $\mathcal{E}$ is indecomposable.

\noindent Then $\mathcal E$ is semistable (see \cite[10.2 (c), 10.49]{Muk02}).
Applying \cite[Theorem 2 and Proposition 2.4]{Ame03} and \cite[Lemma 6.3]{MSS18a},
there is an endomorphism $q: C \to C$ such that
the base change $\pi':X' \to C$ of $\pi:X\to C$ along $q$ is a $\PP^1$-bundle
defined by a decomposable locally free sheaf
and there is a morphism $g':C \to C$ satisfying $q \circ g'=g \circ q$.
Let $\pi': X'\to C$ and $p: X'\to X$ be the projections.
By the universality of cartesian products,
a morphism $f': X' \to X'$ satisfying
$p\circ f= f'\circ p$ and $\pi ' \circ f'= g'\circ \pi '$ is induced.
By Lemma \ref{lem_cover}, this case is reduced to Case 2.a.
\end{proof}

\appendix
\def\thesection{\Alph{section}}
\section{On a theorem of Kawaguchi--Silverman}\label{sec_app}
Notation and conventions in this appendix are as above.
Other notation follows \cite{KS14}.
In this appendix, we show the following.

\begin{thm}[cf.~{\cite[Theorem 3]{KS14}}]\label{thm_richness}
	Let $f: \A^2_K \to \A^2_K$ be a dominant rational map with $\delta_f>1$.
	Assume that either of the following is true:
		\begin{parts}
			\Part{(a)} $f^m$ is algebraically stable for some $m>0$.
			\Part{(b)} $f$ is a quadratic map.
		\end{parts}
Then there is a finite extension field $L \supset K$ such that
$(\A^2_K,f)$ has densely many $L$-rational points with maximal arithmetic degree.
\end{thm}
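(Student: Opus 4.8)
The plan is to reduce, by a birational change of compactification, to an algebraically stable rational self-map of a smooth projective surface, and then to run the counting argument from the proof of Theorem \ref{thm_unirat2} in that setting; the only genuinely new point is the bookkeeping around the indeterminacy locus. For the reduction: in case (a) we may replace $f$ by $f^{m}$, since maximality of the arithmetic degree and disjointness of $f$-orbits follow from the same statements for $f^{m}$ together with the $f$-invariance of the conditions involved, exactly as in Lemma \ref{lem_it}; so $f$ is algebraically stable on $\PP^{2}_{K}$. In case (b) one invokes the classification of quadratic self-maps of $\PP^{2}$ (Diller--Favre): after a finite sequence of point blow-ups $\pi\colon X\to\PP^{2}_{K}$ and, if necessary, after replacing $f$ by an iterate and $K$ by a finite extension, $f$ lifts to an algebraically stable rational self-map of the smooth projective rational surface $X$. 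In either case $X$ contains a dense open subset identified with a dense open subset of $\A^{2}_{K}$, and it is enough to produce the required Zariski dense set of $L$-points there.

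On $X$ I would build the nef eigenclass and its canonical height. As $X$ is a smooth rational surface, numerical and $\R$-linear equivalence of $\R$-divisors agree; algebraic stability gives $(f^{n})^{*}=(f^{*})^{n}$ on $\NS(X)_{\R}$, so $\delta_{f}$ is the spectral radius of $f^{*}$, and since $f^{*}$ preserves the nef cone a Perron--Frobenius argument yields a nonzero nef $\R$-divisor $D$ with $f^{*}D\equiv\delta_{f}D$, hence $f^{*}D\sim_{\R}\delta_{f}D$. Moreover $D\cdot C=0$ for every curve contracted by $f$, and $\pi_{*}D$ is a nonzero nef class on $\PP^{2}$, so a general line $\ell\subset\A^{2}_{K}$ satisfies $D\cdot\overline{\ell}=\deg\pi_{*}D>0$. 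Since $\delta_{f}>1$, the theory of canonical heights for algebraically stable maps (Kawaguchi--Silverman; compare Theorems \ref{thm_eigen}--\ref{thm_canht}) provides, on the set $X_{f}$ of points with well-defined infinite forward orbit, a function $\widehat{h}_{D,f}$ with $\widehat{h}_{D,f}\ge 0$, $\widehat{h}_{D,f}\circ f=\delta_{f}\widehat{h}_{D,f}$, $\widehat{h}_{D,f}=h_{D}+O(1)$ on $X_{f}$, and such that $\widehat{h}_{D,f}(x)>0$ implies $\alpha_{f}(x)=\delta_{f}$ (using also Theorem \ref{thm_mat}).

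Then I would apply Lemma \ref{lem_key} with (P) equal to (Q) equal to: $x$ lies in the chosen open subset of $\A^{2}_{K}$, has well-defined infinite forward orbit, and satisfies $\widehat{h}_{D,f}(x)>0$. Given finitely many such $x_{1},\dots,x_{k}$ and a proper closed $Y\subset X_{\overline K}$, I would take $g\colon\PP^{1}_{K}\to X$ to be the closure of a general affine line meeting the open subset, so $g(\PP^{1})\not\subset Y$ and $g^{*}D$ is ample; I would form the set $V$ of the values $\widehat{h}_{D,f}(f^{m}(x_{i}))=\delta_{f}^{m}\,\widehat{h}_{D,f}(x_{i})$ together with the canonical heights of the finitely many points of $g(\PP^{1})\cap Y$, note that $\log V$ is eventually $\log\delta_{f}$-periodic, and---using Theorem \ref{thm_rate} to count the $K$-points of bounded height on $g(\PP^{1})$---pick $a$ in a suitable gap so that $x=g(a)$ satisfies $x\in X_{f}$, $x\notin Y$, $\widehat{h}_{D,f}(x)>0$, and $\widehat{h}_{D,f}(f^{m}(x))\notin V$ for all $m$; the last condition forces $O_{f}(x)\cap O_{f}(x_{i})=\varnothing$, and Lemma \ref{lem_key} then delivers the desired set.

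The main obstacle is exactly the point left unaddressed in the proof of \cite[Theorem 3]{KS14}: showing that the set $g(\PP^{1})(L)\setminus X_{f}$ of points on the chosen line whose orbit eventually meets $I_{f}$ is harmless, so that enough points of $X_{f}$ survive in the prescribed height window. Algebraic stability ensures that no curve is contracted by any iterate of $f$ into $I_{f}$, whence each $f^{-n}(I_{f})$ is finite; the crux is to show that a point of $g(\PP^{1})$ whose orbit meets $I_{f}$ has bounded $D$-height---combining the comparison $\widehat{h}_{D,f}=h_{D}+O(1)$ with the functoriality of heights under $f$---so that $g(\PP^{1})(L)\setminus X_{f}$ is finite by Northcott's theorem and may be discarded. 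With that in place the argument closes as in Theorem \ref{thm_unirat2}.
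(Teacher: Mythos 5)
Your argument rests on a canonical height $\widehat{h}_{D,f}$ for an algebraically stable \emph{rational} self-map, with the three properties $\widehat{h}_{D,f}\circ f=\delta_f\,\widehat{h}_{D,f}$, $\widehat{h}_{D,f}=h_D+O(1)$ on $X_f$, and positivity implying $\alpha_f=\delta_f$. That is a genuine gap: Theorem \ref{thm_canht} (Call--Silverman) applies only to surjective morphisms with $f^*D\sim_{\R}\lambda D$ as divisors, and no such general construction exists for dominant, non-invertible, algebraically stable rational surface maps. For a rational map one only has the one-sided inequality $h_D(f(x))\le h_{f^*D}(x)+O(1)$ off $I_f$; the reverse inequality, and hence both the convergence of the telescoping limit with $\widehat{h}_{D,f}=h_D+O(1)$ and the exact functional equation, fail in general because the height discrepancies blow up along orbits approaching $I_f$. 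Since your gap-counting argument from Theorem \ref{thm_unirat2} needs \emph{exact} $\delta_f$-scaling of $\widehat{h}_{D,f}$ along orbits (to make the value set $V$ log-periodic) and the comparison with $h_D$ on the line (to count points in a height window), the whole mechanism collapses at this point. Your final paragraph, asserting that points of $g(\PP^1)$ whose orbit meets $I_f$ have bounded $D$-height, is likewise unsubstantiated. The paper avoids all of this by never invoking a global canonical height: it works $v$-adically near an attracting fixed/periodic point $Q_0$ at infinity (Proposition \ref{prop_prep}), where the exact relation $\lambda_v(f(P))=\delta_f\lambda_v(P)$ holds on a $v$-adic basin $U$, giving $\alpha_f(P)=\delta_f$ for all $P\in U\cap\A^2(K)$ and, via the precise values of $\lambda_v$ (Lemma \ref{lem_num2} plus the density statement Proposition \ref{prop_dense}), a direct way to separate orbits; the remaining subcases (linear action at infinity, and the quadratic normal forms of Guedj) are handled by the explicit local estimates of \cite{KS14}.

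There is also a problem with your reduction in case (b): the Diller--Favre stabilization theorem concerns \emph{birational} surface maps, whereas the quadratic polynomial endomorphisms of $\A^2$ covered by (b) are in general non-invertible, and non-invertible rational surface maps cannot always be made algebraically stable by blowing up (Favre's examples of non-stabilizable monomial maps). So even the passage to an algebraically stable model on some rational surface $X$ is not available in case (b); the paper instead uses Guedj's classification into three normal forms and treats each by the $v$-adic estimates just described (with Case 3.1 reduced to case (a) because $f^2$ is algebraically stable there).
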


In the above setting,
Kawaguchi and Silverman showed that 
$(\A^2_K,f)$ has densely many $\overline K$-rational points with maximal arithmetic degree (\cite[Theorem 3]{KS14}).
So Theorem \ref{thm_richness} is a stronger version of \cite[Theorem 3]{KS14}.

\subsection{Preparatory results}
Here we collect some preparatory notions and results.

\begin{defn}
Let $f:  \A^N_K \to \A^N_K$ a dominant morphism with $\delta_f>1$.
	For $P=(x_1, x_2, \ldots , x_N) \in \A^N(K)$ and $v \in M_K$,
	we define $\lambda_v: \A^N(K) \to \R$ and $\underline{\lambda}_{f,v}: \A^N(K)\to \R \cup \{\infty\}$ by
	\begin{align*}
		\lambda_v(P) &= \log \max\{ \| x_1 \|_v, \| x_2 \|_v, \ldots , \| x_N \|_v, 1 \},\\
		\underline{\lambda}_{f,v}(P) &= \liminf_{n\to \infty} \delta_f^{-n} \lambda_v(f^n(P)).
	\end{align*}
\end{defn}

\begin{rem}\label{rem_eq}
	If $\underline{\lambda}_{f, v_0}(P)>0$ for some $v_0 \in M_K$,
	then
	\[
	h(f^n(P))= \sum_{v\in M_K} \lambda_{v} (f^n(P)) \geq \lambda_{v_0} (f^n(P)) \succeq \delta_f^n.\]
	Hence we have
	\[
	\delta_f \geq \overline{\alpha}_f(P) \geq \underline{\alpha}_f(P) \geq \delta_f,
	\]
	where the first inequality is a result of \cite{Mat16}.
	Therefore $\alpha_f(P)=\delta_f$ in this case.
\end{rem}

%
%
%

\begin{lem}\label{lem_num2}
	Let $d\in \Z_{>1}$ and $l_1,\ldots,l_k \in \Z_{>0}$.
	Then there exist infinitely many $l \in \Z_{>0}$ such that
	\[
	\frac{l_i}{l} \not\in  d^\Z =\{  d^n \mid n \in \Z \}.
	\]
\end{lem}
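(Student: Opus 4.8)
The plan is to mimic the proof of Lemma~\ref{lem_num}, replacing the ring of integers of $\Q(\delta)$ by $\Z$; because $d$ is now a rational integer the argument collapses to a single $p$-adic valuation computation. First I would collect the finitely many ``bad'' primes: let $\mathcal{P}$ be the set of prime numbers $p$ such that either $p \mid d$ or $p \mid l_i$ for some $1 \le i \le k$. Since $d\,l_1 \cdots l_k$ is a nonzero integer, $\mathcal{P}$ is finite, hence there are infinitely many primes $p \notin \mathcal{P}$.

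Next, fix any prime $p \notin \mathcal{P}$ and let $l$ range over the positive multiples of $p$. For such $l$, each index $i$, and each $n \in \Z$, I would compute the $p$-adic valuation $v_p$: on the one hand $v_p(l_i/l) = v_p(l_i) - v_p(l) = -v_p(l) \le -1$ because $p \nmid l_i$; on the other hand $v_p(d^n) = n\,v_p(d) = 0$ because $p \nmid d$. These are unequal, so $l_i/l \ne d^n$ for every $i$ and every $n$, that is, $l_i/l \notin d^{\Z}$. Since $p\Z_{>0}$ is already an infinite set, this yields infinitely many $l \in \Z_{>0}$ with the desired property.

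I do not expect any genuine obstacle here; the only point requiring care is that the chosen prime must fail to divide \emph{both} $d$ and each $l_i$ simultaneously, which is precisely what forces the valuations $-v_p(l) \le -1$ and $0$ to be incompatible. (Equivalently one could simply take $l = p$ for each such prime $p$, but allowing all multiples of $p$ makes the infinitude transparent.)
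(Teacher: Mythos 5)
Your argument is correct and is essentially the paper's intended proof: the paper only remarks that the proof is "similar to Lemma \ref{lem_num}", whose argument is exactly this valuation computation (primes avoiding the support of $\delta$ and the $l_i$, then comparing $v_{\frak p}$ of the ratio with $v_{\frak p}(\delta^n)=0$), which you have simply specialized from the ring of integers of $\Q(\delta)$ to $\Z$.
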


Its proof is similar to Lemma \ref{lem_num}.


\begin{prop}\label{prop_dense}
	Let $v \in M_K$ be a non-archimedean place.
	Fix an element $\pi_v\in K$ such that $\| \pi_v \|_v = \max_{y \in \mathfrak{m}_{v}} \| y\|_v$, that is, $\pi_v$ is a uniformizer of $R_v$.
	For $\mathbf{a}= (a_1,a_2, \ldots , a_N) \in \Z^N$,
	let
		\[
		U_{\mathbf{a}}= \left\{ (x_1, x_2, \ldots ,x_N)\in \A^N(K) \mid
		\| x_i\|_v = \| \pi_v\|_v^{a_i} (1\leq i \leq N)\right\}.
		\]
		Then for any  $\mathbf{a}\in \Z^N$, $U_{\mathbf{a}}$ is Zariski dense in $\PP^N_{\overline K}$.
\end{prop}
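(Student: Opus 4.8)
The plan is to observe that $U_{\mathbf a}$ is, after intersecting with $K^N$, a nonempty $v$-adically open subset of $K_v^N$, and then to combine the density of $K$ in $K_v$ with the fact that a nonzero polynomial cannot vanish on a nonempty $v$-adically open subset of $K_v^N$. First I would reduce to a statement over $K$: since $\A^N_{\overline K}$ is open and dense in $\PP^N_{\overline K}$ and $U_{\mathbf a}\subset \A^N(K)$, it suffices to prove that $U_{\mathbf a}$ is Zariski dense in $\A^N_{\overline K}$, and because $U_{\mathbf a}$ consists of $K$-points this is equivalent to the assertion that for every nonzero $P\in K[T_1,\dots,T_N]$ there is a point $x\in U_{\mathbf a}$ with $P(x)\neq 0$. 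Indeed, if a nonzero $Q\in\overline K[T_1,\dots,T_N]$ vanished on $U_{\mathbf a}$, then writing $Q=\sum_j c_j P_j$ with the $P_j\in K[T_1,\dots,T_N]$ not all zero and the $c_j\in\overline K$ linearly independent over $K$, evaluation at the $K$-rational points of $U_{\mathbf a}$ would force every $P_j$ to vanish on $U_{\mathbf a}$, a contradiction.

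Next I would pass to the completion $K_v$ of $K$ at $v$ and extend $\|\cdot\|_v$; the value group is unchanged, so the set
$$\widetilde U_{\mathbf a}=\{(x_1,\dots,x_N)\in K_v^N \mid \|x_i\|_v=\|\pi_v\|_v^{a_i}\ (1\le i\le N)\}$$
is nonempty, since it contains $(\pi_v^{a_1},\dots,\pi_v^{a_N})$, and is $v$-adically open, being a product of translates of the open unit group $\mathcal O_{K_v}^{\times}$; moreover $\widetilde U_{\mathbf a}\cap K^N=U_{\mathbf a}$, because $\|\cdot\|_v$ on $K_v$ restricts to $\|\cdot\|_v$ on $K$. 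Now given a nonzero $P\in K[T_1,\dots,T_N]$, I would invoke the identity principle for polynomials over the infinite field $K_v$: a nonzero polynomial does not vanish on any nonempty $v$-adically open subset of $K_v^N$ (by induction on $N$, using that a nonzero one-variable polynomial has only finitely many roots while every nonempty open subset of $K_v$ is infinite). Hence the zero locus of $P$ in $K_v^N$ has empty interior, so $\widetilde U_{\mathbf a}\setminus\{P=0\}$ is a nonempty $v$-adically open subset of $K_v^N$; since $K^N$ is dense in $K_v^N$, it contains a point $x\in K^N$, and then $x\in\widetilde U_{\mathbf a}\cap K^N=U_{\mathbf a}$ with $P(x)\neq 0$, which is exactly what was needed.

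The only genuinely non-formal ingredient is the identity principle over $K_v$ in the last step; everything else is bookkeeping about base change and about the topology of non-archimedean local fields (clopenness of $\mathcal O_{K_v}^\times$ and of balls, infinitude of balls, density of $K$ in $K_v$), so I do not anticipate any serious obstacle.
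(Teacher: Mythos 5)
Your proof is correct, but it is packaged differently from the paper's. The paper's route is more direct: it observes that $U_{\mathbf a}$ is literally a product $S_1\times\cdots\times S_N$ of infinite subsets $S_i=\{x\in K : \|x\|_v=\|\pi_v\|_v^{a_i}\}$ of $\overline K$, and deduces the proposition from a lemma stating that any product of infinite subsets of $\overline K$ is Zariski dense in $\A^N_{\overline K}$; that lemma is proved by exactly the induction on the number of variables that you use for your identity principle (a nonzero one-variable polynomial has finitely many roots, while each $S_i$ is infinite). Because the lemma is stated over $\overline K$, no reduction from $\overline K$-coefficients to $K$-coefficients is needed there. Your argument instead passes to the completion $K_v$, notes that $\widetilde U_{\mathbf a}$ is a nonempty $v$-adically open set, proves the identity principle for polynomials on open subsets of $K_v^N$, and descends via density of $K$ in $K_v$; this makes your linear-independence reduction to $K$-coefficients genuinely necessary, and you carry it out correctly. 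The two arguments share the same combinatorial engine; what your version buys is a statement valid for arbitrary nonempty $v$-adic open subsets (not just product sets), at the cost of importing the completion, its topology, and the density of $K$ in $K_v$, none of which the paper's product-set formulation requires.
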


This proposition follows immediately from the following lemma.

\begin{lem}
  Let $S_1, \ldots, S_N$ be infinite subsets of $\overline{K}$.
  Then $S=\prod_{i=1}^{N}S_i$ is Zariski dense in $\A^N_{\overline K}$.
\end{lem}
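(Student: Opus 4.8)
The plan is to reduce the statement to the one-variable case and induct on $N$. First I would recall the standard reformulation: a subset $S\subseteq \A^N(\overline K)$ is Zariski dense in $\A^N_{\overline K}$ if and only if no nonzero polynomial $g\in\overline K[x_1,\ldots,x_N]$ vanishes identically on $S$. Indeed, if $S$ is not dense, its closure lies in a proper closed subset, hence in some hypersurface $V(g)$ with $g\neq 0$; conversely, since $\overline K$ is infinite, $V(g)\subsetneq \A^N_{\overline K}$ for every nonzero $g$, so $S\subseteq V(g)$ forces $S$ to be non-dense. Thus it suffices to prove: for every nonzero $g\in\overline K[x_1,\ldots,x_N]$ there is a point of $S=\prod_{i=1}^N S_i$ at which $g$ does not vanish.

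Next I would run the induction. For $N=1$, a nonzero one-variable polynomial has only finitely many roots, so it cannot vanish on the infinite set $S_1$. For the inductive step, I would expand
\[
g=\sum_{j=0}^{d} g_j(x_1,\ldots,x_{N-1})\,x_N^{\,j},
\qquad g_j\in\overline K[x_1,\ldots,x_{N-1}],
\]
and pick $j$ with $g_j\neq 0$ (possible since $g\neq 0$). Applying the induction hypothesis to $S_1,\ldots,S_{N-1}$ gives a point $(a_1,\ldots,a_{N-1})\in\prod_{i=1}^{N-1}S_i$ with $g_j(a_1,\ldots,a_{N-1})\neq 0$. Then $g(a_1,\ldots,a_{N-1},x_N)$ is a nonzero polynomial in the single variable $x_N$, so it fails to vanish on the infinite set $S_N$; choosing $a_N\in S_N$ accordingly produces a point of $S$ off $V(g)$, completing the induction.

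The argument is elementary, so there is no serious obstacle; the only points requiring a little care are the initial dictionary between Zariski density and non-vanishing of polynomials (where one uses that $\overline K$ is infinite, automatic here since it is algebraically closed) and correctly tracking which variables the induction hypothesis is applied to. Combined with Proposition~\ref{prop_dense}'s reduction, this finishes the proof.
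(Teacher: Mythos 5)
Your proof is correct and follows essentially the same route as the paper: reduce Zariski density to the non-vanishing of an arbitrary nonzero polynomial at some point of $S$, then induct on $N$ by expanding in one variable, using the inductive hypothesis to make a chosen nonzero coefficient non-vanish, and finishing with the one-variable case on an infinite set (the paper isolates the top coefficient in $X_1$, you take any nonzero coefficient in $x_N$ — an immaterial difference). One small remark: the closing appeal to Proposition~\ref{prop_dense} is unnecessary and in fact reversed, since that proposition is deduced from this lemma, not used in its proof.
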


\begin{proof}
  It is enough to show that for any nonzero polynomial
  $f\in \overline{K}[X_1, X_2, \ldots ,X_N]$,
  there is a point $(a_1, a_2, \ldots a_N) \in S$ where $f$ does not vanish.
  We prove the assertion by induction.
  If $N=1$, the assertion is trivial,
  since the number of zeros of $f$ is finite.

  In general, write
  \[
    f(X_1, \ldots X_N) = \sum_{i=0}^{d} f_i(X_2, X_3, \dots X_N) X_1^i
  \]
  with $f_i \in \overline{K}[X_2, X_3, \ldots X_N]$.
  By the induction hypothesis,
  we can find $(a_2, a_3, \ldots a_N) \in \prod_{i=2}^N S_i$
  at which $f_d$ does not vanish.
  So the polynomial $f(X_1, a_2, a_3, \ldots , a_N)$ is non-zero.
  Hence by the induction hypothesis for $N=1$,
  we can find $a_1 \in S_1$ such that $f(a_1, a_2, \ldots , a_N)\neq 0.$
\end{proof}

\subsection{Algebraically stable case}
For a self-morphism $f: \A^N_K \to \A^N_K$, we write $f$ also for the extension of $f$ to the rational map $\PP^N_K \dashrightarrow \PP^N_K$ by abuse of notation.
The following proposition is a generalization of \cite[Lemma 21]{KS14},
which is used several times in the proof of Theorem \ref{thm_richness} (a).

\begin{prop}\label{prop_prep}
Let $f : \A^N_K \to \A ^N_K$ be a dominant morphism
with $\delta_f>1$.
Assume that
$f^m$ is algebraically stable and has a fixed point $Q_0 \in \PP^N(K) \setminus \A^N(K)$
for some $m\geq 1$.
Then $(\A^N_K,f)$ has densely many $K$-rational points with maximal arithmetic degree.
\end{prop}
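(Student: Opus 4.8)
The plan is to combine the bookkeeping lemmas (Lemma \ref{lem_key} and Lemma \ref{lem_it}) with an arithmetic estimate near the orbit of $Q_0$. Write $H_\infty = \PP^N_K \setminus \A^N_K$ for the hyperplane at infinity and set $g := f^m$. Since $g$ is algebraically stable, $\delta_g$ equals the algebraic degree of the extension of $g$ to $\PP^N$; using $\delta_g = \delta_{f^m} = \delta_f^m$, this common value is an integer $d := \delta_f^m \in \Z_{>1}$. Because $f$ is a morphism of $\A^N$, its extension to $\PP^N$ has indeterminacy locus contained in $H_\infty$ and maps $\A^N$ into $\A^N$; together with $g(Q_0) = Q_0 \in H_\infty$ this forces the whole forward $f$-orbit $Q_0, Q_1 := f(Q_0), \dots$ to lie in $H_\infty$, so it is a periodic cycle (of period dividing $m$), none of whose points lies in an indeterminacy locus. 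I would fix once and for all a non-archimedean place $v \in M_K$ that is \emph{good}, meaning it avoids the finitely many rational primes dividing the coordinates of the $Q_j$, the coefficients of $f$ (hence of $g$) in suitable affine charts around the $Q_j$, and a few auxiliary denominators appearing below; infinitely many such $v$ exist.

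The heart of the argument is a local height identity. Choose small $v$-adic balls $\mathcal C_j$ around the $Q_j$ in $\PP^N$, so small that $f(\mathcal C_j) \subset \mathcal C_{j+1}$ (indices modulo the period) and $\mathcal C \cap I_f = \varnothing$, where $\mathcal C := \bigcup_j \mathcal C_j$; then $f(\mathcal C) \subset \mathcal C$. Since each $f^j$ ($0 \le j \le m$) is a polynomial self-map of $\A^N$, its homogenization pulls $H_\infty$ back to $(\deg f^j)\,H_\infty$ with no other component, and a direct computation in the charts at $Q_{j'}$ and $Q_{j'+j}$, using goodness of $v$ to eliminate the unit factors, should yield, for every $P \in \mathcal C_{j'} \cap \A^N(K)$ and $0 \le j \le m$,
\[
\lambda_v\bigl(f^j(P)\bigr) = (\deg f^j)\,\lambda_v(P),
\]
and in particular $\lambda_v\bigl(g^n(P)\bigr) = d^{\,n}\lambda_v(P)$ for $P \in \mathcal C_0 \cap \A^N(K)$ and all $n \ge 0$ (comparison with Theorem \ref{thm_mat} then forces $\deg f^j = \delta_f^{\,j}$, so these ratios are exactly $\delta_f$ per step). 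Consequently, if $x \in \A^N(K)$ has $f^{n_0}(x) \in \mathcal C$ for some $n_0$, then $\delta_f^{-n}\lambda_v\bigl(f^n(x)\bigr)$ is eventually squeezed between two positive constants, so $\underline{\lambda}_{f,v}(x) > 0$, and Remark \ref{rem_eq} gives $\alpha_f(x) = \delta_f$.

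I would then apply Lemma \ref{lem_key} to $(\A^N_K, f)$ with (P) and (Q) both being the condition that $f^n(x) \in \mathcal C$ for some $n \ge 0$. As $f(\mathcal C) \subset \mathcal C$, once an orbit meets $\mathcal C$ it stays there, so (Q) is $f$-invariant; hence by Lemma \ref{lem_it} it suffices to verify condition ($*$) for $(\A^N_K, g)$. Given $x_1, \dots, x_k \in \A^N(K)$ satisfying (Q), say $f^{n_i}(x_i) \in \mathcal C$, and a proper closed subset $Y$, the local height identity applied to the tail of the orbit of each $x_i$ shows that, for all large $b$, $\lambda_v\bigl(g^{\,b}(x_i)\bigr) = N_i\, d^{\,b}\,\log\|\pi_v\|_v^{-1}$ for a positive integer $N_i$ depending only on $x_i$. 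By Lemma \ref{lem_num2} applied to $d$ and $N_1, \dots, N_k$, choose a large $l \in \Z_{>0}$ with $N_i/l \notin d^{\Z}$ for all $i$; by Proposition \ref{prop_dense} applied in an affine chart of $\PP^N$ containing $Q_0$, choose $P \in \mathcal C_0 \cap \A^N(K)$, $P \notin Y$, with $\lambda_v(P) = l\,\log\|\pi_v\|_v^{-1}$. Then (P) holds for $P$ and $\lambda_v\bigl(g^{\,n}(P)\bigr) = d^{\,n}\, l\,\log\|\pi_v\|_v^{-1}$; if $g^{\,a}(P) = g^{\,b}(x_i)$ for some $a,b \ge 0$, then applying $g^{\,s}$ for large $s$ and taking $\lambda_v$ forces $d^{\,a}l = N_i d^{\,b}$, i.e. $N_i/l = d^{\,a-b} \in d^{\Z}$, contradicting the choice of $l$. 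Hence $O_g(P) \cap O_g(x_i) = \varnothing$ for all $i$, verifying ($*$), and Lemma \ref{lem_key} produces the required Zariski-dense set $S \subset \A^N(K)$.

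The step I expect to be the main obstacle is the local height identity in the second paragraph: one must show that algebraic stability of $f^m$ — which pins $\delta_{f^m}$ to the genuine algebraic degree — the geometry of polynomial self-maps of $\A^N$ near a point of $H_\infty$ on a periodic cycle, and the choice of a good non-archimedean place together force $\lambda_v$ to be multiplied \emph{exactly} by $\deg f^j$, with no bounded error, once an orbit has entered $\mathcal C$. This exactness is what makes the arithmetic degree equal to $\delta_f$ (and, by Theorem \ref{thm_mat}, no larger), and it is also exactly what reduces the orbit-separation clause of ($*$) to the elementary number theory of Lemma \ref{lem_num2}.
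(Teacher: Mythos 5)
Your skeleton (a $v$-adic region at infinity on which $\lambda_v$ scales exactly, Remark \ref{rem_eq}, then Lemma \ref{lem_key}/\ref{lem_it} together with Lemma \ref{lem_num2} and Proposition \ref{prop_dense} for orbit separation) is the same as the paper's, but the step you yourself flagged as the main obstacle is a genuine gap: the statements you need there are false in general, not merely hard. You assume that the extension of $f$ to $\PP^N$ is defined at $Q_0$ and along a forward cycle $Q_0,Q_1,\dots$ in $H_\infty$, and that $\lambda_v(f^j(P))=(\deg f^j)\,\lambda_v(P)$ near that cycle. But only $f^m$ is assumed to be defined at $Q_0$ and to fix it; $f$ itself may be indeterminate there. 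Take $f(x,y)=(y^2,x)$ on $\A^2$: then $f^2(x,y)=(x^2,y^2)$ is algebraically stable and fixes $Q_0=[1:0:0]$, while the extension of $f$ is $[Y^2:XW:W^2]$, which is indeterminate at $Q_0$, so your cycle of balls $\mathcal{C}_j$ with $f(\mathcal{C}_j)\subset\mathcal{C}_{j+1}$ does not exist. Moreover $\deg f=2$ but $\delta_f=\sqrt{2}$, so the conclusion ``$\deg f^j=\delta_f^{\,j}$'' you extract via Theorem \ref{thm_mat} is false, and the identity itself fails: for $(x,y)\in\A^2(K)$ with $\|x\|_v>1\geq\|y\|_v$ (points $v$-adically close to $Q_0$) one has $\lambda_v(f(x,y))=\lambda_v(x,y)$, not $(\deg f)\lambda_v(x,y)$. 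Exact scaling by the algebraic degree is only available for the algebraically stable iterate $g=f^m$ at its own fixed point.

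The repair is to give up $f$-invariance of the neighborhood, which is what the paper does. After moving $Q_0$ to $[1:0:\cdots:0]$, one writes $g=[aX_1^l+G_1:G_2:\cdots:G_N:W^l]$ with $l=\deg g$, $a\neq 0$ and $G_i\in(X_2,\dots,X_N,W)$, chooses $v$ so that all coefficients are $v$-adic units, and takes $U=\{\|x_1\|_v>1,\ \|x_1\|_v>\|x_i\|_v\ (2\leq i\leq N)\}$; then $g(U\cap\A^N(K_v))\subset U$ and $\lambda_v(g(P))=l\,\lambda_v(P)$ there, algebraic stability of $g$ entering only to identify $l=\delta_g$, whence $\alpha_f(P)=\delta_f$ on $U\cap\A^N(K)$ by Remark \ref{rem_eq}. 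One then applies Lemma \ref{lem_key} with (P) being ``$P\in U$'' and (Q) unconditional (so trivially $f$-stable), uses Lemma \ref{lem_it} to reduce ($*$) to $g$, and handles arbitrary given points $P_i$ by replacing each with the first point of its orbit lying in $U$, discarding those whose orbit never meets $U$ (such a $P_i$ cannot share a $g$-orbit with a point of $U$, since $U$ is forward $g$-invariant). After this adjustment your final paragraph --- Lemma \ref{lem_num2} to choose $l$ with $l_i/l\notin d^{\Z}$ and Proposition \ref{prop_dense} to find $P\in U\setminus Y$ with prescribed $\lambda_v(P)$ --- goes through essentially verbatim and reproduces the paper's argument.
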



\begin{proof}
We devide the proof into two steps.

\begin{step}
First we take a suitable $v \in M_K$
and a $v$-adic neighborhood $U \subset \PP^N(K_v)$ of $Q_0$ satisfying the following properties:

	\begin{enumerate}
		\item[(i)] $f^m(U \cap \A^N(K_v)) \subset U\cap \A^N(K_v)$.
		\item[(ii)] $\alpha_f(P)=\delta_f$ for any $U \cap \A^N(K)$.
	\end{enumerate}

Set $g=f^m$.
Let $[X_1:X_2:\cdots X_N:W]$ be homogeneous coordinates of $\PP^N_K$
and identify $\A^N_K$ with the locus $\{ W\neq 0\}$.
Conjugating $g$ by a linear transformation, we may assume $Q_0=[1:0:\cdots :0]$.
Since $Q_0$ is fixed by $g$, we can write $g$ as
	\[
	[aX_1^l+ G_1(X,W): G_2(X, W) :\cdots :G_N(X, W): W^l],
	\]
where $l=\deg g$, $a \neq 0$, and $G_i$ $(1\leq i\leq N )$ are elements of the ideal $(X_2,\ldots ,X_N, W)$ of $K[X_1,\ldots ,X_N, W]$.
Take $v\in M_K$ such that the all coefficients of $g$ are in $R_v^\ast$, and
consider the locus
	\[
	U 
	= \left\{ [x_1 :\cdots :x_N: w] \in \PP^N(K_v)
		\setmid
			\begin{minipage}{8em}
				\begin{align*}
					&x_1 \neq 0,\ \left\| w/x_1\right\|_v < 1,\\
					&\left\| x_i/x_1\right\|_v <1\ (2\leq i \leq N)
				\end{align*}
			\end{minipage}
		\right\}.
	\]

Then it follows by an easy calculation that $g(U) \subset U$
(cf.~\cite[Proof of Lemma 21]{KS14}).
One can see that
	\[
	U\cap \A^N(K_v)= \left\{ (x_1,x_2, \ldots x_N) \in \A^N(K_v) \setmid
	\begin{minipage}{8em}
		\begin{align*}
			\|x_1\|_v &> 1,\\
			\|x_1\|_v &> \|x_i\|_v (2\leq i \leq N)
		\end{align*}
	\end{minipage}
	\right\},
	\]
so $\lambda_v(P)= \log \| x_1 \|_v >0$ for any $P \in U\cap \A^N(K_v)$.

Now we have
	\[
	\lambda_v(g(P))= \delta_g \lambda_v(P) \text{ for }P\in U \cap \A^N(K_v)
	\]
(cf.~\cite[Proof of Lemma 21]{KS14}).
So
$\underline{\lambda}_{g,v}(P) =\lambda_v(P)>0$ for  any $P \in U\cap \A^N(K_v)$.
Then it follows that $\alpha_g(P)=\delta_g$ for any $P \in U \cap \A^N(K)$
(cf.~Remark \ref{rem_eq}).
Since $\alpha_g(P)=\alpha_f(P)^m$ and $\delta_g=\delta_f^m$,
we have $\alpha_f(P)=\delta_f$ for any $P \in U \cap \A^N(K)$.
\end{step}

\begin{step}
According to Lemma \ref{lem_key} and Remark \ref{rem_eq},
it is sufficient to prove the following claim, where we set 
\begin{itemize}
\item $x \in \A^N(K)$ satisfies (P) if $x \in U$.
\item (Q) is unconditional.
\end{itemize}

\begin{claim}
Given finitely many points $P_1,\ldots,P_k \in \A^N(K)$
and a proper closed
subset $Y \subset \A^N_{\overline K}$,
there exists a point $P \in U \cap \A^N(K)$ such that
$P \not\in Y$ and
$O_f(P) \cap O_f(P_i)=\varnothing$ for $1 \leq i \leq k$.
\end{claim}

\noindent Using Lemma \ref{lem_it}, we may assume $m=1$.
Replacing $P_i$ with the first point of $O_f(P_i)$ that is included in $U$ or
otherwise excluding $P_i$ for each $i$,
we may assume $P_i \in U$ for every $i$.
Fix a uniformizer $\pi_v \in K$ of $R_v$.
Let $l_i = \lambda_v(P_i)/ (-\log \| \pi_v\|_v) \in \Z_{>0} \ (1\leq i \leq k)$.
By Lemma \ref{lem_num2}, there exists $l>1$ such that $l_i/l \not \in d^\Z$.
Consider the set
	\[
	U^{(l)}:= \left\{ (x_1, x_2, \ldots , x_N) \in \A^N(K) \setmid
	\begin{minipage}{8em}
		\begin{align*}
			&\| x_1 \|_v = \| \pi_v\|_v^{-l} \text{ and}\\
			&\| \pi_v\|^{-l}_v > \| x_i \|_v\geq 1 \ (2\leq i \leq N)
		\end{align*}
	\end{minipage}
	\right\}.
	\]
	Note that $U^{(l)} \subset U \cap \A^N(K)$.
The set $U^{(l)}$ is Zariski dense by Proposition \ref{prop_dense},
so there is an element $P \in U^{(l)} \setminus Y$.
Then we have
	\[
	\frac{\lambda_v{(P_i)}}{\lambda_v(P)}
	= \frac{\lambda_v(P_i)}{-\log \| \pi_v\|_v} \cdot \frac{-\log \| \pi_v\|_v}{\lambda_v(P)}
	= \frac{l_i}{l} \not \in \pm d^\Z,
	\]
so $O_f(P) \cap O_f(P_i)=\varnothing$ for $1 \leq i \leq k$.
\end{step}
\end{proof}

\begin{proof}[Proof of Theorem \ref{thm_richness} (a)]
We prove Theorem \ref{thm_richness} (a) along \cite[Proof of Theorem 18 (a)]{KS14}.
Let $d=\deg f$.
We write $f$ in homogeneous coordinates as
	\begin{align*}
		f([X:Y:Z]) =[ &F(X,Y) + Z F_1(X,Y,Z):\\
 			&\hphantom{=} G(X,Y)+ ZG_1(X,Y,Z): Z^d ],
	\end{align*}
where at least one of $F$ and $G$ is nonzero, since $f$ has degree $d$.
Changing coordinates, we may assume $F\neq 0$.

Let $H= \gcd (F,G) \in K[X,Y]$, and write $F=HF_0$ and $G=HG_0$.
Set $V=\{ Z=0 \} =\PP^2_K \setminus \A^2_K$.
Since we have
	\[
	\deg F_0= \deg F- \deg H= d- \deg H= \deg G- \deg H = \deg G_0,
	\]
we have a well-defined map
	\[
	\phi=[F_0,G_0] \colon V \longrightarrow V.
	\]

\vspace{0.1in}

\underline{Case 1}: $\deg F_0 \geq 2.$

	\noindent Since the degre of $\phi$ is at least $2$,
	it has infinitely many periodic points in $V(\overline{K})$.
	Hence there are infinitely many periodic points of $f$ in $V(\overline K)$.
	By taking a finite extension $L$ of $K$, we can get a periodic point $Q_0 \in V(L)$.
	Hence we can apply Proposition \ref{prop_prep}.

	\vspace{0.1in}

\underline{Case 2}: $\deg F_0 =0.$

	\noindent By conjugating $f$ by an automorphism, we may assume that
	$f$ is of the form
		\[
		f([X:Y:Z])=[H(X,Y)+ZF_1(X,Y,Z): ZG_1(X,Y,Z):Z^d].
		\]
	(cf.~the proof of \cite[Theorem 18 (a)]{KS14} for details.)

	\vspace{0.1in}

		\underline{Case 2.a}: $H(1,0) =0.$

		\noindent In this case, any positive power of $f$ is not algebraically stable (cf.~ the proof of
		 \cite[Theorem 18 (a)]{KS14}),
		 which cannot happen.

		 \vspace{0.1in}

		\underline{Case 2.b}: $H(1,0) \neq 0.$

		 \noindent The map $f$ is defined at $[1:0:0]$,
		 and $[1:0:0]$ is a fixed point of $f$,
		 so we can apply Proposition \ref{prop_prep}.

\vspace{0.1in}

\underline{Case 3}: $\deg F_0 =1$.

	\noindent In this case, after a change of coordinates of $\PP^2$ mapping the line at infinitiy to itself,
	the map $\phi$ may be put into one of the following two forms:
	\[
	\phi(X,Y)=[aX:Y] \text{ or } \phi(X,Y) = [X+bY: Y] \text{ with }a,b \in \overline{K}.
	\]

	Let $v$ be a non-alchimedean place of $K$ such that
	the nonzero coordinates of $H, F_0,F_1,G_0,$ and $G_1$ have $v$-adic absolute value $1$.
	Then consider the $v$-adic open set
	\[
	U= \left\{
		P=[x:y:z]\in \PP^2(K_v) \mid
		|x|_v > |y|_v > |z|_v\text{ and }
		|y|_v^d > |x|_v^{d-1} |z|_v
		\right\}.
	\]
	For a point $P=[\alpha: \beta:1] \in U \cap \A^2(K)$,
	we can write $|\beta|_v = R$, $|\alpha|_v =RS$
	with $R>1$, $S>1$, and  $R>S^{d-1}$
	by the condition of $U$.
	Let	$f(P)=[\alpha ' : \beta ': 1]$.

	\vspace{0.1in}

		\underline{Case 3.a}: $\phi =[aX:Y]$.

		\noindent In the proof of \cite[Theorem 3]{KS14},
		it is showed that $f(U) \subset U$,
		$|\alpha'|_v = R^d S^{k+1}$, and
		$|\beta'|_v = R^d S^k$,
		where $k$ is the smallest integer such that
		the coefficient of $X^kY^{d-1-k}$ of $H$ is non-zero.
		The last two equalities imply
		$\alpha_f(P)= d$
		for all $P\in U \cap \A^2(K) $.

According to Lemma \ref{lem_key}, it is sufficient to show the following claim,
where we set
\begin{itemize}
\item $x \in \A^2(K)$ satisfies (P) if $x \in U$.
\item (Q) is same as (P).
\end{itemize}

\begin{claim}
Gven finitely many points $P_1,\ldots,P_k \in  U \cap \A^2(K)$
and a proper closed
subset $Y \subset \A^2_{\overline K}$,
there exists a point $P \in U \cap \A^2(K)$ such that
$P \not\in Y$, and
$O_f(P) \cap O_f(P_i)=\varnothing$ for $1 \leq i \leq k$.
\end{claim}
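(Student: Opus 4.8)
Here is how I would prove the claim in Case~3.a.

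The plan is to follow the pattern of Proposition~\ref{prop_prep}: attach to each relevant $K$-point of $U$ an $f$-invariant $v$-adic quantity, then pick a point whose invariant is not shared by any of the $P_i$ (which forces orbit-disjointness) and arrange that the locus of such points is Zariski dense, so that it escapes $Y$. The difference from Proposition~\ref{prop_prep} is that here $U$ actually carries such an invariant, so no analogue of Lemma~\ref{lem_num2} is needed.

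The key observation is that the quantity $S$ is $f$-invariant along $U$. Precisely, for $P=[\alpha:\beta:1]\in U\cap\A^2(K)$ put $S(P)=\|\alpha\|_v/\|\beta\|_v$; by the defining inequalities of $U$ we have $S(P)>1$, and from the equalities $\|\alpha'\|_v=R^dS^{k+1}$, $\|\beta'\|_v=R^dS^k$ recalled above we get $S(f(P))=\|\alpha'\|_v/\|\beta'\|_v=S(P)$. Since $f(U)\subset U$, iterating gives $S(f^n(P))=S(P)$ for every $n\geq0$ and every $P\in U\cap\A^2(K)$. Consequently, if $O_f(P)\cap O_f(P_i)\neq\varnothing$, say $f^n(P)=f^m(P_i)$, then $S(P)=S(f^n(P))=S(f^m(P_i))=S(P_i)$. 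Hence it suffices to produce a point $P\in U\cap\A^2(K)$ with $P\notin Y$ and $S(P)\notin\{S(P_1),\dots,S(P_k)\}$; such a $P$ automatically satisfies $\alpha_f(P)=\delta_f$ because it lies in $U$, so this also supplies condition (P) for Lemma~\ref{lem_key}.

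To build such a $P$ I would invoke Proposition~\ref{prop_dense}. Fix the uniformizer $\pi_v$ and set $q=\|\pi_v\|_v^{-1}>1$, so the $v$-adic absolute value of any element of $K^\times$ is a power of $q$; in particular each $S(P_i)$ is a power of $q$. Since $\{S(P_1),\dots,S(P_k)\}$ is finite, choose $s\in\Z_{>0}$ with $q^s\notin\{S(P_1),\dots,S(P_k)\}$ and then $r\in\Z_{>0}$ with $r>s(d-1)$. With $\mathbf a=(-(r+s),-r)$ one checks that $U_{\mathbf a}\subset U\cap\A^2(K)$: for a point with $\|x_1\|_v=q^{r+s}$, $\|x_2\|_v=q^r$ and $\|x_3\|_v=1$, the inequalities defining $U$ reduce to $r+s>r$, $r>0$, and $rd>(r+s)(d-1)$, all of which hold. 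By Proposition~\ref{prop_dense}, $U_{\mathbf a}$ is Zariski dense in $\A^2_{\overline K}$, so we may take $P\in U_{\mathbf a}\setminus Y$. Then $S(P)=q^{r+s}/q^r=q^s$ avoids every $S(P_i)$, so by the previous paragraph $O_f(P)\cap O_f(P_i)=\varnothing$ for all $i$, which proves the claim. Lemma~\ref{lem_key} then yields the conclusion of Case~3.a, and the remaining subcase $\phi=[X+bY:Y]$ is handled by the same method with the corresponding invariant, completing the proof of Theorem~\ref{thm_richness}(a).

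The only genuinely non-formal point is the compatibility in the last step: one must simultaneously realize a prescribed $S$-value and keep the prescribed-valuation locus $U_{\mathbf a}$ inside $U$. This is exactly the small computation with powers of $q$ above, and it goes through because $U$ is cut out by inequalities that leave two independent valuation parameters $r,s$ subject only to the single constraint $r>s(d-1)$; everything else is forced by the $f$-invariance of $S$ together with Proposition~\ref{prop_dense}.
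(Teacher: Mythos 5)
Your proof is correct and follows essentially the same route as the paper: you exploit the $f$-invariance of the ratio $S(P)=\|\alpha/\beta\|_v$ on $U$ (which the paper phrases as ``$|\alpha'/\beta'|_v$ is constant on forward orbits''), choose a value of $S$ in the value group $q^{\Z}$ distinct from the finitely many $S(P_i)$, and use Proposition~\ref{prop_dense} to find such a point outside $Y$, with the same compatibility constraint $R>S^{d-1}$ (your $r>s(d-1)$). Your write-up is in fact slightly more explicit than the paper's at the step where the prescribed-valuation locus is checked to lie inside $U$, but there is no substantive difference.
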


		\noindent
		Write $P_i=(\alpha_i, \beta_i)$,
		$|\alpha_i|_v = R_i S_i$ and
		$|\beta_i|_v = R_i$ with
		$R_i > 1, S_i>1, \text{ and } R_i>S_i^{d-1}$.
		Now let $S>1$ be a value of $|\cdot|_v$
		which is different from $S_i\ (1 \leq\ i \leq k)$.
		Fix a value $R>1$ of $|\cdot|_v$ satisfying
		$R > S^{d-1}$.
		Now pick a point $P=(\alpha,\beta) \in U \cap \A^2(K)$
		such that
		$|\alpha|_v = RS$,
		$|\beta|_v = S$, and
		$P \not\in Y$.
		Note that such a point exists by Proposition \ref{prop_dense}.
		Now note that the value $|\alpha'/\beta'|_v$ is constant
		for any point $(\alpha',\beta')$ on the forward $f$-orbit of a point of $U$.
		So we have $O_f(P) \cap O_f(P_i)=\varnothing$ for $1 \leq i \leq k$.

		\vspace{0.1in}

		\underline{Case 3.b}: $\phi =[X+bY:Y]$.

		\noindent In this case, the equality
		$|\beta'|_v = |\beta|_v^d$
		is proved in the proof of \cite[Theorem 3 (a)]{KS14}.
		This allows us to show the assertion
		in a similar way as the proof of Proposition \ref{prop_prep}.
\end{proof}

\subsection{Quadratic case}
\begin{proof}[Proof of Theorem \ref{thm_richness} $(b)$]
	By the result of \cite{Gue04}, algebraically non-stable quadratic map
	$f:\A^2_K \to \A^2_K$ is conjugated by a $\overline{K}$-linear automorphism $\phi$ of $\A^2_{\overline K}$ to one of the following maps:
	\begin{parts}
		\Part{Case 1.1.} $f(x,y)=(y+c_1,xy+c_2)$ with $c_1,c_2 \in \overline{K}$,
		\Part{Case 3.1.} $f(x,y)=(y, x^2+ ax+c)$ with $a,c \in \overline{K}$, or
		\Part{Case 3.2.} $f(x,y)=(ay+c_1,x(x-y)+c_2)$ with $a\in \overline{K}^{\ast}, c_1,c_2 \in \overline{K}.$
	\end{parts}
	We prove the assertion by referring to the calculations in the proof of \cite[Theorem 18 $(b)$]{KS14} and case by case analyses.
	Replacing the base field $K$ by a finite extension,
	we may assume that $\phi$ and the resulting morphism $f$ are defined over $K$.
	Take $v \in M_K$ such that all coefficients of $f$ are in $R_v^*$.

	\vspace{0.1in}

		\underline{Case 1.1}: $f(x,y)=(y+c_1,xy+c_2)$ with $c_1,c_2 \in K$.

			\noindent For $P=(x,y)\in \A^2(K)$ with $|x|_v= |y|_v> 1$.
			In the case $1.1$ of the proof of \cite[Theorem 18 $(b)$]{KS14},
			it is proved that
			\[
			 \underline{\lambda}_{f,v}(P)=
			 \frac{\delta_f^2 \log |x|_v}{\sqrt{5}} >0.
			\]
			Hence the same argument as in Step $2$ of the proof of Proposition \ref{prop_prep} works.

			\vspace{0.1in}

		\underline{Case 3.1}: $f(x,y)=(y, x^2+ ax+c)$ with $a,c \in K$.

			\noindent In this case, $f^2(x,y)=(x^2+ax+c,y^2+ay+c)$ is algebraically stable. So it is reduced to Theorem \ref{thm_richness} (a).

			\vspace{0.1in}

		\underline{Case 3.2}: $f(x,y)=(ay+c_1,x(x-y)+c_2)$ with $a\in K^{\ast}, c_1,c_2 \in K.$

			\noindent For $P=(x,y) \in \A^2(K)$ with $1< |x|_v < |y|_v$.
			In the case $3.2$ of the proof of \cite[Theorem 18 $(b)$]{KS14},
			it is proved that
			\[
			 \underline{\lambda}_{f,v}(P)=
			 \frac{\delta_f^2 \log |y|_v}{\sqrt{5}} >0.
			\]
			Hence the same argument as the Step $2$ in the proof of Proposition \ref{prop_prep} works.

\end{proof}

\end{document}